\newcommand{\ueps}{u_{\epsilon}}
\newcommand{\uepsin}{u_{\epsilon}^{\rm{in}}}
\newcommand{\tuepsin}{\tilde{u}_{\epsilon}^{\rm{in}}}
\newcommand{\uin}{u^{\rm{in}}}
\newcommand{\repsin}{R_{\epsilon}^{\rm{in}}}
\newcommand{\trepsin}{\tilde{R}_{\epsilon}^{\rm{in}}}
\newcommand{\rin}{R^{\rm{in}}}
\newcommand{\seps}{S_{\epsilon}}
\newcommand{\ujeps}{u^j_{\epsilon}}
\newcommand{\rjeps}{R^j_{\epsilon}}
\newcommand{\ubteps}{\overline{U}_{\dt}^{\epsilon}}
\newcommand{\uhteps}{\widehat{U}_{\dt}^{\epsilon}}
\newcommand{\rbteps}{\overline{R}_{\dt}^{\epsilon}}
\newcommand{\rhteps}{\widehat{R}_{\dt}^{\epsilon}}
\newcommand{\jbteps}{\overline{J}_{\dt}^{\epsilon}}
\newcommand{\jhteps}{\widehat{J}_{\dt}^{\epsilon}}
\newcommand{\wbteps}{\overline{W}_{\dt}^{\epsilon}}
\newcommand{\whteps}{\widehat{W}_{\dt}^{\epsilon}}
\newcommand{\gbteps}{G_{\dt}^{\epsilon}}
\newcommand{\sbteps}{S_{\dt}^{\epsilon}}
\newcommand{\fbteps}{F_{\dt}^{\epsilon}}
\newcommand{\dbteps}{D_{\dt}^{\epsilon}}
\newcommand{\vbteps}{V_{\dt}^{\epsilon}}
\newcommand{\ujoeps}{u^{j-1}_{\epsilon}}
\newcommand{\rjoeps}{R^{j-1}_{\epsilon}}
\newcommand{\jjeps}{J^j_{\epsilon}}
\newcommand{\jjoeps}{J^{j-1}_{\epsilon}}
\newcommand{\feps}{F_{\epsilon}}
\newcommand{\fieps}{F_{\epsilon}^{-1}}
\newcommand{\fiteps}{F_{\epsilon}^{-T}}
\renewcommand{\oe}{\Omega_{\epsilon}}
\newcommand{\tueps}{\tilde{u}_{\epsilon}}
\newcommand{\treps}{\tilde{R}_{\epsilon}}
\newcommand{\jeps}{J_{\epsilon}}
\newcommand{\veps}{V_{\epsilon}}
\newcommand{\deps}{D_{\epsilon}}
\newcommand{\oet}{\Omega_{\epsilon}(t)}
\newcommand{\get}{\Gamma_{\epsilon}(t)}
\newcommand{\geps}{\Gamma_{\epsilon}}
\newcommand{\reps}{R_{\epsilon}}
\newcommand{\m}{\mathfrak{m}}
\newcommand{\peps}{\phi_{\epsilon}}
\newcommand{\R}{\mathbb{R}}
\newcommand{\N}{\mathbb{N}}
\newcommand{\Z}{\mathbb{Z}}
\newcommand{\oR}{\overline{R}}
\newcommand{\uR}{\underline{R}}
\newcommand{\ie}{i.\,e.,\,}
\newcommand{\fxe}{\frac{x}{\epsilon}}
\newcommand{\dt}{\Delta t}
\newcommand{\kl}{<}
\newcommand{\gr}{>}
\newcommand{\weps}{w_{\epsilon}}
\newcommand{\dth}{\partial_t^h}
\newcommand{\J}{\bar{J}}
\newcommand{\per}{\mathrm{per}}
\newcommand{\heps}{h_{\epsilon}}
\newcommand{\tweps}{\widetilde{w}_{\epsilon}}
\newcommand{\feg}{\frac{1}{\vert \epsilon \Gamma \vert}}
\newcommand{\bfxe}{\left[\fxe\right]}
\newcommand{\teps}{\mathcal{T}_{\epsilon}}
\newcommand*\Bell{\ensuremath{\boldsymbol\ell}}
\newcommand{\Roe}{\Omega_{\epsilon}}
\newtheorem{definition}{Definition}
\newtheorem{remark}{Remark}
\newtheorem{theorem}{Theorem}
\newtheorem{proposition}{Proposition}
\newtheorem{lemma}{Lemma}
\newtheorem{corollary}{Corollary}
\title{Homogenization of a mineral dissolution and precipitation model involving free boundaries at the micro scale}
\author{Markus Gahn and Iuliu Sorin Pop}
\begin{document}

\maketitle

\begin{abstract}
In this work we present the homogenization of a reaction-diffusion model that includes an evolving microstructure. Such type of problems model, for example, mineral dissolution and precipitation in a porous medium. Hence, we are dealing with a multi-scale problem with free boundaries on the pore scale. In the initial state the microscopic geometry is given by a periodically perforated domain, including spherical solid grains. The radius of each grain is of order $\epsilon$ and  depends on the unknown (the solute concentration) at its surface. Therefore the radii of the grains change in time, leading to a nonlinear, free boundary problem. In a first step, we transform the evolving micro domain to a fixed, periodically domain. Using the Rothe-method we prove the existence of a weak solution and obtain \textit{a priori} estimates that are uniform with respect to $\epsilon$. Finally, letting $\epsilon \to 0$, we derive a macroscopic model, the solution of which approximates the micro-scale solution. For this, we use the method of two-scale convergence, and obtain strong compactness results enabling to pass to the limit in the nonlinear terms. 

\end{abstract}

\section{Introduction}

In this paper, we consider a reaction-diffusion model defined in a perforated domain having a heterogeneous microstructure. Due to the reactions taking place at the boundaries of the perforations, these perforations may evolve. This evolution is not known \textit{a priori}, but depends on the solution of the problem, and therefore the model involves free boundaries at the micro scale. The initial state of the domain is isomorphic to a periodically perforated domain, where the periodicity is of order $\epsilon$, with $\epsilon$ being a small, positive scale separation parameter. The aim of this work is to establish existence of a weak solution with $\epsilon$-uniform \textit{a priori} estimates, and  to derive the corresponding macroscopic model. This is obtained by homogenization methods, after passing to the limit $\epsilon \to 0$. The macroscopic model is derived rigorously, its solution being an approximation of the solution to the microscopic model.

Reactive transport in evolving porous media occur in a variety of real-life applications. We mention here mineral precipitation and dissolution \cite{Bringedal2014, Noorden2009, Schulzetal2016}, biofilm growth \cite{SchulzKnabner}, colloid deposition \cite{eden2021multiscale}, or water diffusion into absorbent particles \cite{FasanoMikelic, SWEIJEN2017407}. A typical example is the precipitation and dissolution taking place in a porous medium, consisting of alternating solid grains and voids (the pore space). We assume that the void space is completely filled by a fluid, say, water, which is stationary. Soluble species can diffuse inside the fluid, and can precipitate at the fluid-solid interface to form a solid layer (e.g. salt). The reverse process of dissolution is also possible, by which solute species is released back to the fluid. Assuming that, in general, the precipitate layer has a thickness that is comparable to the typical pore size, when defining the micro-domain one has to exclude the grains and (if present) the precipitate layer. In other words, the boundary of the micro-domain wherein the problem is defined has two parts: the outer part of the boundary, where the entire medium is embedded into, and the inner boundary, defined as the solid-fluid interface. 
One cannot assume that the micro-domain remains fixed in time, as the thickness of the precipitate layer depends on the species concentration, which, itself, is an unknown in the model. Hence, the precipitation and dissolution processes lead to a variable pore space, and the fluid-solid interface is a free boundary. 

In a simplifying setting, we assume here that the porous medium includes spherical solid grains, having a radius of order $\epsilon$, and being periodically-distributed. Furthermore, we assume that the evolution of this grain is radially symmetric, therefore its evolution is well described by the radius of the resulting solid.  In this way, the free boundary is reduced to a one-dimensional equation for the radius. Nevertheless, this still leads to a strongly nonlinear problem, defined in time-dependent domains with \textit{a priori} unknown evolution.

In what follows let $T >0$ stand for a maximal time and consider a heterogeneous medium $\Omega \in \R^d$ ($d \in \{2, 3\}$), consisting of regions occupied by a stationary fluid (the pore space) and of small, solid regions of spherical shape (the grains with the attached mineral layers). These centres of the solid regions are distributed periodically in a $d$-dimensional (hyper)cube structure. The distance between two successive centres is of $\epsilon$ order (the micro-scale length). In this way, the pore space depends on both $\epsilon$ and on the time $t$, and for given a $t \in [0, T]$ we denote the pore space by $\oet$. Within $\oet$, we consider a reaction-diffusion equation, with the solute concentration $\ueps$ as unknown quantity. For the ease of presentation, and since the aim here is to provide mathematically rigorous derivation of the macroscopic model, only one solute species is considered. In the case of two species, the model can be reduced to the situation here by considering a decoupled component of the model, the total electric charge (see, e.g. \cite{KnabnerDuijn,DuijnPop}). 

Recalling the simplifying assumptions made above, 
the solid phase consists of spherical regions, including the original solid part of the medium, and the radially symmetric mineral layer. These solids are characterised by a radius $\reps$, which is grain-dependent and also changes in time. More precisely, it is obtained as the solution of an ordinary differential equation, depending in a nonlinear way on the (averaged) solute concentration at the surface of the grain and the radius itself. In this way, the spatial variable $x$ enters in the equation as a parameter, and varies for every microscopic cell. Such a structure can also be found in the models discussed in \cite{eden2021multiscale, FasanoMikelic, Schulzetal2016, SWEIJEN2017407}, but for different types of applications and not in the context of homogenization. 

As already mentioned, the goal of this contribution is to provide a mathematically rigorous derivation of the macroscopic model approximating the microscopic precipitation-dissolution model defined in evolving microscopic geometries. In doing so, two aspects are essential. First, we transform the problem defined in the evolving micro-domain $\oet$ into one problem defined in a fixed, periodically perforated domain $\oe$. In doing so, we employ the Hanzawa transformation \cite{Hanzawa81} (we also refer to \cite{PruessSimonettBook16} for an overview of this topic). This leads to a change in the coefficients of the equations, which now depend on the radius of the grains and therefore on the unknown concentration. We prove the existence of a solution pair ($\ueps, \reps$) by using the Rothe-method, and derive \textit{a priori} estimates that are uniform with respect to the parameter $\epsilon$. 

Secondly, for the derivation of the macroscopic model, which is obtained in the limit along a sequence $\epsilon \to 0$, we use the method of two-scale convergence. To pass to the limit in the nonlinear terms, we need strong two-scale convergence results for the concentration $\ueps$, as well as for the coefficients depending on the radius $\reps$  and its time derivative $\partial_t \reps$. For the latter, this is obtained as a consequence of the convergence of the radius function $\reps$, for which a Kolmogorov-type compactness argument is used. As will follow from below, due to the nonlinear character of the problem and because of the coefficients for the time-derivative appearing in the transformed model, we cannot obtain $\epsilon$-uniform \textit{a priori} estimates for $\partial_t \ueps$, what would guarantee the strong two-scale convergence of $\ueps$, see \cite{Gahn, MeirmanovZimin}. We therefore apply an alternative strategy, namely to control the product $\jeps \ueps$, where $\jeps $ denotes the Jacobi determinant of the Hanzawa transformation, and to solve an auxiliary approximation problem to establish the strong convergence of $\ueps$.  Finally, for the strong convergence of $\partial_t \reps $ and to identify the limit equation for the radius, we prove a two-scale compactness result for averaged functions on the oscillating surface of the micro cells. With these convergence results, we are able to pass to the limit in the microscopic model. 

The outcome is a macroscopic model consisting of a reaction-diffusion equation for the macroscopic concentration, defined in the entire $\Omega$, coupled with an ordinary differential equation for the macroscopic radius depending on the macroscopic concentration. The effective parameters like the diffusion or the porosity are obtained by solving cell problems formulated in evolving reference cells, and accounting for the microscopic evolution of the porous medium. The resulting is therefore a strongly coupled, two-scale mathematical model, defined in the entire domain $\Omega$ that does not depend on time. 

The analysis and homogenization of reaction-diffusion models defined in evolving microscopic geometries have been addressed in several publications. Various strategies have been adopted, depending also on the particular geometry considered there. The simplest situation appears in one spatial dimension, or if the pore space of a porous medium is a long but thin strip or tube. Then the evolution of the solid-fluid interface can be described by a free boundary function. In this sense we refer to \cite{vNoordenPop} for the one-dimensional case, where the existence and uniqueness of a solution for a precipitation-dissolution model involving multi-valued dissolution rates is proved, using the fixed-domain transformation proposed in  \cite{Fliert}
. In the multi-dimensional case, we mention \cite{vNoorden, KumarNoordenPop}, where effective models are derived formally by transversal averaging. 

For more general situations, as considered here, level set methods have been employed to describe the evolving microscopic geometry. We refer to \cite{Noorden2009, Bringedal2014, BringedalKumar, Garttner, Schulzetal2016}, where dissolution and precipitation processes are modelled, and to \cite{SchulzKnabner} for a similar approach in modelling biofilm growth in a porous medium. However, the rigorous homogenization is missing. Without entering into details, we mention that one can consider as an alternative to the free boundaries the phase-field approach. Then, the fluid-solid interface is approximated by a narrow diffuse-interface layer. 

Whereas a rich literature exists on the rigorous homogenization of reactive transport processes with nonlinear reaction terms in the bulk domain and the microscopic surface for a complex, but fixed microstructure, see for example \cite{DonatoNguyen_HomogenizationOfDiffusion, Gahn, GahnNeussRadu2017SubstrateChanneling,  Gahn2020b,MielkeReicheltThomas2014}, the results for an evolving microstructure are scarce. Moreover, in most of the papers dealing with such cases, the microstructural evolution  is assumed known \textit{a priori}. In this sense, we mention \cite{Peter07, Peter09}, where homogenized models are derived rigorously for  linear reaction-diffusion-advection problems (also see \cite{Fotouhi}). 
Close to the present work is \cite{GahnNRP}, where the reaction and adsorption/desorption terms are nonlinear, without requiring a radially symmetric microscopic evolution, and the diffusion is low, leading to a different scaling for the gradient of the concentration; however, the evolution is assumed known.
Some general two-scale results for  transformations of locally periodic domains are treated in \cite{Wiedemann}.

In all these works, the models are defined in perforated domains, which resembles well the structure of a porous medium with variable microstructure, but the microscopic evolution is known \textit{a priori}. For the analysis of free boundary models with radially symmetric evolution of the interface we refer to \cite{FasanoMikelic}. There, the existence of a solution is proved for a system involving a nonlinear parabolic problem with radially symmetric perforations, in which the rate of change of the radius of these perforations depends linearly on the saturation. Also, the analysis does not include any homogenization results. Also, the two-scale system considered in \cite{eden2021multiscale} is the homogenized counterpart of a microscopic problem that is similar to the one considered here, but for linear adsorption/desorption rates. The existence of a solution is obtained for sufficiently small times. We also mention \cite{FriedmanHu}, where rigorous upscaling results are proved for the Laplace and the heat equations posed in a domain with a rough/rapidly oscillating boundary. This rough boundary is also a free boundary, as its normal velocity depends (linearly) on the solution, with a time-dependent rate.

Our paper is organized as follows: In Section \ref{SectionTheMicroscopicModel} we formulate the microscopic model on the evolving domain $\oet$ and transform it to the fixed domain $\oe$. In Section \ref{SectionExistence} we prove existence for the microscopic model and establish \textit{a priori} estimates depending explicitly on $\epsilon$. In Section \ref{SectionDerivationMacroModel} we show the two-scale convergence results for the microscopic solution and derive the macroscopic model. A conclusion is given in Section \ref{SectionConclusion}. Some elemental calculations concerning the Hanzawa transformation can be found in die Appendix \ref{SectionAppendixHanzawaReferenceElement}.

\section{The microscopic model}
\label{SectionTheMicroscopicModel}

In what follows $T > 0$ is a finite time, $\epsilon>0$ denotes a small scale separation parameter s.t. $\epsilon^{-1} \in \N$ ($n \in \N_0$ being the spatial dimension), and ${ \Bell} = (\ell_1, \dots, \ell_n) \in \N^n$ is an $n$-tuple of strictly positive natural numbers. Let $\Omega = ({\bf 0},{\Bell}) = (0, \ell_1) \times \dots \times (0, \ell_n)$ and $Y:=(0,1)^n$. The center of $Y$ is denoted by $\m:= \frac12 (1,\ldots,1)\in \R^n$. For the set of $n$-tuples $K_{\epsilon}:= \{ {\bf k}\in \Z^n \, : \, \epsilon (Y+{\bf k}) \subset \Omega\}$ one has 
\begin{equation}\label{eq:K}
\overline{\Omega} = \bigcup_{{\bf k} \in K_{\epsilon}} \epsilon \big(\overline{Y} + {\bf k} \big).
\end{equation}

In other words, we decompose $\overline{\Omega}$ into a set of microscopic ($\epsilon$-sized) hypercubes, two different ones sharing at most one side. Each $x \in \overline{\Omega}$ is contained in a microscopic hypercube having the center 
\begin{equation}\label{eq:m}
	\m_{\epsilon,x} := \epsilon \left(\left[\fxe\right] + \m \right).
\end{equation}

To introduce the evolving geometry we first define $\uR, \oR$ representing the minimal, respective maximal admissible radius of an $n$-dimensional sphere contained in a microscopic hypercube, and satisfying 
\begin{equation} \label{eq:Ru}
	0< \uR < \oR < \frac12.
\end{equation}
Then, for any $t \in [0, T]$ and $x \in \overline{\Omega}$ we associate a radius 
\begin{equation}\label{eq:r}
	r_{\epsilon,t,x} := \reps\left(t,\m_{\epsilon,x}\right),
\end{equation}
where the function $\reps$ will be a solution component of the problem stated below. 

Further, for any $x \in \overline{\Omega}$ a unique ${\bf k} \in K_{\epsilon}$ exists s.t. $x \in \epsilon(Y + {\bf k})$, namely ${\bf k} = \bfxe$. Then, for any $x \in \overline{\Omega}$, with ${\bf k}$ as before  and $t \in [0, T]$, we consider the $n$-dimensional ball, respectively sphere 
\begin{align*}
	B_{\epsilon,{\bf k}}(t):= B_{\reps(t,x)}(\epsilon(\m + {\bf k})), \qquad \geps^{\bf k}(t) :=  \geps\left(t,\epsilon \bfxe\right):= \partial B_{\epsilon, {\bf k}}(t) ,
\end{align*}
both having the radius $r_{\epsilon,t,x} \in (\epsilon \uR, \epsilon \oR) \subset \left(0,\dfrac{\epsilon}{2}\right)$. With this, at any time $t$, the evolving microscopic domain and its inner part of the (freely-moving) boundary are defined as 
\begin{align}
	\label{MicroscopicModel_EvolutionDomain}
	\oe(t):= \Omega \setminus \bigcup_{{\bf k} \in K_{\epsilon}} \overline{B_{\epsilon,{\bf k}}(t)}, \text{ respectively } \get:= \bigcup_{{\bf k} \in K_{\epsilon}} \geps^{\bf k}(t).
\end{align}
Observe that the outer boundary $\partial \Omega$ is fixed. Finally, we define the sets 
\begin{equation}\label{eq:QH}
Q_{\epsilon}^T := \bigcup_{t\in (0,T)} \{t\} \times \oet, \quad 
\text{ and } \quad 
H_{\epsilon}^T := \bigcup_{t \in (0,T)} \{t\} \times \get.
\end{equation}
 We consider the following microscopic problem that is motivated by microscopic models for  precipitation-dissolution processes in a porous medium (see e.g. \cite{vNoordenPop, vNoorden,Noorden2009}).    \\[0.5em]
 {\bf Problem P$_\epsilon$}. Find $\ueps, \reps: \bar{Q}_{\epsilon}^T \rightarrow \R$ solving 
\begin{subequations}\label{MicroscopicModel}
\begin{align}
\partial_t \ueps - \nabla \cdot \big(D\nabla \ueps \big) &= f &\mbox{ in }& Q_{\epsilon}^T,
\\
-D\nabla \ueps \cdot \nu_{\epsilon} &= \partial_t \reps (\ueps - \rho) &\mbox{ on }& H_{\epsilon}^T,
\\
-D\nabla \ueps \cdot \nu_{\epsilon} &= 0 &\mbox{ on }& (0,T)\times \partial \Omega,
\\
\label{MicroscopicModel_ODE}
\partial_t \reps &= G_{\epsilon}(t,x,\ueps , \reps) &\mbox{ in }& Q_{\epsilon}^T,
\\
\label{MicroscopicModel_ICu}
\ueps(0) &= \uepsin &\mbox{ in }& \Omega_{\epsilon}(0),
\\
\label{MicroscopicModel_ICR}
\reps(0) &= \repsin &\mbox{ in }& \Omega_{\epsilon}(0).
\end{align}
Here, $D$ is a diffusion tensor (its properties being mentioned in Assumption \ref{AssumptionDiffusion} below) and $\rho > 0$ is a constant representing the molar density of the precipitate. The unknowns $\ueps$ and $\reps$ represent the concentration of the solute species, respectively the thickness of the precipitate layer around a grain centered at $\epsilon \bfxe$. The vector $\nu_{\epsilon}$ is the unit normal to $\get$ outwards $\oet$, while $\uepsin$ and $\repsin$ are given initial conditions (see Assumptions \ref{AssumptionInitialValueUeps} and \ref{AssumptionInitialValueReps}). The function $G_{\epsilon}$ appearing in \eqref{MicroscopicModel_ODE} is defined as 
\begin{equation}\label{eq:G}
	G_{\epsilon}(t,x,\ueps,\reps) = \frac{\epsilon}{\left|\geps\left(t,\epsilon \bfxe\right)\right|} \int_{\geps\left(t,\epsilon \bfxe\right)} g\left(\ueps(t,z),\frac{\reps(t,z)}{\epsilon}\right) dz .
\end{equation}
\end{subequations}
Observe that the domain $\oe(t)$ depends on the unknown function $\reps$, and therefore this problem involves a (microscopic) free boundary. 
Further, for any $t \in (0, T)$, $G_{\epsilon}(t, \cdot ,\ueps,\reps)$ is constant inside microscopic cells, i.e. inside any $\epsilon (Y  + {\bf k})$ with ${\bf k} \in K_{\epsilon}$. The perforation of the microscopic cell evolves in a radially symmetric way with radius $\reps$. From the application point of view, this is a simplified setup, in the sense that we assume that the precipitate layer around a grain has a uniform,  but unknown thickness.  This is guaranteed by \eqref{MicroscopicModel_ODE}.  
Based on Assumption \ref{AssumptionRbounded}, we will prove that $\reps$ remains between two values $\epsilon \uR$ and $\epsilon \oR$, so  $\geps\left(t,\epsilon \bfxe\right)$  can never touch the boundary of the cell, or  degenerate into a point. 

\begin{remark}
	\label{RemarkRepsStepFunction}
	Observe that \eqref{MicroscopicModel_ODE} is stated for all $t \in (0, T)$ and $x \in \oe(t)$, which implies that $\reps $ is defined in the entire $\oe(t)$. In fact, $\reps$ is constant inside each microscopic cell $Y_{\epsilon,\bf k}(t):=\oe(t) \cap \epsilon(Y  + {\bf k})$, with ${\bf k} \in K_{\epsilon}$. This follows from \eqref{MicroscopicModel_ODE} and the definition of $G_{\epsilon}$ in \eqref{eq:G}. Hence, in the following we extend the function $\reps$ constantly from the perforated micro cell $Y_{\epsilon,\bf k}(t)$ to the whole micro cell $\epsilon(Y + k)$.  Further, as we will see later, the micro cells $Y_{\epsilon,\bf k}(t)$ are included in $\Omega$. Especially, we have $\partial \oe(t) \setminus \geps(t)  = \partial \Omega$, i.e., the outer boundary is fixed and $\reps$ is defined on the whole fixed domain $\Omega$.
	%
	%
\end{remark}

For the functional spaces used below the notations are standard. We only mention that $L^\infty(\Omega,C^{0,1}\left([0,T],\big[\epsilon \uR,\epsilon \oR\big]\right))$ is the set of functions that are $C^{0,1}$ w.r.t. time f.a.e. $x \in \Omega$, s.t. $\reps$ is   bounded by $\uR$ and $\oR$.
\begin{definition}
\label{DefSolutionMicProblem}
The pair of functions $(\ueps,\reps)$ with $\ueps  \in L^2((0,T),H^1(\oe(t))) $  and $\reps \in L^\infty(\Omega,C^{0,1}\left([0,T],\big[\epsilon \uR,\epsilon \oR\big]\right))$ is a weak solution of Problem P$_\epsilon$ if, for all $\peps \in C^1\big(\overline{Q_{\epsilon}^T}\big)$ with $ \peps(T,\cdot) = 0$, it holds that
\begin{align}
\begin{aligned}
\label{FreeBVP_MicroscopicModel_VariationalEquation}
-\int_0^T \int_{\oet} & \ueps \partial_t\peps dx dt + \int_0^T \int_{\oet} D\nabla \ueps \cdot \nabla \peps dx dt 
\\
&= \int_0^T \int_{\oet} f \peps dx dt - \int_0^T \int_{\get} G_{\epsilon}(\ueps,\reps)  (\ueps - \rho) \peps d\sigma dt
\\
&+ \int_{\oe(0)}\uepsin\peps(0) dx + \int_0^T \int_{\get} \partial_t \seps(t,\seps^{-1}) \cdot \nu \ueps \peps d\sigma dt
, 
\end{aligned}
\end{align}
where $\seps $ is defined in $\eqref{DefinitionSeps}$. Furthermore, the function $\reps$ fulfills $\eqref{MicroscopicModel_ODE}$ and the initial condition  $\eqref{MicroscopicModel_ICR}$ almost everywhere.  
\end{definition}
Observe that, in the definition above, the function $\seps$ describes the evolution of the moving surface $\get$. The evolution of the evolving domain $\oet $ is given by $\eqref{MicroscopicModel_EvolutionDomain}$.

\subsection{Transformation to the fixed domain}

As mentioned in the introduction, the present analysis relies on the idea of transforming Problem P$_\epsilon$ (or its weak form stated in Definition \ref{DefSolutionMicProblem}, defined on the evolving domain $\oet$, to a problem defined in a fixed, reference domain $\oe$.  To this aim, we employ the Hanzawa transform, \cite{Hanzawa81} (see also \cite{PruessSimonettBook16} for an overview of this topic). With $\oR$ introduced above, see \eqref{eq:Ru}, the domain $\oe$ is defined by
\begin{align*}
\oe := \Omega \setminus \bigcup_{{\bf k} \in K_{\epsilon}} \overline{B_{\epsilon \oR} \left(\epsilon ({\bf k} + \m)\right)}. 
\end{align*}
Further, we define $\geps := \partial \oe \setminus \partial \Omega$. In other words, the microscopic domain is obtained as the union of scaled and shifted reference elements
\begin{align*}
Y^{\ast} := Y \setminus \overline{B_{\oR}(\m)} = (0,1)^n \setminus \overline{B_{\oR}(\m)} .
\end{align*}
The boundary of $B_{\oR}(\m)$ is denoted by $\Gamma := \partial B_{\oR}(\m)$.

With this, for any $t \in [0,T]$ we consider the Hanzawa transformation $S_{\epsilon}(t, \cdot): \oe \rightarrow \oet$, defined as 
\begin{align}\label{DefinitionSeps}
\seps(t,x):= x + \left(\reps(t,x) - \epsilon \oR \right) \chi_0 \left(\fxe\right) \nu_0\left(\fxe\right),
\end{align}
 where the functions $\chi_0$ and $\nu_0$ are $Y$-periodic extensions to the entire $\R^n$ of the functions defined in Appendix  \ref{SectionAppendixHanzawaReferenceElement}. 
 Since $\reps $ is constant for every cell $\epsilon (Y + {\bf k})$ with ${\bf k} \in K_{\epsilon}$, and $\chi_0 \in C^{\infty}_{\per}(Y)$, the function $\seps$ is (strongly) differentiable with respect to $x$. According to the calculations in Appendix \ref{SectionAppendixHanzawaReferenceElement}, \begin{align*}
\nabla \seps (t,x) = I_n + \left(\frac{\reps - \epsilon \oR }{\epsilon}\right) \nabla_y \left( \chi_0 \nu_0\right)\left(\fxe\right),
\end{align*}
with $I_n \in \R^{n \times n}$ being the identity matrix, and
\begin{align} \label{DefinitionTilde}
0 \kl \left(\frac{\uR}{\oR}\right)^{n-1} \le \det \left(\nabla \seps (t,x)\right) \le 1 + 2\left(\oR - \uR\right).
\end{align}
These estimates are crucial for the homogenization, as the functional determinant of the transformation $\seps$ is bounded away from 0, uniformly with respect to $\epsilon$.  We emphasize that $\seps$ is defined and smooth on the whole domain $\Omega$, but the Jacobi determinant is not positive anymore.

Now, define the functions 
\begin{equation}
\begin{array}{ll}
\tueps: (0,T)\times \oe \rightarrow \R,  &\; \tueps(t,x):=\ueps (t,\seps(t,x)),
\\[0.5em]
\treps: (0,T) \times \Roe \rightarrow [\epsilon \uR , \epsilon \oR], &\; \treps(t,x) := \reps(t,\seps(t,x)).
\end{array}
\end{equation}
From Remark \ref{RemarkRepsStepFunction} we immediately obtain that, in fact, $\treps$ and $\reps$ coincide. However, to better distinguish the equation on the fixed and the evolving domain, $\treps$ will appear in the solution pair  $(\tueps,\treps)$ for the problem formulated in the fixed domain. 

To simplify the writing, the functions defined below will be used frequently in what follows, 
\begin{equation}\label{DefFepsJeps}
\feps(t,x) := \nabla \seps(t,x), \; 
\jeps(t,x) := \det (\feps(t,x)).
\end{equation}
The inverse and the transpose of the inverse of $\feps$ are  denoted by 
\begin{equation}\label{DefFepsInv}
    \fieps(t,x):= \left(\feps(t,x)\right)^{-1}, \quad \fiteps(t,x):= \left(\fieps(t,x)\right)^T.
\end{equation}
After employing the Hanzawa transformation \eqref{DefinitionSeps}, 
\eqref{FreeBVP_MicroscopicModel_VariationalEquation} can be rewritten in terms of the variables $\tueps$ and $\treps$. 
More precisely, for all $\phi \in H^1(\oe)$ and almost every $t \in (0,T)$ it holds that  
\begin{equation}\label{eq:tildeu}
\begin{array}{l}
\int_{\oe} \partial_t \big(\jeps \tueps\big) \phi dx - \int_{\oe} \tueps \partial_t \jeps \phi dx - \int_{\oe} V_{\epsilon} \cdot \nabla \tueps \phi dx \\[0.5em]
\qquad + \int_{\oe} D_{\epsilon}\nabla \tueps \cdot \nabla \phi dx 
 = \int_{\oe} \jeps f_{\epsilon} \phi dx  - \int_{\geps} \partial_t \treps (\tueps - \rho) \jeps \phi d\sigma ,
\end{array}
\end{equation}
 with $f_{\epsilon}, V_{\epsilon}$ and $D_{\epsilon}$ defined as 
\begin{equation}\label{DefVepsDeps}
\begin{array}{rcl}
f_{\epsilon}(t,x) &:=& f(t,\seps(t,x)),
\\[0.5em]
V_{\epsilon}(t,x) &:=& \jeps(t,x) \fiteps(t,x) \partial_t \seps(t,x),
\\[0.5em]
D_{\epsilon}(t,x) &:=& \jeps(t,x) \fieps D(\seps(t,x)) \fiteps(t,x).
\end{array}
\end{equation}
The evolution of the radius $\treps$ is given by
\begin{align*}
\partial_t \treps(t,x) &= G_{\epsilon}(t,x,\tueps , \treps)= \frac{\epsilon}{|\epsilon \Gamma|} \int_{\geps\left(\epsilon \bfxe\right)} g \left( \tueps (t,z),\frac{\treps(t,z)}{\epsilon}\right) d\sigma,
\end{align*}
where $\geps: \R^n \rightarrow \R^n$ is defined as 
\begin{align}
\label{DefinitionGepsx}
\geps(z):= \epsilon \Gamma + z, \text{ for any } z \in \R^n.
\end{align}
The initial conditions read
\begin{align*}
\tueps(0) &= \uepsin(\seps(0,\cdot)) =: \tuepsin \quad \mbox{ in } \oe,
\\
\treps(0) &= \repsin(\seps(0,\cdot)) =: \trepsin \quad \mbox{ in } \oe.
\end{align*}
As before, one has $\trepsin = \repsin$ since $\repsin$ is constant on every cell $\epsilon (Y + {\bf k})$, ${\bf k} \in K_{\epsilon}$.

In summary, the problem transformed to the fixed domain is \\
{\bf Problem $\tilde{\rm P}_\epsilon$}. Find $\tueps, \treps: [0, T) \times \bar{\Omega}_\epsilon \rightarrow \R$ solving 
\begin{subequations}\label{MicProbFixedDomain}
\begin{align}
\partial_t (\jeps \tueps) - V_{\epsilon} \cdot \nabla \tueps - \tueps \partial_t \jeps &= \nabla \cdot \big( D_{\epsilon} \nabla \tueps \big) +  \jeps f_{\epsilon} &\mbox{ in }& (0,T)\times \oe,
\\
-D_{\epsilon} \nabla \tueps \cdot \nu_{\epsilon} &= \treps'(\tueps - \rho) \jeps &\mbox{ on }& (0,T)\times \geps,
\\
\tueps &=0 &\mbox{ on }& (0,T)\times \partial \Omega,
\\
\partial_t \treps &= G_{\epsilon}(t,x,\tueps , \treps) &\mbox{ in }& (0,T) \times \oe,
\\
\label{MicProbFixedDomainIVueps}
\tueps(0) &=  \tuepsin &\mbox{ in }& \oe ,
\\
\label{MicProbFixedDomainIVreps}
\treps(0) &= \trepsin  &\mbox{ in }& \oe.
\end{align}
\end{subequations}
The definition of a weak solution of Problem $\tilde{\rm P}_\epsilon$ is given in  Definition \ref{DefMicProbFixedDomain} below.

We now state the assumptions on the data \\[0.5em]
\noindent\textbf{Assumptions on the data:}
\begin{enumerate}[label = (A\arabic*)]
\item\label{AssumptionInitialValueUeps} For the microscopic initial data $\uepsin\in H^1(\oe)$ a $C > 0$ exists such that 
\begin{align*}
\Vert \uepsin \Vert_{H^1(\oe)} \le C 
\end{align*}
uniformly w.r.t $\epsilon$. 
Further there exists $\uin \in H^1(\Omega)$ such that $\uepsin$ converges in the two-scale sense to $\uin$.
\item \label{AssumptionInitialValueReps} The initial radii $\repsin \in L^{\infty}(\Omega)$ are constant on every cell $\epsilon (Y + {\bf k})$, ${\bf k} \in K_{\epsilon}$. Furthermore, there exist $\uR, \oR \in \R$, $0 < \uR < \oR < \frac 1 2$, such that
\begin{align*}
\epsilon \uR \le \repsin \le \epsilon \oR \qquad \text{  in a.e. sense}.
\end{align*}
Also, we assume that a $C >0$ exists such that for any $0 \kl h \ll 1$ and $\Bell \in \Z^n$ with $\vert \epsilon \Bell \vert \kl h$ it holds 
\begin{align*}
\epsilon^{-1} \Vert \repsin(\cdot + \Bell \epsilon) - \repsin \Vert_{L^2(\Omega^h)} \le C \vert \Bell \epsilon \vert,
\end{align*}
with $\Omega^h:= \{x \in \Omega \, : \, \mathrm{dist}(\partial \Omega,x) \gr h \}$. Additionally, there exists $\rin \in L^{\infty}(\Omega)$, such that 
\begin{align*}
\epsilon^{-1}\repsin \rightarrow \rin \quad \mbox{ in the two-scale sense}
\end{align*}
in $L^p$ for every $p \in [1,\infty)$.
\item\label{AssumptionODEg} For the  (precipitation/dissolution) rate one has $g \in C^1(\R^2) \cap   W^{1,\infty}(\R^2)$.
\item\label{AssumptionRbounded} There exists $0<\delta_0 < \oR - \uR$, such that for all $u \in \R$ it holds that
\begin{enumerate}
[label = (\roman*)]
\item $g(u,R) \le 0 $ for all $R \in \big[\oR - \delta_0, \oR\big]$,
\item $g(u,R) \geq 0 $ for all $R \in \big[\uR, \uR + \delta_0\big]$.
\end{enumerate}
\item\label{AssumptionRechteSeitef} The source term is continuous, $f \in C^0\big([0,T]\times \overline{\Omega})$.
\item\label{AssumptionDiffusion} The diffusion tensor $D \in C^0\big(\overline{\Omega}\big)^{n\times n}$ is symmetric and coercive, \ie there exists $c_0>0$ such that 
\begin{align*}
D(x) \xi \cdot \xi \geq c_0 \quad \mbox{ for all } x \in \overline{\Omega}, \, \xi \in \R^n.
\end{align*}
\end{enumerate}

Note that, by the assumption on the shifts of $\repsin$ in Assumption  \ref{AssumptionInitialValueReps},  $\epsilon^{-1} \repsin$ is relatively compact in $L^2(\Omega)$, and  $\rin \in H^1(\Omega)$, see Remark \ref{RemarkKonvergenzReps}.

\subsection{Weak formulation of the micro model}

Now, we give the definition of the weak solution of the microscopic model $\eqref{MicroscopicModel}$. In the following, we suppress the notation $\tilde{\cdot}$, \ie the solution of Problem $\tilde{\rm P}_\epsilon$  
is denoted (by an abuse of notation) by $(\ueps,\reps)$. We consider weak solutions, as defined below.
\begin{definition}\label{DefMicProbFixedDomain}
The pair $(\ueps,\reps)$ with $\ueps  \in L^2((0,T),H^1(\oe))$ such that $\partial_t(\jeps \ueps) \in L^2((0,T),H^1(\oe)')$ 
and $\reps \in W^{1,\infty}((0,T),L^2(\oe))$  s.t. $\reps,\, \partial_t \reps \in L^{\infty}((0,T)\times \oe)$, and $\epsilon \uR \le \reps \le \epsilon \oR$  is a weak solution of Problem $\tilde{\rm P}_\epsilon$ if, for all $\phi \in H^1(\oe)$, it holds for almost every $t\in (0,T)$ that 
\begin{align}
\begin{aligned}\label{WeakFormulationMicroscopicModel}
\langle \partial_t & \big(\jeps \ueps\big) ,\phi \rangle_{H^1(\oe)',H^1(\oe)} - \int_{\oe} \ueps \partial_t \jeps \phi dx - \int_{\oe} V_{\epsilon} \cdot \nabla \ueps \phi dx 
\\
&+ \int_{\oe} D_{\epsilon}\nabla \ueps \cdot \nabla \phi dx = \int_{\oe} \jeps f_{\epsilon} \phi dx  - \int_{\geps} \partial_t \reps (\ueps - \rho) \jeps \phi d\sigma ,
\end{aligned}
\end{align}
and 
\begin{align}\label{DefMicProblFixedDomainODEReps}
\partial_t \reps(t,x) =  \frac{\epsilon}{|\epsilon \Gamma|} \int_{\geps\left(\epsilon \bfxe\right)} g \left( \ueps (t,z),\frac{\reps(t,z)}{\epsilon}\right) d\sigma.
\end{align}
Additionally, $(\ueps, \reps)$ fulfill the initial conditions $\eqref{MicProbFixedDomainIVueps}$ and $\eqref{MicProbFixedDomainIVreps}$.
\end{definition}

\begin{remark}\label{BemerkungRegReps}\
\begin{enumerate}[label = (\roman*)]
    \item We emphasize that due to the mean value theorem, see \cite[Chapter 5.9, Theorem 2]{EvansPartialDifferentialEquations}, it holds that $\reps \in C^{0,1}([0,T], L^{\infty}(\oe))$. Therefore, \eqref{DefMicProblFixedDomainODEReps} holds  for all $t \in [0, T]$ and almost every $x \in \Omega$. 
    \item In the following we extend $\reps$ constantly from  every perforated micro cell $\epsilon (Y^{\ast} + \bf k)$  with ${\bf k} \in K_{\epsilon}$ to the whole cell $\epsilon (Y + \bf k)$, and therefore $\reps$ can be treated as a function  defined on the whole domain $\Omega$.
\end{enumerate}
\end{remark}
We will see that $\partial_t \ueps \in L^2((0,T),H^1(\oe)')$ and  the initial condition for $\ueps$ is well-defined. However, the norm of $\partial_t \ueps$ is of order $\epsilon^{-1}$ and therefore we  work with the time-derivative $\partial_t (\jeps \ueps)$ to establish convergence results for the micro solutions to pass to the limit $\epsilon \to 0$.

\begin{remark}\label{RemarkIdentityJacobianVelocity}
In fact, \eqref{WeakFormulationMicroscopicModel}can be rewritten as 
\begin{equation}\label{eq:tildeu2}
\begin{array}{l}
\langle \partial_t \big(\jeps \ueps\big) ,\phi \rangle_{H^1(\oe)',H^1(\oe)} + \int_{\oe} \ueps \veps \cdot \nabla \peps dx - \int_{\partial \oe} \ueps \peps \veps \cdot \nu d\sigma 
\\[0.5em]
\qquad + \int_{\oe} D_{\epsilon}\nabla \ueps \cdot \nabla \phi dx = \int_{\oe} \jeps f_{\epsilon} \phi dx  - \int_{\geps} \partial_t \reps (\ueps - \rho) \jeps \phi d\sigma ,
\end{array}
\end{equation}
This is a consequence of the equality  
\begin{align*}
- \int_{\oe} \tueps \partial_t \jeps \peps dx - \int_{\oe} \veps \cdot \nabla \tueps \peps dx   = \int_{\oe} \tueps \veps \cdot \nabla \peps dx - \int_{\partial \oe} \tueps \peps \veps \cdot \nu d\sigma, 
\end{align*}
all $\tueps, \peps \in H^1(\oe)$, 
where $\jeps, \veps$ are defined in \eqref{DefFepsJeps}--\eqref{DefVepsDeps}. This holds since  
\begin{align}\label{eq:Piolaconsequence}
\partial_t \jeps = \nabla \cdot \veps 
\end{align} 
(see \cite[p. 105]{GahnNRP}),
which is a direct consequence of Piola's  identity $\nabla \cdot (\jeps \feps^{-1}) = 0$, see e.g. \cite[p. 117]{MarsdenHughes}. 
More precisely, for the term on the left in \eqref{eq:Piolaconsequence} one has 
\begin{align}\label{JacobiFormula}
\partial_t \jeps = \jeps tr\big(\feps^{-1} \partial_t \feps \big) 
\end{align}
due to the Jacobi formula. For the term on the right, the product rule gives  
\begin{align*}
\nabla \cdot \veps =  tr\big(\jeps \feps^{-1} \nabla \partial_t \seps\big) + \big[\nabla \cdot (\jeps \feps^{-1}) \big] \cdot \partial_t \seps . \end{align*}
Using  Piola's identity,  \eqref{eq:Piolaconsequence} follows immediately. 
\end{remark}

\section{Main results}
The aim of the paper is two-folded.  First, we show the existence of a  weak solution of the microscopic problem in $\eqref{MicProbFixedDomain}$ together with uniform \textit{a priori} estimates with respect to $\epsilon$. The following theorem is proven in Section \ref{SectionExistence}. 
\begin{theorem}\label{ExistenceTheorem}
There exists a weak solution of the (transformed, microscopic) Problem $\tilde{\rm P}_\epsilon$  
in the sense of Definition \ref{DefMicProbFixedDomain}. This solution fulfills the \textit{a priori} estimates in Lemma \ref{AprioriEstimatesSummary}.
\end{theorem}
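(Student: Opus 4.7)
The plan is to employ the Rothe method (semi-discretization in time), as announced in the introduction. Fix $N \in \mathbb{N}$, set $\dt = T/N$ and $t_j = j\dt$. Starting from $(u^0_\epsilon, R^0_\epsilon) = (\tuepsin, \trepsin)$, I would inductively construct $(\ujeps, \rjeps)$ for $j = 1, \dots, N$ as a solution of the time-discrete coupled problem: find $\rjeps \in L^\infty(\Omega)$ with $\epsilon\uR \le \rjeps \le \epsilon \oR$, constant on each $\epsilon(Y+{\bf k})$, and $\ujeps \in H^1(\oe)$ such that
\begin{align*}
\int_{\oe}\frac{\jjeps \ujeps - \jjoeps \ujoeps}{\dt}\phi\,dx &+ \int_{\oe} D^j_\epsilon \nabla \ujeps \cdot \nabla \phi\,dx - \int_{\oe} \ujeps\,\frac{\jjeps-\jjoeps}{\dt}\phi\,dx \\
& - \int_{\oe} V^j_\epsilon \cdot \nabla \ujeps\,\phi\,dx = \int_{\oe} \jjeps f^j_\epsilon \phi\,dx - \int_{\geps} \frac{\rjeps - \rjoeps}{\dt}(\ujeps - \rho)\jjeps \phi\,d\sigma
\end{align*}
for all $\phi \in H^1(\oe)$, together with the cell-wise algebraic equation $(\rjeps - \rjoeps)/\dt = G_\epsilon(t_j,x,\ujeps,\rjeps)$, where $\jjeps, V^j_\epsilon, D^j_\epsilon$ are built from $\rjeps$ via \eqref{DefFepsJeps}--\eqref{DefVepsDeps}.

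For solvability at a single step I would decouple the system by a Banach fixed-point argument. Given a candidate $R \in L^\infty(\Omega)$ with $\epsilon\uR \le R \le \epsilon\oR$, the linear elliptic problem for $\ujeps$ is uniquely solvable by Lax--Milgram: coercivity follows from \ref{AssumptionDiffusion} and the uniform determinant bound \eqref{DefinitionTilde}, while the non-symmetric first-order terms involving $V^j_\epsilon$ and $(\jjeps-\jjoeps)/\dt$ are absorbed by taking $\dt$ small. The update for $\rjeps$ is then performed cell by cell; since $g \in W^{1,\infty}$ by \ref{AssumptionODEg}, the resulting composite map is a contraction on a ball in $L^\infty(\Omega)$ for $\dt$ small (at fixed $\epsilon$). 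The admissible range $\epsilon\uR \le \rjeps \le \epsilon\oR$ is maintained by a discrete barrier argument using \ref{AssumptionRbounded}: whenever $\rjoeps$ enters the strip $[\epsilon\oR-\epsilon\delta_0,\epsilon\oR]$ (respectively $[\epsilon\uR,\epsilon\uR+\epsilon\delta_0]$) the sign of $g$ forces $\rjeps \le \rjoeps$ (respectively $\rjeps \ge \rjoeps$), so the interval is invariant.

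Next, I would assemble the piecewise-constant and piecewise-affine Rothe interpolants $\ubteps, \uhteps, \rbteps, \rhteps$ (and $\jbteps, \jhteps, \dbteps, \vbteps$) and derive a priori estimates uniform in both $\dt$ and $\epsilon$. Testing the discrete equation with $\phi = \ujeps$ and summing in $j$, the convective and mass-difference terms combine, via the discrete counterpart of the identity $\partial_t \jeps = \nabla \cdot \veps$ (that is, Piola's identity applied to $\seps$; see Remark \ref{RemarkIdentityJacobianVelocity}), into a telescoping $\tfrac12 \int_{\oe}\jjeps|\ujeps|^2\,dx$. Coercivity of $D^j_\epsilon$ then yields $L^2((0,T), H^1(\oe))$ control of $\uhteps$, while the boundary term is absorbed using an $\epsilon$-uniform trace inequality on the perforated domain combined with $\|\partial_t \rhteps\|_{L^\infty} \le \|g\|_{L^\infty}$. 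Testing with an arbitrary $\phi \in H^1(\oe)$ and using the estimates already collected, one then obtains a uniform estimate on $\partial_t(\jhteps \uhteps)$ in $L^2((0,T),H^1(\oe)')$; this is the workable substitute for the missing uniform estimate on $\partial_t \uhteps$.

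Finally, at fixed $\epsilon$, I would pass to the limit $\dt \to 0$. Since $\rhteps$ is uniformly Lipschitz in $t$, Arzel\`a--Ascoli provides uniform convergence along a subsequence, and hence uniform convergence of $\jbteps, \vbteps$ and $\dbteps$. For the concentration, Aubin--Lions cannot be applied directly to $\uhteps$, but the product $\jhteps \uhteps$ is bounded in $L^2((0,T),H^1(\oe))$ with time derivative in $L^2((0,T),H^1(\oe)')$, so Aubin--Lions yields strong convergence of $\jhteps \uhteps$ in $L^2((0,T)\times \oe)$; the uniform lower bound \eqref{DefinitionTilde} on $\jhteps$ then transfers this strong convergence to $\uhteps$ itself. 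This is the decisive step: it allows passage to the limit in the nonlinear surface integrals defining $G_\epsilon$ and in all coefficient products, so that $(\ueps,\reps)$ satisfies \eqref{WeakFormulationMicroscopicModel}--\eqref{DefMicProblFixedDomainODEReps} together with the prescribed initial conditions, and the estimates carried through the limit are exactly those of Lemma \ref{AprioriEstimatesSummary}. I expect the main obstacle to be precisely this last detour through the product $\jhteps \uhteps$: because the natural energy identity provides time regularity only for the product and not for $\uhteps$ alone, the compactness and nonlinear passages all hinge on the uniform positivity of the Hanzawa Jacobian.
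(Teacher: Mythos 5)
Your plan follows the same overall Rothe-method strategy as the paper, but differs at three places in ways worth flagging.

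First, the discretization scheme is different. The paper's time-discrete problem (Definition~\ref{DefTimeDiscModel}) is \emph{fully explicit} in the nonlinear data: $\rjeps$ is computed from $\rjoeps$ via $G_\epsilon^{j-1}$, which uses only $(\ujoeps,\rjoeps)$, and the boundary term in \eqref{VarEquTimeDiscModel} is $G_\epsilon^{j-1}(\ujoeps - \rho)$, again with the \emph{previous} iterate. Consequently each time step decouples: $\rjeps$ is given by a closed-form update, and \eqref{VarEquTimeDiscModel} is a \emph{linear} elliptic problem for $\ujeps$ (see Remark~\ref{rem:Linear_TimeDiscrete}). Your version is implicit — $\rjeps$ solves an algebraic equation involving both $\ujeps$ and $\rjeps$, and the boundary term contains $\ujeps$ — so you are forced into the Banach fixed-point machinery. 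This can be made to work at fixed $\epsilon$ and small $\dt$, but it adds a layer of complexity the paper avoids entirely, and the contraction constant you get will depend on $\epsilon$.

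Second, the discrete energy estimate. You claim the convective term $\int V_\epsilon^j\cdot\nabla\ujeps\,\ujeps$ and the mass-difference term $\int\ujeps\frac{\jjeps-\jjoeps}{\dt}\ujeps$ telescope against $\langle\frac{\jjeps\ujeps-\jjoeps\ujoeps}{\dt},\ujeps\rangle$ via a \emph{discrete} counterpart of $\partial_t\jeps = \nabla\cdot\veps$. That identity is exact only in the continuous setting; the discrete version incurs $O(\dt)$ errors per step (so $O(1)$ over $N$ steps), and one would need to track these carefully. The paper does not attempt such a cancellation: in Proposition~\ref{AprioriEstimateDiscreteSolutionUeps} the terms $B_\epsilon^1$, $B_\epsilon^3$, $B_\epsilon^5$ (the convective, mass-difference and square-root-Jacobian cross-terms) are bounded individually using the $L^\infty$ bounds $\|V_\epsilon^j\|_\infty\le C$, \eqref{EstimatesTimeDiscCoeffDtJ}, and the discrete Gronwall inequality. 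This is more robust. Also note that the correct bound on the discrete time derivative of the radius is $\|(\rjeps-\rjoeps)/\dt\|_{L^\infty}\le C\epsilon$, not $\le\|g\|_{L^\infty}$; the extra factor $\epsilon$ comes from the definition of $G_\epsilon$ and is essential to cancel the $\epsilon^{-1}$ in the scaled trace inequality.

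Third, for the passage $\dt\to 0$ at \emph{fixed} $\epsilon$, you claim Aubin--Lions cannot be applied directly to $\uhteps$ and therefore you go through $\jhteps\uhteps$. This is not an obstruction here: Lemma~\ref{AprioriUepsDt} gives $\epsilon\|\partial_t\uhteps\|_{L^2(H^1(\oe)')}\le C$, so at fixed $\epsilon$ the time derivative of $\uhteps$ \emph{is} bounded (with an $\epsilon$-dependent constant, which is irrelevant for $\dt\to 0$). The paper applies Aubin--Lions directly to $\uhteps$ in Corollary~\ref{ConvergenceUepsdt}. The estimate that is genuinely lost in $\epsilon$ — and which motivates the quantity $\jeps\ueps$ — matters only later, in the homogenization step $\epsilon\to 0$ (Section~\ref{SubsectionStrongConvergenceUeps}), where the paper does not rely on Aubin--Lions at all but instead introduces the auxiliary elliptic problem \eqref{AuxiliaryProblem}. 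Your route through $\jhteps\uhteps$ plus the uniform lower bound on $\jhteps$ is also correct for $\dt\to 0$, just an unnecessary detour, and it conflates the two limit passages.

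None of these differences are fatal to your outline, but the explicit discretization and the direct term-by-term bounds are precisely what make the paper's argument go through cleanly, and you should be aware that the discrete Piola cancellation you invoke is at best approximate.
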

In a second step we use the $\epsilon$-uniform estimates to  derive two-scale compactness results for the micro solutions, and show that the $\epsilon \to 0$ limit functions 
$u_0: (0,T) \times \Omega \rightarrow \R$ and $R_0: (0,T) \times \Omega \rightarrow \R$
solve a macroscopic model with homogenized coefficients. 
In this sense, we start by defining 
\begin{align*}
    S_0(t,x,y) &= y + \big(R_0(t,x) - \oR\big) (\chi_0 \nu_0) (y),
\end{align*}
and 
\begin{align*}
    J_0(t,x,y) &= \det(\nabla_y S_0(t,x,y)),
    \\
    V_0(t,x,y) &= J_0(t,x,y) \nabla_y S_0(t,x,y)^{-1} \partial_t S_0(t,x,y),
\end{align*}
and the averaged quantities 
\begin{align*}
    q(t,x) &:= \int_{Y^{\ast}} \nabla_y \cdot V_0(t,x,y) dy,
    \\
    \bar{J}_0(t,x) &:= \int_{Y^{\ast}} J_0(t,x,y) dy.
\end{align*}
Further we define the moving cell surface as 
\begin{align*}
\Gamma(t,x):= \partial B_{R_0(t,x)}(x).
\end{align*}
Now, the macroscopic model is \\
{\bf Problem ${\rm P}_0$}. Find $u_0, R_0: [0, T) \times \bar{\Omega} \rightarrow \R$ solving 
\begin{align}
\begin{aligned}\label{MacroModelStrong}
\partial_t \big(\bar{J}_0 u_0\big) &- u_0 q - \nabla \cdot \big(D_0^{\ast} \nabla u_0 \big) \\
& = \int_{Y^{\ast}} J_0 f_0 dy - \vert \Gamma (t,x)\vert \partial_t R_0 (u_0 - \rho) &\mbox{ in }& (0,T)\times \Omega,
\\
-D_0^{\ast} \nabla u_0 \cdot \nu &= 0 &\mbox{ on }& (0,T)\times \partial \Omega,
\\
\partial_t R_0 &= g(u_0,R_0) &\mbox{ in }& (0,T)\times \Omega,
\\
u_0(0) &= \uin &\mbox{ in }& \Omega,
\\
 R_0(0) &= \rin &\mbox{ in }& \Omega,
\end{aligned}
\end{align}
where $D_0^{\ast}$ is the homogenized diffusion coefficient defined in $\eqref{DefinitionHomogenizedDiffusion}$ via the cell problems $\eqref{CellProblems}$.

We proceed with the definition of a weak solution for the macroscopic model.
\begin{definition}\label{def:weakMacro}
A weak solution of the (macroscopic) Problem ${\rm P}_0$ 
is a pair $(u_0,R_0)$ satisfying 
\begin{align*}
    u_0 &\in L^2((0,T),H^1(\Omega)) \quad \mbox{with } \partial_t \big(\bar{J}_0 u_0 \big) \in L^2((0,T),H^1(\Omega)'),
    \\
    R_0 &\in W^{1,\infty}((0,T),L^{\infty}(\Omega)),
\end{align*}
and for all $\phi \in H^1(\Omega)$ and almost every $t \in (0,T)$ it holds that
\begin{align}
\begin{aligned}\label{MacroModelVar}
\langle \partial_t (\bar{J}_0 u_0 ) ,& \phi \rangle_{H^1(\Omega)',H^1(\Omega)}  -  \int_{\Omega} q u_0 \phi dx  +   \int_{\Omega} D_0^{\ast} \nabla u_0 \cdot \nabla \phi dx  
\\
&=   \int_{\Omega} \int_{Y^{\ast}} J_0 f_0 dy \phi dx  -  \int_{\Omega} \partial_t R_0 (u_0 - \rho) \phi \vert \Gamma(t,x)\vert dx .
\end{aligned}
\end{align}
Additionally, it holds almost everywhere in $(0,T)\times \Omega$
\begin{align*}
    \partial_t R_0 = g(u_0,R_0).
\end{align*}
Further, there hold the initial conditions $(\bar{J}_0u_0)(0) = \bar{J}_0(0) \uin$ and $R_0(0) = \rin$.
\end{definition}
 We emphasize the from the function spaces in the Definition \ref{def:weakMacro} and the fact that $\bar{J}_0 \in L^{\infty}((0,T),L^{\infty}(\Omega)) \cap L^{\infty}((0,T),H^1(\Omega))$, see Remark \ref{RegularityBarJ0} and Remark \ref{RemarkMainResult}, it follows immediately that $(\bar{J}_0u_0) \in C^0([0,T],L^2(\Omega))$, whereas it is not obvious in which sense we can understand $u_0(0)$, see also Remark \ref{RemarkMainResult}.
In the remaining part of this work, we prove that the macroscopic (weak) solution defined above is obtained as the limit $\epsilon \to 0$ of solutions to the microscopic problems in \eqref{MicProbFixedDomain}. 
\begin{theorem}\label{MainResultConvergenceMacroModel}
Let $(\ueps,\reps)$ be a weak solution of the (microscopic) Problem $\tilde{\rm P}_\epsilon$,  
in the sense of Definition \ref{DefMicProbFixedDomain}. Then, up to a sequence $\epsilon \to 0$, it holds that 
\begin{align}
    \ueps &\rightarrow u_0 &\mbox{ strongly in the two-scale sense in }& L^2,
    \\
    \epsilon^{-1} \reps &\rightarrow R_0 &\mbox{ in }& L^p((0,T)\times \Omega)
\end{align}
for all $p \in [1,\infty)$, where the pair $(u_0,R_0)$ is a weak solution of the macroscopic model $\eqref{MacroModelStrong}$, in the sense of Definition \ref{def:weakMacro}. Additionally, we have $u_0 \in C^0([0,T],L^2(\Omega))$ and $u_0 (0) = u_0$.
\end{theorem}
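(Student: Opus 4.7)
The plan is to combine two-scale convergence with sufficiently strong compactness in order to pass to the limit $\epsilon \to 0$ in both \eqref{WeakFormulationMicroscopicModel} and \eqref{DefMicProblFixedDomainODEReps}. First, I would start from the $\epsilon$-uniform bounds of Lemma \ref{AprioriEstimatesSummary} (uniform control of $\ueps$ in $L^2((0,T),H^1(\oe))$, of $\partial_t(\jeps \ueps)$ in $L^2((0,T),H^1(\oe)')$, and of $\reps$, $\partial_t \reps$ in $L^\infty$) and extract, along a subsequence not relabelled, weak two-scale limits $u_0 \in L^2((0,T),H^1(\Omega))$ and $u_1 \in L^2((0,T)\times \Omega;H^1_{\per}(Y^\ast)/\R)$ with $\ueps \to u_0$ and $\nabla \ueps \to \nabla_x u_0 + \nabla_y u_1$ in the two-scale sense. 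At this stage only weak two-scale convergence is available, which is insufficient to pass to the limit in the nonlinearities.

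The second step is the strong convergence $\epsilon^{-1}\reps \to R_0$ in $L^p((0,T)\times \Omega)$. The uniform shift estimate on $\repsin$ in Assumption \ref{AssumptionInitialValueReps}, together with the $W^{1,\infty}$-regularity of $g$ in Assumption \ref{AssumptionODEg} and a Gr\"onwall argument applied to the difference of two $\epsilon$-shifts of the ODE \eqref{DefMicProblFixedDomainODEReps}, propagates this shift estimate to $\epsilon^{-1}\reps(t,\cdot)$ uniformly in $t$. Combined with equicontinuity in time (via $\partial_t \reps \in L^\infty$) and the $L^\infty$-bound $\uR \le \epsilon^{-1}\reps \le \oR$, a Kolmogorov--Riesz argument yields strong compactness in every $L^p$. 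By continuity of the defining formulas in $R_0$, this transfers to the strong two-scale convergences $\jeps \to J_0$, $\veps \to V_0$, $D_\epsilon \to J_0(\nabla_y S_0)^{-1}D(\nabla_y S_0)^{-T}$, and also to the surface factor $\epsilon|\geps(t,\epsilon\lfloor x/\epsilon\rfloor)|/\epsilon^{n} \to |\Gamma(t,x)|/|Y|$.

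The main obstacle is the strong two-scale convergence of $\ueps$ itself, since the natural candidate $\partial_t \ueps$ is not $\epsilon$-uniformly bounded (its norm typically blows up like $\epsilon^{-1}$). Following the strategy indicated in the introduction, I would transfer compactness through the product $w_\epsilon := \jeps \ueps$: its $L^2$-norm in space and its time derivative in $H^1(\oe)'$ are both uniformly controlled by Definition \ref{DefMicProbFixedDomain} and Lemma \ref{AprioriEstimatesSummary}, so an Aubin--Lions/Simon-type argument gives strong $L^2$-compactness of $w_\epsilon$. The uniform positivity $\jeps \ge (\uR/\oR)^{n-1}>0$ from \eqref{DefinitionTilde} and the strong two-scale convergence $\jeps \to J_0$ then let one divide and recover $\ueps = w_\epsilon/\jeps$ strongly in the two-scale sense, with the auxiliary approximation problem hinted at in the introduction handling the technical fact that compactness is formulated on the fixed domain $\oe$. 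Surface-level convergence needed for the nonlinear boundary integral $\int_{\geps}\partial_t \reps(\ueps-\rho)\jeps \phi\, d\sigma$ and for the nonlocal right-hand side in \eqref{DefMicProblFixedDomainODEReps} is obtained by combining this strong convergence with a two-scale compactness result for surface averages of the form $\epsilon|\geps|^{-1}\int_{\geps(\epsilon\lfloor x/\epsilon\rfloor)} h\, d\sigma$ applied to $h=\ueps$ and $h=g(\ueps,\reps/\epsilon)$; the Lipschitz property of $g$ then upgrades the convergence of $\partial_t \reps$ to a strong one as well.

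Once all these convergences are in hand, I would insert oscillating test functions $\phi(t,x)+\epsilon \phi_1(t,x,x/\epsilon)$, with $\phi_1$ smooth and $Y$-periodic in $y$ and supported in $\overline{Y^\ast}$, into \eqref{WeakFormulationMicroscopicModel} and let $\epsilon \to 0$. The variation in $\phi_1$ decouples in the usual way, yielding the standard cell problems that define the homogenized diffusion $D_0^\ast$; the variation in $\phi$ yields the macroscopic equation \eqref{MacroModelVar}, with $\bar{J}_0 = \int_{Y^\ast} J_0\, dy$ appearing as the limit porosity, $q$ arising from the combined limit of $\ueps \partial_t \jeps$ and $\veps\cdot\nabla\ueps$ (cf.\ Remark \ref{RemarkIdentityJacobianVelocity}), and $|\Gamma(t,x)|$ from the cell surface limit. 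Passage to the limit in \eqref{DefMicProblFixedDomainODEReps} gives $\partial_t R_0 = g(u_0,R_0)$ almost everywhere, and the initial conditions $(\bar{J}_0 u_0)(0)=\bar{J}_0(0)\uin$ and $R_0(0)=\rin$ are inherited from Assumptions \ref{AssumptionInitialValueUeps}--\ref{AssumptionInitialValueReps}. Finally, $u_0 \in C^0([0,T],L^2(\Omega))$ together with $u_0(0)=\uin$ follow from $\bar{J}_0 u_0 \in C^0([0,T],L^2(\Omega))$ (Definition \ref{def:weakMacro}) and the uniform positivity of $\bar{J}_0$ inherited from $J_0 \ge (\uR/\oR)^{n-1}$.
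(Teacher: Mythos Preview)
Your overall strategy matches the paper's: extract weak two-scale limits from Lemma \ref{AprioriEstimatesSummary}, obtain strong $L^p$-compactness of $\epsilon^{-1}\reps$ by propagating the shift estimate of Assumption \ref{AssumptionInitialValueReps} through the ODE via Gr\"onwall and invoking Kolmogorov--Riesz, deduce strong two-scale convergence of the coefficients $\jeps,D_\epsilon,\dots$, identify $\partial_t R_0$ through a surface-average compactness lemma, and then pass to the limit with oscillating test functions to produce the cell problems and \eqref{MacroModelVar}. These parts are fine.

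The genuine gap is in your argument for the strong two-scale convergence of $\ueps$. You propose to run Aubin--Lions directly on $w_\epsilon:=\jeps\ueps$, using the uniform $L^2$ bound in space and the uniform bound on $\partial_t w_\epsilon$ in $H^1(\oe)'$, and then divide by $\jeps$. But Aubin--Lions needs a uniform bound on $w_\epsilon$ in a space \emph{compactly} embedded into $L^2(\oe)$; an $L^2$ bound alone is not enough. The natural candidate $H^1(\oe)$ fails because $\nabla w_\epsilon=(\nabla\jeps)\ueps+\jeps\nabla\ueps$ and $\|\nabla\jeps\|_{L^\infty}$ is of order $\epsilon^{-1}$ (see \eqref{EstimateGradJeps} and Lemma \ref{AprioriEstimatesSummary}). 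This is exactly the obstruction the paper singles out: neither $\ueps$ (bad time derivative) nor $\jeps\ueps$ (bad gradient) fits Aubin--Lions directly.

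The auxiliary problem is not a technicality about $\oe$ versus $\Omega$; it is the mechanism that resolves this obstruction. One solves the elliptic problem $-\Delta\weps+\weps=\jeps(\ueps-\phi_k)$ in $\oe$ with Neumann data, for smooth $\phi_k\to u_0$ in $L^2((0,T),H^1(\Omega))$. Because the right-hand side has a \emph{uniformly} bounded time derivative in $H^1(\oe)'$ (it is $\partial_t(\jeps\ueps)$ plus controlled terms), elliptic regularity converts this into a uniform $H^1((0,T),H^1(\oe))$ bound on $\weps$ (Lemma \ref{AprioriEstimateAuxiliaryEquation}); extension and Aubin--Lions then give strong convergence of $\weps$. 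Testing the auxiliary equation with $\ueps-\phi_k$ and using $\jeps\ge c_0>0$ yields
\[
c_0\|\ueps-\phi_k\|_{L^2(\oe)}^2 \le \|\ueps-\phi_k\|_{H^1(\oe)}\,\|\weps\|_{H^1(\oe)},
\]
and one shows $\limsup_{\epsilon\to 0}\|\weps\|_{L^2((0,T),H^1(\oe))}\le C\|u_0-\phi_k\|_{L^2}$ by passing to the homogenized auxiliary equation. Sending $k\to\infty$ gives $\|\ueps-u_0\|_{L^2((0,T)\times\oe)}\to 0$ (Proposition \ref{StrongConvergenceUeps}). You should replace the direct Aubin--Lions step by this variational argument.

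A minor point: your final sentence for $u_0\in C^0([0,T],L^2(\Omega))$ is morally right but needs the time-regularity $\bar J_0^{-1}\in C^{0,1}([0,T],L^\infty(\Omega))$, which in turn uses $R_0\in C^{0,1}([0,T],L^\infty(\Omega))$ from $\partial_t R_0=g(u_0,R_0)$ and the boundedness of $g$; the paper makes this explicit and, alternatively, derives $\partial_t u_0\in L^2((0,T),W^{1,q}(\Omega)')$ for $q>n$ via the product rule.
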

For the definition of the two-scale convergence we refer to the Appendix \ref{SectionTwoScaleConvergence}. The proof of Theorem \ref{MainResultConvergenceMacroModel} is given in Section \ref{SectionDerivationMacroModel}.

\section{Existence of a microscopic solution and \textit{a priori} estimates}
\label{SectionExistence}

In this section we  use the Rothe-method to establish the existence of a weak solution of the microscopic model $\eqref{MicProbFixedDomain}$,  and therefore $\eqref{DefSolutionMicProblem}$, in the sense of Definition \ref{DefMicProbFixedDomain}. Futher, we show \textit{a priori} estimates for the solution depending explicitly on the parameter $\epsilon$. These \textit{a priori} estimates form the basis for the derivation of the macroscopic model.

\subsection{The time-discretized model}

We start by formulating the time-discretized model. For $N\in \N$ we define the time step $\dt := \frac{T}{N} $ and set $t^j:= j \dt $ for $j\in \{0,\ldots,N\}$. Then, we are looking for a sequence of solutions $(\ujeps,\rjeps)$ of the time-discretized problems. Clearly, for $j=0$ we take $\ueps^0 = \uepsin$ and $\reps^0 = \repsin$. 

\begin{definition}
\label{DefTimeDiscModel}
Let $j\in \{1,\ldots,N\}$ and $(\ueps^{j-1},\reps^{j-1}) \in H^1(\oe)\times L^{\infty}(\Roe)$ be given. A pair  $(\ujeps,\rjeps) \in H^1(\oe)\times L^{\infty}(\Roe)$ is a solution of the time-discrete problem at time $t^j$ if 
\begin{align}\label{VarEquTimeDiscModelR}
\frac{\rjeps - \reps^{j-1}}{\dt} = G_{\epsilon}^{j-1} \quad \mbox{ in } \Roe,
\end{align}
and for all $\phi \in H^1(\oe)$ it holds that
\begin{align}\label{VarEquTimeDiscModel}
\begin{aligned}
\int_{\oe} & \frac{\jjeps \ujeps - \jjoeps \ujoeps}{\dt} \phi dx + \int_{\oe} D_{\epsilon}^j \nabla \ujeps \cdot \nabla \phi dx
\\
=& \int_{\oe} V_{\epsilon}^j \cdot \nabla \ujeps \phi dx + \int_{\oe} \jjeps f_{\epsilon}^j \phi dx + \int_{\oe} \ujeps \frac{\jjeps - \jjoeps}{\dt} \phi dx
\\
&- \int_{\geps} G_{\epsilon}^{j-1} (\ujoeps - \rho ) \phi \jjeps d\sigma .
\end{aligned}
\end{align}
The time-discrete coefficients above are defined as 
\begin{align}\label{DefVepsDeps_j}
\begin{aligned}
G_{\epsilon}^{j-1}(x) &:= \frac{\epsilon}{|\epsilon \Gamma|} \int_{\geps\left(\epsilon \bfxe\right)} g \left(\ujoeps,\frac{\rjoeps}{\epsilon}\right) d\sigma ,
\\
S_{\epsilon}^j(x)&:= x + (\rjeps - \epsilon \oR ) \chi_0 \left(\fxe\right) \nu_0 \left(\fxe\right),
\\
F^j_{\epsilon}(x)&:=  I_n + \frac{\rjeps - \epsilon \oR}{\epsilon} \left[\nabla_y  \left(\chi_0 \nu_0\right)\right] \left(\fxe\right),
\\
\jjeps(x) &:= \det F^j_{\epsilon}(x),
\\
D_{\epsilon}^j(x) &:= \jjeps(x) \left[F_{\epsilon}^j(x)\right]^{-1} D\big(S_{\epsilon}^j(x)\big)  \left[F_{\epsilon}^j(x)\right]^{-T},
\\
V_{\epsilon}^j(x) &:= \jjeps(x)  \left[F_{\epsilon}^j(x)\right]^{-T} \frac{\rjeps(x) - \rjoeps(x)}{\dt}\chi_0 \left(\fxe\right) \nu_0 \left(\fxe\right),
\\
f_{\epsilon}^j(x) &:= f_{\epsilon}(t^j,x).
\end{aligned}
\end{align}
\end{definition}
\begin{remark}\label{rem:Linear_TimeDiscrete}
Given $\ujoeps, \rjoeps$, the existence of $\rjeps$ follows straightforwardly from \eqref{VarEquTimeDiscModelR}. Moreover, this also gives $\jjeps$, see \eqref{DefVepsDeps_j}, which means that, in fact, \eqref{VarEquTimeDiscModel} providing $\ujeps$ is linear. Moreover, as follows from Proposition \ref{PropEstimateDiscTimeReps} below,  $\rjeps$ is constant on every microcell $\epsilon(Y + {\bf k})$, assuming that the same holds for $\rjoeps$. Therefore we have $F^j_{\epsilon} = \nabla S_{\epsilon}^j$ in classical sense.
\end{remark}
The constant $C>0$ used below is generic and does not depend on $\dt $ or $\epsilon$.
\begin{proposition}\label{PropEstimateDiscTimeReps}
Let $j \in \{1,\ldots,N\}$ and $(\ujoeps, \rjoeps) \in H^1(\oe) \times L^{\infty}(\Roe)$ with $ \epsilon \uR \le \rjoeps \le \epsilon \oR $, and assume $\rjoeps$ is constant on every microscopic cell $\epsilon (Y + {\bf k})$ with ${\bf k} \in K_{\epsilon}$. Then, for $\dt$ small enough, we have $\rjeps \in L^{\infty}(\Roe)$ with $\epsilon \uR \le \rjeps \le \epsilon \oR$. Also, $\rjeps $ is constant on $\epsilon (Y + {\bf k})$, and one has 
\begin{align}\label{EstimateDiscreteTimeDerivReps}
\left\| \frac{\rjeps - \rjoeps}{\dt}\right\|_{L^{\infty}(\Roe)} \le C  \epsilon.
\end{align}
\end{proposition}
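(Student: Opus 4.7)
My plan is to exploit the fact that \eqref{VarEquTimeDiscModelR} is nothing but an explicit one-step update
\[
\rjeps = \rjoeps + \dt\, G_\epsilon^{j-1},
\]
so the whole statement reduces to verifying cell-wise properties of $G_\epsilon^{j-1}$. First, I would observe that the integration surface $\geps(\epsilon\bfxe)$ in the definition of $G_\epsilon^{j-1}$ in \eqref{DefVepsDeps_j} depends on $x$ only through the microcell $\epsilon(Y+{\bf k})$ containing $x$. Since $\rjoeps/\epsilon$ is constant on that cell by hypothesis, and $\ujoeps\big|_{\geps(\epsilon \bfxe)}$ likewise depends on $x$ only through the same cell, $G_\epsilon^{j-1}$ is constant on every $\epsilon(Y+{\bf k})$; together with the assumed constancy of $\rjoeps$, this immediately gives the cell-wise constancy of $\rjeps$. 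In particular, $F^j_\epsilon = \nabla S_\epsilon^j$ holds in the classical sense on each microcell, as mentioned in Remark \ref{rem:Linear_TimeDiscrete}.

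Next, for the estimate \eqref{EstimateDiscreteTimeDerivReps}, the key point is a scaling observation: the sphere $\geps(\epsilon\bfxe) = \epsilon\Gamma + \epsilon\bfxe$ has $(n-1)$-dimensional measure $|\epsilon\Gamma| = \epsilon^{n-1}|\Gamma|$, and by Assumption \ref{AssumptionODEg} the integrand is dominated pointwise by $\|g\|_{L^\infty(\R^2)}$. Hence
\[
|G_\epsilon^{j-1}(x)| \;\le\; \frac{\epsilon}{|\epsilon\Gamma|}\,\|g\|_{L^\infty(\R^2)}\,|\epsilon\Gamma| \;=\; \epsilon\,\|g\|_{L^\infty(\R^2)},
\]
which, combined with $(\rjeps-\rjoeps)/\dt = G_\epsilon^{j-1}$, yields \eqref{EstimateDiscreteTimeDerivReps} with a constant depending only on $\|g\|_{L^\infty}$.

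To preserve the confinement $\epsilon\uR \le \rjeps \le \epsilon\oR$, I would argue cell-by-cell and split into three regimes according to the value of $\rjoeps/\epsilon$. If $\rjoeps/\epsilon \in (\oR-\delta_0,\oR]$, Assumption \ref{AssumptionRbounded}(i) gives $g(\cdot,\rjoeps/\epsilon)\le 0$ pointwise, so $G_\epsilon^{j-1}\le 0$ and the upper bound $\rjeps \le \rjoeps \le \epsilon\oR$ follows for free; the lower bound $\rjeps \ge \rjoeps - \dt\,\epsilon\,\|g\|_{L^\infty} \ge \epsilon\uR$ is then secured by requiring $\dt\,\|g\|_{L^\infty}\le \oR-\delta_0-\uR$, which is a strictly positive quantity by Assumption \ref{AssumptionRbounded}. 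The symmetric regime $\rjoeps/\epsilon\in [\uR,\uR+\delta_0)$ is handled in the same way via Assumption \ref{AssumptionRbounded}(ii). In the interior regime $\rjoeps/\epsilon\in[\uR+\delta_0,\oR-\delta_0]$, both bounds follow at once from $|\dt\,G_\epsilon^{j-1}|\le \dt\,\epsilon\,\|g\|_{L^\infty}\le \epsilon\delta_0$, provided $\dt\le \delta_0/\|g\|_{L^\infty}$. All three smallness thresholds on $\dt$ are $\epsilon$-independent, which is exactly what the uniform-in-$\epsilon$ analysis requires.

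The only nontrivial ingredient, and the real reason the proposition is true, is the explicit scaling of the normalizing factor $\epsilon/|\epsilon\Gamma|$ against the surface measure: it produces precisely the prefactor $\epsilon$ in \eqref{EstimateDiscreteTimeDerivReps} and simultaneously makes the smallness conditions on $\dt$ independent of $\epsilon$. Apart from this, the argument is a purely cell-wise ODE estimate on an explicit update formula, and I do not foresee any further obstacle.
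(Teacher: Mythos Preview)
Your proposal is correct and follows essentially the same approach as the paper: both arguments rest on the explicit update $\rjeps = \rjoeps + \dt\,G_\epsilon^{j-1}$, the cell-wise constancy of $G_\epsilon^{j-1}$, and the bound $|G_\epsilon^{j-1}|\le \epsilon\|g\|_{L^\infty}$ combined with Assumption~\ref{AssumptionRbounded}. The only cosmetic difference is that the paper argues the upper and lower bounds separately, each via a two-case split (with the single threshold $\dt\le \delta_0/\|g\|_{L^\infty}$), whereas you use a three-regime split handling both bounds simultaneously, which leads to the additional (but equally $\epsilon$-independent) threshold $\dt\|g\|_{L^\infty}\le \oR-\delta_0-\uR$.
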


\begin{proof}
From the assumption $g \in L^{\infty}(\R \times \R)$ in \ref{AssumptionODEg} one obtains that $\rjeps \in L^{\infty}(\Roe)$, and that $\eqref{EstimateDiscreteTimeDerivReps}$ holds true. Further, the definition of $\rjeps$ implies that the function is constant on every $ \epsilon (Y + {\bf k})$ with ${\bf k}\in K_{\epsilon}$. It remains to check the lower and upper bound of $\rjeps$: We only prove the upper bound, since the lower bound follows by the same arguments. For almost every $x \in \Roe$, if $\rjoeps (x)  \in [\epsilon (\oR - \delta_0) , \epsilon \oR\big]$ we use Assumption \ref{AssumptionRbounded} to obtain  
\begin{align*}
\rjeps (x) \le \rjoeps(x) \le \epsilon \oR.
\end{align*} 
If $\epsilon \uR \le \rjoeps(x)  < \epsilon(\oR - \delta_0)$, taking $\dt \le \delta_0/\|g\|_{L^{\infty}(\R \times \R)}$ gives 
\begin{align*}
\rjeps = \rjoeps + \dt G_{\epsilon}^{j-1} \le \rjoeps + \epsilon \dt \|g\|_{L^{\infty}(\R \times \R)} \le \rjoeps + \epsilon \delta_0 < \epsilon \oR.
\end{align*}
Finally, \eqref{EstimateDiscreteTimeDerivReps} is a direct consequence of Assumption \ref{AssumptionODEg}.  
\end{proof}

The boundedness of $\rjeps$ gives uniform estimates for the time-discrete coefficients, as follows from the lemma below. 
\begin{lemma}\label{LemmaEstimatesTimeDisCoeff}
Let $j \in \{1,\ldots,N\}$, and $\ujoeps \in H^1(\oe)$, as well as $\rjoeps, \rjeps \in L^{\infty}(\Roe)$ be given. Assume that $ \epsilon \uR \le \rjoeps, \rjeps \le \epsilon \oR $ and $\rjoeps, \rjeps $ are constant in every  cell $\epsilon (Y + {\bf k})$ with ${\bf k} \in K_{\epsilon}$.
Then we have $\seps^j, F_{\epsilon}^j \in C^{\infty}(\overline{\oe})^n$, satisfying 
\begin{subequations}\label{EstimatesTimeDiscCoeff}
\begin{align}\label{EstimatesTimeDiscCoeffSeps}
\|\seps^j\|_{C^1(\overline{\oe})} = \|\seps^j\|_{C^0(\overline{\oe})} + \|F_{\epsilon}^j\|_{C^0(\overline{\oe})} \le C.
\end{align}
Similarly, for the Jacobi-determinant we have $\jjeps \in C^{\infty}(\overline{\oe})$, with 
\begin{align}\label{EstimatesTimeDiscCoeffJ}
\left(\frac{\uR}{\oR}\right)^{n-1} \le\jjeps \le C.
\end{align}
Further, we have $D_{\epsilon}^j \in C^0(\overline{\oe})^{n\times n}$ and $V_{\epsilon}^j \in C^0(\overline{\oe})^n$, with
\begin{align}\label{EstimatesTimeDiscCoeffDV}
 \|D_{\epsilon}^j\|_{C^0(\overline{\oe})} + \|V_{\epsilon}^j\|_{L^{\infty}(\oe)} \le C.
\end{align}
Additionally, it holds that
\begin{align}\label{EstimatesTimeDiscCoeffDtJ}
\left\| \frac{\jjeps - \jjoeps}{\dt}\right\|_{C^0(\overline{\oe})} \le C.
\end{align}
\end{subequations}
\end{lemma}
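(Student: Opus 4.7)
The plan is to exploit the explicit form of the coefficients in \eqref{DefVepsDeps_j} together with the key property that $\rjeps$ is constant on each micro cell $\epsilon(Y+{\bf k})$. All required smoothness comes from $\chi_0,\nu_0 \in C^\infty_{\per}(Y)$ composed with the smooth map $x\mapsto x/\epsilon$; the fact that the cutoff $\chi_0$ vanishes near $\partial Y$ ensures that the piecewise-constant function $\rjeps$ multiplied by $\chi_0(\cdot/\epsilon)\nu_0(\cdot/\epsilon)$ glues smoothly across cell interfaces, so that $\seps^j$ and hence $F_\epsilon^j,\jjeps$ are $C^\infty$ on $\overline{\oe}$.

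For the $C^1$-bound \eqref{EstimatesTimeDiscCoeffSeps}, I would simply read off \eqref{DefVepsDeps_j}. The second summand in $\seps^j$ is controlled by $|\rjeps-\epsilon\oR|\,\|\chi_0\nu_0\|_{L^\infty}\le \epsilon(\oR-\uR)\|\chi_0\nu_0\|_{L^\infty}$, while in $F_\epsilon^j$ the divergent factor $1/\epsilon$ is absorbed by the prefactor $(\rjeps-\epsilon\oR)/\epsilon$, which is bounded by $\oR-\uR$ thanks to $\epsilon\uR\le\rjeps\le\epsilon\oR$. The upper bound in \eqref{EstimatesTimeDiscCoeffJ} then follows since $\jjeps=\det F_\epsilon^j$ is a polynomial in the bounded entries of $F_\epsilon^j$, and the lower bound is exactly \eqref{DefinitionTilde} in the Appendix (the determinant computation there does not use any time-continuity of the radius, only $\epsilon\uR\le\rjeps\le\epsilon\oR$ and the cell-wise constancy, hence applies verbatim to $\rjeps$).

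The bound \eqref{EstimatesTimeDiscCoeffDV} on $D_\epsilon^j$ uses Assumption \ref{AssumptionDiffusion}: $D$ is continuous on the compact $\overline{\Omega}$, and $\seps^j(\overline{\oe})\subset\overline{\Omega}$ so $D\circ\seps^j$ is bounded; the inverse $(F_\epsilon^j)^{-1}$ is bounded by Cramer's rule using the already-established lower bound on $\jjeps$ and upper bound on $F_\epsilon^j$. For $V_\epsilon^j$, the extra factor $(\rjeps-\rjoeps)/\dt$ is bounded by $C\epsilon$ thanks to \eqref{EstimateDiscreteTimeDerivReps} in Proposition \ref{PropEstimateDiscTimeReps}, which is more than enough.

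The one step that needs a small computation is \eqref{EstimatesTimeDiscCoeffDtJ}; this is where the discrete analogue of Piola/Jacobi must be handled carefully, because on its face the mapping $r\mapsto F_\epsilon^j$ has a $1/\epsilon$ in its derivative. I would write $\jjeps=P_\epsilon(\rjeps;x/\epsilon)$ with $P_\epsilon(r;y)=\det\bigl(I_n+\tfrac{r-\epsilon\oR}{\epsilon}\nabla_y(\chi_0\nu_0)(y)\bigr)$ and apply the mean value theorem in the variable $r$:
\begin{equation*}
\frac{\jjeps-\jjoeps}{\dt}=\Bigl(\int_0^1\partial_r P_\epsilon\bigl(\rjoeps+s(\rjeps-\rjoeps);x/\epsilon\bigr)\,ds\Bigr)\cdot\frac{\rjeps-\rjoeps}{\dt}.
\end{equation*}
By the Jacobi formula, $\partial_r P_\epsilon(r;y)=\tfrac{1}{\epsilon}\det A_\epsilon(r;y)\,\mathrm{tr}\bigl(A_\epsilon(r;y)^{-1}\nabla_y(\chi_0\nu_0)(y)\bigr)$, where $A_\epsilon(r;y)=I_n+\tfrac{r-\epsilon\oR}{\epsilon}\nabla_y(\chi_0\nu_0)(y)$. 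For every convex combination of $\rjoeps$ and $\rjeps$, both lying in $[\epsilon\uR,\epsilon\oR]$, the matrix $A_\epsilon$ satisfies the same bounds as $F_\epsilon^j$, in particular the lower determinant bound from \eqref{DefinitionTilde}, so $|\partial_r P_\epsilon|\le C/\epsilon$. Combined with $|(\rjeps-\rjoeps)/\dt|\le C\epsilon$ from \eqref{EstimateDiscreteTimeDerivReps}, the factors of $\epsilon$ cancel and \eqref{EstimatesTimeDiscCoeffDtJ} follows. This cancellation is the one conceptual point of the proof; everything else is a direct inspection of \eqref{DefVepsDeps_j}.
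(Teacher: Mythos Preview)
Your proof is correct and follows essentially the same route as the paper's: smoothness from the cell-wise constancy of $\rjeps$ and the compact support of $\chi_0$, the $C^1$-bound from $|\rjeps-\epsilon\oR|\le\epsilon(\oR-\uR)$, the determinant bounds from the Appendix, and the inverse bound via Cramer's rule (the paper phrases this as $\|A^{-1}\|\le C|\det A|^{-1}\|A\|^{n-1}$, which is the same thing). For \eqref{EstimatesTimeDiscCoeffDtJ} the paper simply invokes ``the local Lipschitz continuity of the determinant, the essential boundedness of $F_\epsilon^j$, and \eqref{EstimateDiscreteTimeDerivReps}'', which hides exactly the $\epsilon$-cancellation you spell out with the Jacobi formula and the mean value theorem in $r$; your more explicit computation is a welcome clarification but not a different argument.
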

\begin{proof}
First, we observe that, since $F_{\epsilon}^j = \nabla \seps^j \in C^{\infty}(\overline{\oe})^{n\times n}$, the regularity and the estimates for $F_{\epsilon}^j$ are a direct consequence of the results for $\seps^j$. 

Next, since $\rjeps$ is constant on $\epsilon (Y + {\bf k})$ for every ${\bf k} \in K_{\epsilon}$, $\chi_0$ and $\nu_0$ are smooth, and $\chi_0\left(\frac{\cdot}{\epsilon}\right)$ has compact support in $\epsilon (Y + {\bf k})$. This immediately implies  the smoothness of $\seps^j$, $F_{\epsilon}^j$, $\jjeps$, and $V_{\epsilon}^j$.  Further, we have $F_{\epsilon}^j = E_n$ on the lateral boundary of the micro cell $\partial ( \epsilon (Y + {\bf k}))$, 
and, Assumption \ref{AssumptionDiffusion} gives $D_{\epsilon}^j \in C^0(\overline{\oe})^{n\times n}$.
%
%
%
%
Inequality $\eqref{EstimatesTimeDiscCoeffSeps}$ follows immediately from  $\epsilon \uR \le \rjeps \le \epsilon \oR$. For the estimate $\eqref{EstimatesTimeDiscCoeffJ}$, we first notice that the regularity of the determinant and the essential boundedness of $F_{\epsilon}^j$ implies $\|\jjeps\|_{L^{\infty}(\oe)} \le C$. The lower bound is obtained from the results in Appendix 
\ref{SectionAppendixHanzawaReferenceElement}.

To prove $\eqref{EstimatesTimeDiscCoeffDV}$, one needs to control the inverse of $F_{\epsilon}^j(x)$. This is achieved by using the following inequality, stating that a $C > 0$ exists such that, for any invertible matrix $A \in \R^{n\times n}$, and for an arbitrary norm in $\R^{n\times n}$, 
\begin{align}\label{BoundInverseMatrix}
\|A^{-1}\| \le \frac{C}{|\det (A)|} \|A\|^{n-1}.
\end{align}
This inequality is straightforward for the spectral norm in $\R^{n\times n}$, and extends to any norm by the equivalence of norms in finite dimensional vector spaces. Then, by \eqref{EstimatesTimeDiscCoeffJ}, for almost every $x \in \oe$ one gets  
\begin{align*}
\left\| \left[F_{\epsilon}^j(x)\right]^{-1}\right\| \le \frac{C}{\jjeps(x)} \left\|F_{\epsilon}^j(x) \right\|^{n-1} \le C , 
\end{align*}
implying $\eqref{EstimatesTimeDiscCoeffDV}$.  Finally,  $\eqref{EstimatesTimeDiscCoeffDtJ}$ follows from the local Lipschitz continuity of the determinant, the essential boundedness of $F_{\epsilon}^j$, and the inequality $\eqref{EstimateDiscreteTimeDerivReps}$.
\end{proof}

We are now able to prove the existence of a weak solution for the time-discrete problems,  
together with some $\epsilon$-dependent \textit{a priori} estimates.
\begin{proposition}\label{ExistenceTimDisProb}
Let $j \in \{1,\ldots,N\}$ and $(\ujoeps, \rjoeps) \in H^1(\oe) \times L^{\infty}(\Omega)$ be given such that $ \epsilon \uR \le \rjoeps \le \epsilon \oR $, and $\rjoeps $ is constant on every  cell $\epsilon (Y + {\bf k})$, ${\bf k} \in K_{\epsilon}$. For $\dt$ small enough, there exists a  weak solution $(\ujeps,\rjeps)$ of the time-discrete problem from Definition \ref{DefTimeDiscModel}. 
\end{proposition}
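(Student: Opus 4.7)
The plan is to exploit the key decoupling observed in Remark \ref{rem:Linear_TimeDiscrete}: the equation \eqref{VarEquTimeDiscModelR} for $\rjeps$ does not involve the unknown $\ujeps$, while \eqref{VarEquTimeDiscModel} is then linear in $\ujeps$. Accordingly, I would first define
$\rjeps(x) := \rjoeps(x) + \dt\, G_{\epsilon}^{j-1}(x)$
pointwise on $\Roe$, which is meaningful because $G_{\epsilon}^{j-1}$ depends only on data at time $t^{j-1}$. Proposition \ref{PropEstimateDiscTimeReps} (applicable under the smallness condition $\dt \le \delta_0/\|g\|_{L^{\infty}}$) then yields directly that $\rjeps \in L^{\infty}(\Roe)$, is constant on every microcell $\epsilon(Y + {\bf k})$, satisfies $\epsilon\uR \le \rjeps \le \epsilon\oR$, and obeys \eqref{EstimateDiscreteTimeDerivReps}. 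With $\rjeps$ in hand, Lemma \ref{LemmaEstimatesTimeDisCoeff} ensures that the coefficients $\jjeps$, $D_{\epsilon}^j$, $V_{\epsilon}^j$, $f_{\epsilon}^j$ appearing in \eqref{VarEquTimeDiscModel} are all well-defined and satisfy uniform bounds.

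The second step is to recast \eqref{VarEquTimeDiscModel} as a linear variational problem on $H^1(\oe)$. Using the algebraic identity $\frac{\jjeps}{\dt} - \frac{\jjeps - \jjoeps}{\dt} = \frac{\jjoeps}{\dt}$, I would collect the terms involving $\ujeps$ on the left-hand side to obtain $a(\ujeps,\phi) = L(\phi)$ for all $\phi \in H^1(\oe)$, with
\begin{align*}
a(u,\phi) &= \int_{\oe}\frac{\jjoeps}{\dt}\,u\,\phi\,dx + \int_{\oe} D_{\epsilon}^j\nabla u \cdot \nabla\phi\,dx - \int_{\oe} V_{\epsilon}^j \cdot \nabla u\,\phi\,dx, \\
L(\phi) &= \int_{\oe}\frac{\jjoeps}{\dt}\,\ujoeps\,\phi\,dx + \int_{\oe}\jjeps f_{\epsilon}^j\phi\,dx - \int_{\geps} G_{\epsilon}^{j-1}(\ujoeps - \rho)\,\jjeps\,\phi\,d\sigma.
\end{align*}
Continuity of $a$ on $H^1(\oe)\times H^1(\oe)$ is immediate from \eqref{EstimatesTimeDiscCoeff}. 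Continuity of $L$ follows from Assumption \ref{AssumptionRechteSeitef}, the bound on $G_{\epsilon}^{j-1}$ coming from $g\in L^{\infty}$ in Assumption \ref{AssumptionODEg}, and the trace inequality $H^1(\oe) \hookrightarrow L^2(\geps)$ applied to $\phi$ and to $\ujoeps$.

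The central technical point is coercivity of $a$. Combining Assumption \ref{AssumptionDiffusion} with the lower bound on $\jjeps$ in \eqref{EstimatesTimeDiscCoeffJ} and the uniform upper bound on $F_{\epsilon}^j$ in \eqref{EstimatesTimeDiscCoeffSeps} via the inequality \eqref{BoundInverseMatrix}, one obtains a uniform constant $c_0^{\ast}>0$ with $D_{\epsilon}^j(x)\xi\cdot\xi \ge c_0^{\ast}|\xi|^2$ for almost every $x$ and every $\xi\in\R^n$. The lower bound $\jjoeps \ge (\uR/\oR)^{n-1}$ controls the zero-order term, and the convection contribution is absorbed via Young's inequality: for any $\eta>0$,
\begin{align*}
\left|\int_{\oe} V_{\epsilon}^j \cdot \nabla u\,u\,dx\right| \le \eta\|\nabla u\|_{L^2(\oe)}^2 + \frac{\|V_{\epsilon}^j\|_{L^{\infty}(\oe)}^2}{4\eta}\|u\|_{L^2(\oe)}^2.
\end{align*}
Choosing $\eta = c_0^{\ast}/2$ yields $a(u,u) \ge \frac{c_0^{\ast}}{2}\|\nabla u\|_{L^2(\oe)}^2 + \left(\frac{(\uR/\oR)^{n-1}}{\dt} - C\right)\|u\|_{L^2(\oe)}^2$, where $C$ depends only on $c_0^{\ast}$ and the uniform bound on $\|V_{\epsilon}^j\|_{L^{\infty}}$ in \eqref{EstimatesTimeDiscCoeffDV}. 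For $\dt$ sufficiently small (independent of $\epsilon$) the parenthesized factor is positive, and coercivity follows.

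Applying the Lax--Milgram lemma delivers a unique $\ujeps \in H^1(\oe)$ solving $a(\ujeps,\phi) = L(\phi)$, and the pair $(\ujeps,\rjeps)$ is the desired weak solution. The main obstacle is simply the bookkeeping ensuring that the smallness threshold on $\dt$ is independent of $\epsilon$; this is guaranteed because every constant appearing in Lemma \ref{LemmaEstimatesTimeDisCoeff} and in Proposition \ref{PropEstimateDiscTimeReps} is $\epsilon$-uniform. Notably, no nonlinear fixed-point argument is required here, precisely because the decoupling eliminates any implicit coupling between $\rjeps$ and $\ujeps$ at the same time level.
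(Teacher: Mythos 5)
Your proposal is correct and follows essentially the same route as the paper's own proof: define $\rjeps$ directly via \eqref{VarEquTimeDiscModelR}, invoke Proposition \ref{PropEstimateDiscTimeReps} and Lemma \ref{LemmaEstimatesTimeDisCoeff} for the $\epsilon$-uniform bounds on $\rjeps$ and the coefficients, recast \eqref{VarEquTimeDiscModel} as the linear problem $a(\ujeps,\phi)=L(\phi)$ with the zero-order coefficient $\jjoeps/\dt$, establish coercivity of $a$ by absorbing the convection term with Young's inequality for $\dt$ small, and conclude by Lax--Milgram. The only minor differences are cosmetic: you state the coercivity of $D_{\epsilon}^j$ in the (correct) form with $|\xi|^2$ and unpack the Young estimate a bit more explicitly than the paper does.
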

\begin{proof}
As mentioned in Remark \ref{rem:Linear_TimeDiscrete}, the existence of $\rjeps$ is straightforward, while $\ujeps$ is the weak solution of a linear problem. Moreover, from the proof of Proposition \ref{PropEstimateDiscTimeReps}, one gets that choosing $\dt \leq \delta_0/\|g\|_{L^{\infty}(\R \times \R)}$ guarantees that the estimates there, and therefore in Lemma \ref{LemmaEstimatesTimeDisCoeff} as well, are valid. The variational equation $\eqref{VarEquTimeDiscModel}$ can be rewritten as 
\begin{align*}
a_{\epsilon}^j(\ujeps,\phi) = l_{\epsilon}^j(\phi) \qquad \mbox{for all } \phi \in H^1(\oe),
\end{align*}
with $a_{\epsilon}^j: H^1(\oe)\times H^1(\oe) \rightarrow \R$ and $l_{\epsilon}^j : H^1(\oe) \rightarrow \R$ defined by 
\begin{align*}
a_{\epsilon}^j(u,\phi)&:= \int_{\oe} \frac{{\jeps^{j-1}}}{\dt} u \phi dx + \int_{\oe} D_{\epsilon}^j\nabla u \cdot \nabla \phi dx - \int_{\oe} V_{\epsilon}^j\cdot \nabla u \phi dx,
\\
l_{\epsilon}^j(\phi)&:= \int_{\oe} \frac{\jjoeps}{\dt} \ujoeps \phi dx + \int_{\oe} \jjeps f_{\epsilon}^j \phi dx - \int_{\geps} G_{\epsilon}^{j-1} \big(\ujoeps - \rho\big) \jjeps \phi d\sigma.
\end{align*}
Obviously, $a_{\epsilon}^j$ is bilinear and $l_{\epsilon}^j$ is linear, and both are continuous. To apply the Lax-Milgram Lemma we have to prove the coercivity of the bilinear-form $a_{\epsilon}^j$. Since $\jeps^{j-1}$ is bounded away from 0 (uniformly w.r.t. $\epsilon$ and $j$) and $D$ is coercive, we get the coercivity of $D_{\epsilon}^j$, \ie there exists a constant $d>0$, such that for every $x\in \oe$ it holds that
\begin{align*}
D_{\epsilon}^j(x) \xi \cdot \xi \geq d \quad \mbox{ for all } \xi \in \R^n.
\end{align*} 
Further, the essential boundedness of $V_{\epsilon}^j$ implies the  existence of a constant $C_1>0$, such that
\begin{align*}
-\int_{\oe} u \ V_{\epsilon}^j \cdot \nabla u dx \geq - C_1 \|u \|_{L^2(\oe)}^2 - \frac{d}{2} \|\nabla u\|^2_{L^2(\oe)}.
\end{align*}
Together, we obtain for all $u\in H^1(\oe)$
\begin{align*}
a_{\epsilon}^j(u,u) \geq \int_{\oe} \left[\frac{\jeps^{j-1}}{\dt} - C_1\right] |u|^2 dx + \frac{d}{2} \|\nabla u \|^2_{L^2(\oe)}.
\end{align*}
Recalling the lower bounds for $\jeps^{j-1}$, which are uniform with respect to $j$, this implies the coercivity of $a_{\epsilon}^j$ for $\dt$ small enough, and the claim is proved.
\end{proof}

\begin{proposition}\label{AprioriEstimateDiscreteSolutionUeps}
Assume $\dt$ small enough. Then, a $C > 0$ not depending on of $\epsilon $ and $\dt$ exists such that the sequence of time-discrete solutions $(\ujeps,\rjeps)$ ($j=1,\ldots,N$) satisfies the estimates  
\begin{align*}
\max_{j\in \{1,\ldots,N\}} \|\ujeps\|_{L^2(\oe)} + \dt \sum_{j=1}^N \|\nabla \ujeps\|^2_{L^2(\oe)} \le C . 
\end{align*}
\end{proposition}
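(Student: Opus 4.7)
The strategy is the standard energy estimate: test \eqref{VarEquTimeDiscModel} with $\phi = \ujeps$, handle the discrete time derivative by an algebraic telescoping trick, control the remaining terms uniformly in $\epsilon$ and $\dt$ via Lemma \ref{LemmaEstimatesTimeDisCoeff} and a scaled trace inequality, and finish with a discrete Gronwall argument.

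First I would exploit the useful cancellation: by the product rule for differences,
\[
\int_{\oe} \frac{\jjeps\ujeps - \jjoeps\ujoeps}{\dt}\ujeps\,dx
- \int_{\oe} \ujeps\,\frac{\jjeps - \jjoeps}{\dt}\ujeps\,dx
= \int_{\oe} \jjoeps\,\frac{\ujeps - \ujoeps}{\dt}\ujeps\,dx,
\]
so after moving the $\ujeps \partial_t^{\dt}\jjeps$-term to the left, the elementary inequality $(a-b)a\ge \tfrac12(a^2-b^2)$ (applied pointwise with weight $\jjoeps>0$) yields
\[
\int_{\oe}\jjoeps\frac{\ujeps-\ujoeps}{\dt}\ujeps\,dx
\ge \frac{1}{2\dt}\int_{\oe}\jjoeps\bigl[(\ujeps)^2-(\ujoeps)^2\bigr]\,dx.
\]
Writing $\jjoeps(\ujeps)^2 = \jjeps(\ujeps)^2 - (\jjeps-\jjoeps)(\ujeps)^2$ and using \eqref{EstimatesTimeDiscCoeffDtJ} to bound $|\jjeps-\jjoeps|\le C\dt$ converts the above lower bound into a telescoping expression $\tfrac{1}{2\dt}\int_{\oe}[\jjeps(\ujeps)^2-\jjoeps(\ujoeps)^2]\,dx$ modulo a harmless $C\|\ujeps\|_{L^2}^2$ term.

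Next I would bound the remaining terms, all uniformly in $\epsilon,\dt$ thanks to Lemma \ref{LemmaEstimatesTimeDisCoeff}: the coercivity of $D$ together with the uniform lower bound on $\jjeps$ gives $\int_{\oe}D_{\epsilon}^j\nabla\ujeps\cdot\nabla\ujeps\,dx\ge d\,\|\nabla\ujeps\|_{L^2(\oe)}^2$; the convection and source terms are controlled by $\|V_{\epsilon}^j\|_{L^{\infty}}\le C$, $\|\jjeps f_{\epsilon}^j\|_{L^{\infty}}\le C$ and Young's inequality, absorbing a small multiple of $\|\nabla\ujeps\|^2_{L^2}$ into the diffusive part. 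The only delicate term is the boundary integral
\[
I_{\mathrm{bdry}} \;=\; \int_{\geps} G_{\epsilon}^{j-1}(\ujoeps-\rho)\,\ujeps\,\jjeps\,d\sigma.
\]
From Assumption \ref{AssumptionODEg} and the definition of $G_{\epsilon}^{j-1}$ in \eqref{DefVepsDeps_j} one has $|G_{\epsilon}^{j-1}|\le \epsilon\|g\|_{L^{\infty}}$, and the standard $\epsilon$-scaled trace inequality on $\oe$ reads
\[
\|v\|_{L^2(\geps)}^2 \;\le\; C\bigl(\epsilon^{-1}\|v\|_{L^2(\oe)}^2 + \epsilon\,\|\nabla v\|_{L^2(\oe)}^2\bigr)
\qquad \text{for all } v\in H^1(\oe).
\]
Splitting $I_{\mathrm{bdry}}$ into the $\rho$-part (which is $O(\epsilon)\cdot|\geps|^{1/2}\|\ujeps\|_{L^2(\geps)}$ with $|\geps|\sim \epsilon^{-1}$) and the bilinear part $O(\epsilon)\|\ujoeps\|_{L^2(\geps)}\|\ujeps\|_{L^2(\geps)}$, and applying the above trace inequality, Cauchy--Schwarz, and Young's inequality, one obtains
\[
|I_{\mathrm{bdry}}|\;\le\;\eta\bigl(\|\nabla\ujeps\|^2_{L^2(\oe)}+\epsilon^2\|\nabla\ujoeps\|^2_{L^2(\oe)}\bigr)+C\bigl(1+\|\ujeps\|^2_{L^2(\oe)}+\|\ujoeps\|^2_{L^2(\oe)}\bigr),
\]
with $\eta$ arbitrarily small. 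This is the crux of the argument, because it is exactly the factor $\epsilon$ in $G_{\epsilon}^{j-1}$ that compensates the $\epsilon^{-1}$ singularity in the trace scaling.

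Finally I would multiply by $\dt$ and sum over $j=1,\dots,m$ for any $m\le N$. The discrete time derivative telescopes into $\tfrac12\int_{\oe}\jeps^m(\ueps^m)^2\,dx - \tfrac12\int_{\oe}\jeps^0(\ueps^0)^2\,dx$, the diffusive contribution yields $\tfrac{d}{2}\dt\sum_{j=1}^m\|\nabla\ujeps\|^2_{L^2(\oe)}$ (after absorbing the $\eta$-terms), and using $\jeps^m\ge(\uR/\oR)^{n-1}>0$ and Assumption \ref{AssumptionInitialValueUeps} one arrives at an inequality of the form
\[
\|\ueps^m\|_{L^2(\oe)}^2 + \dt\sum_{j=1}^m\|\nabla\ujeps\|_{L^2(\oe)}^2 \;\le\; C + C\,\dt\sum_{j=0}^m\|\ujeps\|_{L^2(\oe)}^2.
\]
A discrete Gronwall inequality then yields the claim. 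I expect the boundary estimate above to be the only non-routine step; everything else is bookkeeping enabled by the uniform bounds already collected in Lemma \ref{LemmaEstimatesTimeDisCoeff} and Proposition \ref{PropEstimateDiscTimeReps}.
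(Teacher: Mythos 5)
Your proposal is correct and follows the same overall energy-estimate strategy as the paper: test with $\dt\,\ujeps$, extract a telescoping expression in $\jeps |\ujeps|^2$, bound the lower-order and convection terms via Lemma \ref{LemmaEstimatesTimeDisCoeff} and Young, handle the boundary term with the scaled trace inequality using the crucial $\epsilon$-gain from $G_{\epsilon}^{j-1}$, and close with discrete Gronwall. The one genuine difference is in the treatment of the discrete time-derivative. The paper keeps the term $\int_{\oe}\ujeps\frac{\jjeps-\jjoeps}{\dt}\ujeps\,dx$ on the right-hand side as a separate term $B_{\epsilon}^3$ and then performs a "complete the square" decomposition of $\int_{\oe}(\jjeps\ujeps-\jjoeps\ujoeps)\ujeps\,dx$ in the weighted variables $\sqrt{\jjeps}\ujeps$, producing an extra positive defect term $\tfrac12\|\sqrt{\jjeps}\ujeps-\sqrt{\jjoeps}\ujoeps\|^2$ and a mixed error $B_{\epsilon}^5$ that is controlled by the Lipschitz continuity of $\sqrt{\cdot}$ away from $0$. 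You instead move the $\ujeps\partial_t^{\dt}\jjeps$ term to the left first, so that the two terms combine into $\int_{\oe}\jjoeps\frac{\ujeps-\ujoeps}{\dt}\ujeps\,dx$, and then apply the scalar inequality $(a-b)a\ge\tfrac12(a^2-b^2)$ with weight $\jjoeps$; the remaining error is controlled directly by $|\jjeps-\jjoeps|\le C\dt$ without invoking the square root. Your route is marginally cleaner (no $\sqrt{\cdot}$-Lipschitz step) and gives an equivalent telescoping bound, at the price of discarding the positive defect term $\|\sqrt{\jjeps}\ujeps-\sqrt{\jjoeps}\ujoeps\|^2$ that the paper records (but never actually uses in the sequel). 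Everything else — coercivity after division by $\jjeps>c_0$, the scaled trace estimate with $|G_{\epsilon}^{j-1}|\le C\epsilon$ compensating $|\geps|\sim\epsilon^{-1}$, absorption into the diffusive term, Assumption \ref{AssumptionInitialValueUeps} for the initial gradient, and discrete Gronwall — matches the paper.
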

\begin{proof}
We take $\dt \ujeps $ as test function in $\eqref{VarEquTimeDiscModel}$ to obtain
\begin{align*}
A_{\epsilon}^1 + A_{\epsilon}^2:&= \int_{\oe} \big(\jjeps \ujeps  - \jjoeps \ujoeps \big) \ujeps dx + \dt \int_{\oe} D_{\epsilon}^j \nabla \ujeps \cdot \nabla \ujeps dx
\\
&= \dt \int_{\oe} V_{\epsilon}^j \cdot \nabla \ujeps  \ujeps dx + \dt \int_{\oe} \jjeps f_{\epsilon}^j \ujeps dx +\dt \int_{\oe} \big(\ujeps\big)^2 \frac{\jjeps - \jjoeps}{\dt} dx 
\\
& \hspace{2em} - \dt\int_{\geps} G_{\epsilon}^{j-1} \big(\ujoeps - \rho \big) \ujeps \jjeps d\sigma
\\
&=: \sum_{i=1}^4 B_{\epsilon}^i.
\end{align*}
For the first term on the left-hand side we have
\begin{align*}
A_{\epsilon}^1=& \frac12 \big\|\sqrt{\jjeps} \ujeps \big\|^2_{L^2(\oe)} + \frac12 \big\|\sqrt{\jjeps } \ujeps - \sqrt{\jjoeps} \ujoeps \big\|^2_{L^2(\oe)} 
\\
&- \frac12 \big\|\sqrt{\jjoeps} \ujoeps \big\|^2_{L^2(\oe)}
 - \int_{\oe} \sqrt{\jjoeps} \big( \sqrt{\jjoeps} - \sqrt{\jjeps}\big) \ujeps \ujoeps dx.
\end{align*}
We denote the last term above by $B_{\epsilon}^5$. Now, using the coercivity of $D_{\epsilon}^j$ as in the proof of Proposition \ref{ExistenceTimDisProb}, we obtain
\begin{align*}
A_{\epsilon}^2 \geq \dt \ d \ \big\| \nabla \ujeps\big\|^2_{L^2(\oe)}.
\end{align*}
To estimate the terms $B_{\epsilon}^i$ ($i = 1, \dots, 5$), we first observe that 
\begin{align*}
|B_{\epsilon}^1| \le \|V_{\epsilon}^j\|_{L^{\infty}(\oe)} \dt \|\nabla \ujeps\|_{L^2(\oe)} \|\ujeps\|_{L^2(\oe)} \le C\dt \|\ujeps\|^2_{L^2(\oe)} + \frac{d \dt}{4} \|\nabla \ujeps\|^2_{L^2(\oe)}.
\end{align*}
For the second and the third term, we use  Lemma \ref{LemmaEstimatesTimeDisCoeff} to obtain 
\begin{align*}
|B_{\epsilon}^2| &\le C\dt \big( \|f_{\epsilon}^j\|_{L^2(\oe)}^2 + \|\ujeps \|^2_{L^2(\oe)} \big),
\\
|B_{\epsilon}^3| &\le C \dt \|\ujeps \|^2_{L^2(\oe)} .
\end{align*}
For $B_{\epsilon}^4$ we use that $\|G_{\epsilon}^{j-1}\|_{L^{\infty}(\geps)} \le C \epsilon$, and the scaled trace inequality, 
\begin{align}\label{ScaledTraceInequality}
\epsilon \|\ueps \|_{L^2(\geps)}^2 \le C \|\ueps\|^2_{L^2(\oe)} + \frac{d}{4} \epsilon^2 \|\nabla \ueps \|^2_{L^2(\oe)} 
\end{align}
for all $\ueps \in H^1(\oe)$, which can be obtained by a standard decomposition argument for $\oe$. Then, for $\epsilon \le 1$, 
\begin{align*}
|B_{\epsilon}^4| &\le C  \dt \left(1 +  \epsilon\|\ujeps\|^2_{L^2(\geps)}+ \epsilon\|\ujoeps\|^2_{L^2(\geps)}\right)
\\
&\le C \dt \left( 1 + \|\ujeps\|^2_{L^2(\oe)} + \|\ujoeps\|^2_{L^2(\oe)} \right) + \frac{d \dt}{4} \left( \|\nabla \ujeps\|^2_{L^2(\oe)}  + \|\nabla \ujoeps\|^2_{L^2(\oe)} \right).
\end{align*}
For the last term $B_{\epsilon}^5$ occuring in the term $A_{\epsilon}^1$, we use the Lipschitz continuity of $\sqrt{\cdot}$ away from $0$, to obtain with $\eqref{EstimatesTimeDiscCoeffDtJ}$
\begin{align*}
|B_{\epsilon}^5| &\le C \dt \int_{\oe} \frac{\left|\sqrt{\jjeps} - \sqrt{\jjoeps}\right|}{\dt} |\ujeps| | \ujoeps| dx 
\\
&\le C \dt \int_{\oe} \frac{\left|\jjeps - \jjoeps \right|}{\dt} \cdot \big( |\ujeps|^2 + |\ujoeps|^2 \big) dx 
\\
&\le C\dt \left(\|\ujeps\|^2_{L^2(\oe)} + \|\ujoeps\|^2_{L^2(\oe)}\right).
\end{align*}
Altogether, we obtain
\begin{align*}
\frac12 &\big\|\sqrt{\jjeps} \ujeps \big\|^2_{L^2(\oe)} + \frac12 \big\|\sqrt{\jjeps } \ujeps - \sqrt{\jjoeps} \ujoeps \big\|^2_{L^2(\oe)}
\\
 -& \frac12 \big\|\sqrt{\jjoeps} \ujoeps \big\|^2_{L^2(\oe)} + \frac{d \dt}{2} \|\nabla \ujeps \|^2_{L^2(\oe)}
\\
&\le C \dt \left( 1 + \|f_{\epsilon}^j\|^2_{L^2(\oe)} + \|\ujoeps\|^2_{L^2(\oe)} + \|\ujeps\|^2_{L^2(\oe)}\right)  + \frac{d \dt }{4} \Vert \nabla \ujoeps \Vert^2_{L^2(\oe)}.
\end{align*}
Summing up these inequalities over $j=1$ to an arbitrary $k \in \{1,\ldots,N\}$, we use again the boundedness of $\jjeps$ proved above and Assumption \ref{AssumptionInitialValueUeps} to obtain that, for $\dt $ small enough, 
\begin{align*}
\|\ueps^k &\|_{L^2(\oe)}^2 + \sum_{j=1}^k  \big\|\sqrt{\jjeps } \ujeps - \sqrt{\jjoeps} \ujoeps \big\|^2_{L^2(\oe)} + \dt \sum_{j=1}^k \|\nabla \ujeps\|^2_{L^2(\oe)} 
\\
\le& C\left(1 +  \|\uepsin\|^2_{L^2(\oe)} + \dt \sum_{j=1}^k \|f_{\epsilon}^j\|^2_{L^2(\oe)} + \dt \sum_{j=1}^{k} \|\ujeps\|^2_{L^2(\oe)} + \dt \Vert \nabla \uepsin \Vert^2_{L^2(\oe)} \right)
\\
\le& C\left(1 + \|f\|_{C^0(\overline{\Omega},L^2(\Omega))} + \dt \sum_{j=1}^{k-1} \|\ujeps\|^2_{L^2(\oe)}\right).
\end{align*}
Now, the discrete Gronwall-inequality implies the desired result.
\end{proof}

\subsection{The interpolation in time}

In this section, we define the piecewise constant and piecewise linear interpolation of the discrete values with respect to time. 
Let  $(\ujeps,\rjeps)$ for $j \in \{1,\ldots,N\}$ be a weak solution of the time-discrete problem from Proposition \ref{ExistenceTimDisProb} with initial condition $(\uepsin,\repsin)$. For $t \in (t^{j-1},t^j]$ with $j =1,\ldots,N$, we define:
\begin{align*}
\ubteps(t)&:= \ujeps, &\uhteps(t)&:= \ujoeps + \frac{t - t^{j-1}}{\dt} \left( \ujeps - \ujoeps \right),
\\
\rbteps(t) &:= \rjeps , &\rhteps(t)&:= \rjoeps + \frac{t - t^{j-1}}{\dt} \left( \rjeps - \rjoeps\right),
\\
\jbteps(t) &:= \jjeps, &\jhteps(t)&:= \jjoeps + \frac{t - t^{j-1}}{\dt}\left(\jjeps - \jjoeps\right),
\\
\wbteps(t) &:= \jjeps \ujeps, &\whteps(t) &:= \jjoeps \ujoeps + \frac{t - t^{j-1}}{\dt}\left( \jjeps \ujeps - \jjoeps \ujoeps \right).
\end{align*}
For $t<0$ we extend the functions constantly by the initial value.
We emphasize that for the existence  proof of the continuous problem $\eqref{MicProbFixedDomain}$ it is not necessary to consider the interpolations $\wbteps$ and $\whteps$. However, we use these functions to show the boundedness of $\partial_t (\jeps \ueps)$ in $L^2((0,T),H^1(\oe)')$ uniformly with respect to $\epsilon$, what is obtained directly from the \textit{a priori} estimates of $\partial_t \whteps$. We are not able to prove such a uniform bound for $\partial_t \ueps$. This is caused by the fact that the gradient of $\jjeps$ is of order $\epsilon^{-1}$, \ie we have
\begin{align}\label{EstimateGradJeps}
\|\nabla \jjeps\|_{L^{\infty}(\oe)} \le C\epsilon^{-1} \quad \mbox{ for all } j \in \{1,\ldots,N\}.
\end{align}
This can be easily seen from the definition of $\jjeps$ and the results from Proposition \ref{PropEstimateDiscTimeReps}. Let us start with the \textit{a priori} estimates for $\wbteps$ and $\whteps$:

\begin{lemma}\label{AprioriEstimatesWeps}
The functions $\wbteps$ and $\whteps$ fulfill the following \textit{a priori} estimates:
\begin{align*}
\big\|\wbteps\big\|_{L^{\infty}((0,T),L^2(\oe))} + \epsilon \big\|\nabla \wbteps\big\|_{L^2((0,T),L^2(\oe))} &\le C,
\\
\big\|\whteps \big\|_{L^{\infty}((0,T),L^2(\oe))} + \epsilon \big\|\nabla \whteps \big\|_{L^2((0,T),L^2(\oe))} + \big\|\partial_t \whteps \big\|_{L^2((0,T),H^1(\oe)')} &\le C.
\end{align*}
\end{lemma}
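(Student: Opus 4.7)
The plan is to derive both estimates by exploiting the pointwise coefficient bounds in Lemma \ref{LemmaEstimatesTimeDisCoeff}, the uniform $\dt$-summed bounds in Proposition \ref{AprioriEstimateDiscreteSolutionUeps}, and critically the growth rate \eqref{EstimateGradJeps} of $\nabla \jjeps$, which is what forces the $\epsilon$-prefactor on the gradient norms. The strategy is purely deterministic: read off each piece from the discrete variational identity \eqref{VarEquTimeDiscModel} and perform an $\epsilon$-careful product rule / scaled trace calculation.

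For $\wbteps$, on $(t^{j-1},t^j]$ one has $\wbteps=\jjeps\ujeps$, so the $L^\infty L^2$ bound is immediate from $\|\jjeps\|_{L^\infty}\le C$ and $\max_j\|\ujeps\|_{L^2(\oe)}\le C$. Expanding $\nabla\wbteps=\nabla\jjeps\,\ujeps+\jjeps\nabla\ujeps$ and using \eqref{EstimateGradJeps}, the $\epsilon$-prefactor cancels the $\epsilon^{-1}$ blow-up of $\|\nabla\jjeps\|_{L^\infty}$, giving $\epsilon^2\int_0^T\|\nabla\wbteps\|_{L^2(\oe)}^2\,dt\le C\dt\sum_j\|\ujeps\|_{L^2(\oe)}^2+C\epsilon^2\dt\sum_j\|\nabla\ujeps\|_{L^2(\oe)}^2$, both controlled by Proposition \ref{AprioriEstimateDiscreteSolutionUeps}. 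The same estimates for $\whteps$ follow because $\whteps(t)$ is a convex combination in $t$ of the two adjacent constant values of $\wbteps$, so both the $L^2$-norm and its gradient are dominated pointwise by those of $\wbteps$ shifted by $\dt$.

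The main work is the $H^1(\oe)'$-bound on $\partial_t\whteps$. On $(t^{j-1},t^j)$ one has $\partial_t\whteps=(\jjeps\ujeps-\jjoeps\ujoeps)/\dt$, and \eqref{VarEquTimeDiscModel} identifies this with the functional $\phi\mapsto \int_{\oe}V_\epsilon^j\cdot\nabla\ujeps\,\phi-\int_{\oe}D_\epsilon^j\nabla\ujeps\cdot\nabla\phi+\int_{\oe}\jjeps f_\epsilon^j\phi+\int_{\oe}\ujeps\,\frac{\jjeps-\jjoeps}{\dt}\phi-\int_{\geps}G_\epsilon^{j-1}(\ujoeps-\rho)\jjeps\phi\,d\sigma$ acting on $\phi\in H^1(\oe)$. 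The first four terms are bounded by $C(\|\ujeps\|_{H^1(\oe)}+\|f_\epsilon^j\|_{L^2(\oe)})\|\phi\|_{H^1(\oe)}$, using \eqref{EstimatesTimeDiscCoeffDV}, \eqref{EstimatesTimeDiscCoeffDtJ}, and Assumption \ref{AssumptionRechteSeitef}. Squaring and $\dt$-summing then reduces, via Proposition \ref{AprioriEstimateDiscreteSolutionUeps}, to a uniform constant.

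The delicate piece will be the boundary contribution. Here I will combine the sharp bound $\|G_\epsilon^{j-1}\|_{L^\infty(\geps)}\le C\epsilon$ (this is exactly \eqref{EstimateDiscreteTimeDerivReps}, a consequence of Assumption \ref{AssumptionODEg}) with the scaled trace inequality \eqref{ScaledTraceInequality}, applied both to $\ujoeps-\rho$ and to $\phi$, to absorb the surface measure growth $|\geps|=O(\epsilon^{-1})$. This yields $|\int_{\geps}G_\epsilon^{j-1}(\ujoeps-\rho)\jjeps\phi\,d\sigma|\le C\epsilon\|\ujoeps-\rho\|_{L^2(\geps)}\|\phi\|_{L^2(\geps)}\le C(1+\|\ujoeps\|_{H^1(\oe)})\|\phi\|_{H^1(\oe)}$, where the constant term comes from $\epsilon\|\rho\|_{L^2(\geps)}^2=O(1)$. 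Squaring, integrating in time, and shifting the $\ujoeps$-sum by one step (which only adds the bounded initial contribution via Assumption \ref{AssumptionInitialValueUeps}) closes the estimate uniformly in $\epsilon$ and $\dt$.
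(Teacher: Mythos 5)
Your proof is correct and follows essentially the same strategy as the paper: dispose of the non-time-derivative bounds via the coefficient estimates of Lemma \ref{LemmaEstimatesTimeDisCoeff}, the $\dt$-summed bounds of Proposition \ref{AprioriEstimateDiscreteSolutionUeps}, and the growth rate \eqref{EstimateGradJeps}, then bound $\partial_t\whteps$ in $H^1(\oe)'$ by reading off the dual pairing from \eqref{VarEquTimeDiscModel} and treating the surface term with the scaled trace inequality applied to both $\ujoeps-\rho$ and the test function. The only difference is cosmetic — you spell out the product-rule cancellation for $\epsilon\|\nabla\wbteps\|$ and the convex-combination observation for $\whteps$, which the paper leaves implicit.
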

\begin{proof}
Except for the time derivative, all estimates follow directly from  Lemma \ref{LemmaEstimatesTimeDisCoeff}, Proposition \ref{AprioriEstimateDiscreteSolutionUeps}, and $\eqref{EstimateGradJeps}$.  Hence, we only give a detailed proof for the inequality of $\partial_t  \whteps$.
 Choose $\phi \in H^1(\oe)$ with $\|\phi\|_{H^1(\oe)} \le 1$ as a test function in $\eqref{VarEquTimeDiscModel}$, to obtain for $t\in (t^{j-1},t^j)$
\begin{align}\label{StartingEquationTimeDerWeps}
\begin{aligned}
\int_{\oe} \partial_t \whteps \phi dx =& - \int_{\oe} D_{\epsilon}^j \nabla \ujeps \cdot \nabla \phi dx + \int_{\oe} V_{\epsilon} \cdot \nabla \ujeps \phi dx + \int_{\oe} \jjeps f_{\epsilon}^j \phi dx 
\\
& +\int_{\oe} \ujeps \frac{\jjeps - \jjoeps}{\dt} \phi dx - \int_{\geps} G_{\epsilon}^{j-1}(\ujoeps - \rho ) \jjeps \phi d\sigma.
\end{aligned}
\end{align}
We only  consider in more detail the boundary term. From the trace inequality $\eqref{ScaledTraceInequality}$ we obtain
\begin{align*}
\|\phi\|_{L^2(\geps)} \le C \left(\frac{1}{\sqrt{\epsilon} } \|\phi \|_{L^2(\oe)} + \sqrt{\epsilon} \|\nabla \phi \|_{L^2(\oe)} \right) \le C \epsilon^{-\frac12}.
\end{align*}
Using again the trace inequality, the estimates $|\geps|\le C\epsilon^{-1}$ and $\Vert G_{\epsilon}^{j-1}\Vert_{L^{\infty}(\geps)} \le C\epsilon$, and Lemma \ref{LemmaEstimatesTimeDisCoeff},  we get
\begin{align*}
- \int_{\geps} G_{\epsilon}^{j-1}(\ujoeps - \rho ) \jjeps \phi d\sigma &\le C\epsilon \left(\|\ujoeps\|_{L^2(\oe)} \|\phi \|_{L^2(\geps)} + \epsilon^{-\frac12} \|\phi\|_{L^2(\geps)} \right)
\\
&\le C \left( \|\ujoeps\|_{L^2(\oe)} + \|\nabla \ujoeps\|_{L^2(\oe)} + 1 \right).
\end{align*}
Hence, from equation $\eqref{StartingEquationTimeDerWeps} $ we obtain using again Lemma \ref{LemmaEstimatesTimeDisCoeff} 
\begin{align*}
\big\|\partial_t \whteps \big\|_{H^1(\oe)'} \le C\left( 1 + \|\ujoeps\|_{L^2(\oe)} + \|\ujeps\|_{L^2(\oe)} + \|\nabla \ujeps \|_{L^2(\oe)} + \|f_{\epsilon}^j\|_{L^2(\oe)} \right).
\end{align*}
The \textit{a priori} estimates from Proposition \ref{AprioriEstimateDiscreteSolutionUeps} and the assumption on $f$ and the initial conditions imply
\begin{align*}
\big\|\partial_t \whteps\big\|^2_{L^2((0,T),H^1(\oe)')} \le C \dt\sum_{j=1}^N \left( 1  + \|\ujeps\|^2_{L^2(\oe)} + \|\nabla \ueps^{j-1} \|^2_{L^2(\oe)} + \|f_{\epsilon}^j\|^2_{L^2(\oe)} \right) \le C,
\end{align*}
what gives us the desired result.
\end{proof}

\begin{lemma}\label{AprioriEstimatesJepsDt}
The functions $\jbteps $ and $\jhteps$ fulfill the following \textit{a priori} estimates:
\begin{align*}
\big\|\jbteps \big\|_{L^{\infty}((0,T),C^0(\overline{\oe}))} + \epsilon \big\|\nabla \jbteps \big\|_{L^{\infty}((0,T),C^0(\overline{\oe}))} &\le C,
\\
\big\|\jhteps \big\|_{W^{1,\infty}((0,T),C^0(\overline{\oe}))} + \epsilon \big\|\nabla \jhteps \big\|_{C^0([0,T] \times \overline{\oe})}   &\le C.
\end{align*}
Additionally, both functions are bounded from below, \ie there exits a constant $c_0>0$, such that for almost every $t\in (0,T)$ and every $x \in \overline{\oe}$ it holds that
\begin{align*}
c_0 \le \jbteps (t,x) , \quad c_0 \le \jhteps (t,x).
\end{align*}
\end{lemma}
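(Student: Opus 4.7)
The plan is to unfold the definitions of $\jbteps$ and $\jhteps$ in terms of the time-discrete Jacobians $\jjeps$ and then invoke the pointwise bounds already collected in Lemma \ref{LemmaEstimatesTimeDisCoeff}, Proposition \ref{PropEstimateDiscTimeReps}, and \eqref{EstimateGradJeps}. Essentially nothing new has to be proved: the statement is a transfer of cell-wise and time-step-wise estimates into mixed space--time norms, so the work is purely bookkeeping.

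For $\jbteps$, note that on each subinterval $(t^{j-1},t^j]$ one has $\jbteps(t,\cdot) = \jjeps(\cdot)$. Taking suprema over $j\in\{1,\ldots,N\}$ in \eqref{EstimatesTimeDiscCoeffJ} immediately yields $\|\jbteps\|_{L^\infty((0,T),C^0(\overline{\oe}))}\le C$, and the lower bound $c_0 := (\uR/\oR)^{n-1}$ on $\jbteps$ follows from the same estimate. For the spatial gradient, since $\jjeps = \det F_\epsilon^j$ and $F_\epsilon^j(x) = I_n + \frac{\rjeps - \epsilon\oR}{\epsilon}[\nabla_y(\chi_0\nu_0)](x/\epsilon)$, differentiation of the determinant combined with the chain rule produces precisely one factor $1/\epsilon$ (this is exactly \eqref{EstimateGradJeps}); cell-wise constancy of $\rjeps$ from Proposition \ref{PropEstimateDiscTimeReps} ensures no further singularity enters through $\nabla\rjeps$. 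Taking the supremum over $j$ gives $\epsilon\|\nabla\jbteps\|_{L^\infty((0,T),C^0(\overline{\oe}))}\le C$.

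For $\jhteps$, the key observation is that on each $(t^{j-1},t^j]$ it is a convex combination of $\jjeps$ and $\jjoeps$. Hence the $L^\infty$ and gradient bounds transfer term by term from the results for $\jbteps$ via the triangle inequality. Likewise, the lower bound propagates: for $\lambda := (t-t^{j-1})/\dt\in[0,1]$,
\begin{equation*}
\jhteps(t,x) = (1-\lambda)\jjoeps(x) + \lambda\jjeps(x) \ge (\uR/\oR)^{n-1} =: c_0.
\end{equation*}
The essential time derivative is computed directly as $\partial_t\jhteps = (\jjeps-\jjoeps)/\dt$ on each subinterval, whose $C^0(\overline{\oe})$-norm is uniformly bounded by \eqref{EstimatesTimeDiscCoeffDtJ}; taking the essential supremum over $t$ yields the $W^{1,\infty}((0,T),C^0(\overline{\oe}))$-bound.

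I do not expect any real obstacle: the only nontrivial input is the scaling $\|\nabla\jjeps\|_{L^\infty(\oe)}\le C\epsilon^{-1}$, and this is already recorded in \eqref{EstimateGradJeps} with the justification that differentiating $\chi_0(\cdot/\epsilon)\nu_0(\cdot/\epsilon)$ produces the $\epsilon^{-1}$ factor while $\rjeps$ is piecewise constant on the $\epsilon$-cells. All other estimates are immediate consequences of taking suprema of the already established discrete-in-time bounds and of convex-combination arguments for the linear interpolant.
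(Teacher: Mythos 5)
Your proof is correct and follows exactly the argument the paper invokes (the paper simply states "easy consequence of \eqref{EstimateGradJeps} and Lemma \ref{LemmaEstimatesTimeDisCoeff}"): you spell out the piecewise-constant/linear structure, take suprema over the time steps, and use convexity of the linear interpolant to propagate the lower bound, with the time-derivative bound coming from \eqref{EstimatesTimeDiscCoeffDtJ}. No gaps.
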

\begin{proof}
This is an easy consequence of $\eqref{EstimateGradJeps}$ and Lemma \ref{LemmaEstimatesTimeDisCoeff}.
\end{proof}
\begin{lemma}\label{AprioriRepsDt}
We have  $\rbteps \in L^{\infty}((0,T)\times \Omega)$ and  $\rhteps \in W^{1,\infty}((0,T),L^2(\Omega))$ such that $\rhteps , \partial_t \rhteps \in L^{\infty}((0,T)\times \Omega)$. For almost every $t \in (0,T)$ the functions $\rbteps, \, \rhteps,$ and $\partial_t \rhteps $ are constant on every micro cell $\epsilon (Y + k)$ for $k\in K_{\epsilon}$. Further, we have the estimate
\begin{align*}
\big\|\rbteps\big\|_{L^{\infty}((0,T)\times \Omega )} + \big\|\rhteps \big\|_{L^{\infty}((0,T)\times \Omega)} 
+ \big\|\partial_t \rhteps \big\|_{L^{\infty}((0,T)\times \Omega)} \le C\epsilon.
\end{align*}
\end{lemma}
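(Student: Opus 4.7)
The plan is to reduce everything to the discrete bounds already established in Proposition \ref{PropEstimateDiscTimeReps}, and to exploit the fact that both interpolants at time $t \in (t^{j-1}, t^j]$ are built only from the two cell-wise constant, uniformly bounded values $\rjoeps$ and $\rjeps$. No new energy argument is needed; the whole lemma should follow by bookkeeping.

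First, I would handle $\rbteps$. By definition, $\rbteps(t,\cdot) = \rjeps$ on $(t^{j-1}, t^j]$, and Proposition \ref{PropEstimateDiscTimeReps} gives $\epsilon \uR \le \rjeps \le \epsilon \oR$ together with the fact that $\rjeps$ is constant on each cell $\epsilon(Y + {\bf k})$, provided $\rjoeps$ was (and the inductive base case holds thanks to Assumption \ref{AssumptionInitialValueReps}). A straightforward induction on $j$ therefore yields that $\rbteps(t,\cdot)$ is cell-wise constant for every $t$, and $\|\rbteps\|_{L^\infty((0,T)\times \Omega)} \le \epsilon \oR$.

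Next, for $\rhteps$, I would write it on each slab $(t^{j-1},t^j]$ as the convex combination
\begin{equation*}
\rhteps(t,x) = \Bigl(1 - \tfrac{t - t^{j-1}}{\dt}\Bigr) \rjoeps(x) + \tfrac{t - t^{j-1}}{\dt}\, \rjeps(x).
\end{equation*}
Since both $\rjoeps$ and $\rjeps$ are cell-wise constant and bounded between $\epsilon\uR$ and $\epsilon\oR$, the same holds for $\rhteps$, which gives the $L^\infty$-bound of order $\epsilon$ and the cell-wise constancy. Differentiating in $t$ on $(t^{j-1},t^j)$, we have $\partial_t \rhteps = (\rjeps - \rjoeps)/\dt$, which by \eqref{EstimateDiscreteTimeDerivReps} is bounded in $L^\infty(\Omega)$ by $C\epsilon$ uniformly in $j$, and is again cell-wise constant as a difference of cell-wise constant functions. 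This proves $\partial_t \rhteps \in L^\infty((0,T)\times \Omega)$ with the required bound.

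Finally, the membership $\rhteps \in W^{1,\infty}((0,T),L^2(\Omega))$ is immediate: the piecewise-linear (in $t$) structure with slopes bounded uniformly in $L^\infty(\Omega) \hookrightarrow L^2(\Omega)$ (here using the boundedness of $\Omega$) makes $\rhteps$ absolutely continuous in $t$ with values in $L^2(\Omega)$, with $\|\partial_t \rhteps\|_{L^\infty((0,T),L^2(\Omega))} \le C\epsilon$. I do not expect any real obstacle in this lemma; the only point to treat with a little care is to make sure the induction on $j$ for cell-wise constancy is triggered correctly by the initial data, which is guaranteed by Assumption \ref{AssumptionInitialValueReps}, and that the $\dt$-smallness requirement from Proposition \ref{PropEstimateDiscTimeReps} is maintained so that the bounds $\epsilon\uR \le \rjeps \le \epsilon\oR$ propagate.
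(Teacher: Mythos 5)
Your argument is correct and follows exactly the same route as the paper, which simply states that the lemma is a direct consequence of Proposition \ref{PropEstimateDiscTimeReps}; you have spelled out the bookkeeping (convex combination, difference quotient, induction on cell-wise constancy via the initial data) that the paper leaves implicit.
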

\begin{proof}
This follows directly from Proposition \ref{PropEstimateDiscTimeReps}.
\end{proof}

\begin{lemma}\label{AprioriUepsDt}
For $\ubteps$ and $\uhteps$ we have the following \textit{a priori} estimates:
\begin{align*}
\big\|\ubteps\big\|_{L^{\infty}((0,T),L^2(\oe))} + \big\|\nabla \ubteps \big\|_{L^2((0,T),L^2(\oe))} &\le C,
\\
\big\| \uhteps \big\|_{L^{\infty}((0,T),L^2(\oe))} + \big\| \nabla \uhteps \big\|_{L^2((0,T),L^2(\oe))} + \epsilon \big\|\partial_t \uhteps \big\|_{L^2((0,T),H^1(\oe)')} &\le C.
\end{align*}
\end{lemma}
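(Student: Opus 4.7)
The first line is immediate from the discrete bounds in Proposition \ref{AprioriEstimateDiscreteSolutionUeps}. Indeed, $\ubteps(t)=\ujeps$ on $(t^{j-1},t^j]$, so $\|\ubteps\|_{L^\infty((0,T),L^2(\oe))}=\max_j\|\ujeps\|_{L^2(\oe)}\le C$ and $\|\nabla\ubteps\|_{L^2((0,T),L^2(\oe))}^2=\dt\sum_j\|\nabla\ujeps\|_{L^2(\oe)}^2\le C$. The corresponding bounds for the piecewise linear interpolant $\uhteps$ follow from the triangle inequality applied to $\uhteps(t)=\ujoeps+\tfrac{t-t^{j-1}}{\dt}(\ujeps-\ujoeps)$, using the uniform bound on $\max_j\|\ujeps\|_{L^2(\oe)}$ for the $L^\infty(L^2)$ part and estimating $\int_{t^{j-1}}^{t^j}\|\nabla\uhteps\|_{L^2}^2\le C\dt(\|\nabla\ujeps\|_{L^2}^2+\|\nabla\ujoeps\|_{L^2}^2)$ and summing for the gradient part.

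The serious ingredient is the bound on $\partial_t\uhteps$. The plan is to transfer the available uniform estimate on $\partial_t\whteps$ from Lemma \ref{AprioriEstimatesWeps} to $\partial_t\uhteps$ via the algebraic identity
\begin{align*}
\partial_t\whteps \;=\; \jjeps\,\partial_t\uhteps + \ujoeps\,\frac{\jjeps-\jjoeps}{\dt},
\end{align*}
which holds on each $(t^{j-1},t^j)$ from the discrete product rule $\jjeps\ujeps-\jjoeps\ujoeps=\jjeps(\ujeps-\ujoeps)+\ujoeps(\jjeps-\jjoeps)$. Solving for $\partial_t\uhteps$ and testing against an arbitrary $\phi\in H^1(\oe)$ gives
\begin{align*}
\langle\partial_t\uhteps,\phi\rangle \;=\; \bigl\langle\partial_t\whteps,\tfrac{\phi}{\jjeps}\bigr\rangle \;-\; \int_{\oe}\ujoeps\,\frac{\jjeps-\jjoeps}{\dt}\,\frac{\phi}{\jjeps}\,dx.
\end{align*}

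The key multiplier estimate is
\begin{align*}
\|\phi/\jjeps\|_{H^1(\oe)} \;\le\; C\epsilon^{-1}\,\|\phi\|_{H^1(\oe)},
\end{align*}
which follows because $\jjeps$ is uniformly bounded above and below away from $0$ by Lemma \ref{AprioriEstimatesJepsDt}, while $\|\nabla\jjeps\|_{L^\infty(\oe)}\le C\epsilon^{-1}$ by \eqref{EstimateGradJeps}; the $\epsilon^{-1}$ enters only through the gradient term $\phi\nabla\jjeps/\jjeps^2$. Combined with Lemma \ref{AprioriEstimatesJepsDt} for the bound $\|(\jjeps-\jjoeps)/\dt\|_{L^\infty}\le C$ and the uniform bound on $\|\ujoeps\|_{L^2(\oe)}$ from Proposition \ref{AprioriEstimateDiscreteSolutionUeps}, this yields
\begin{align*}
\|\partial_t\uhteps\|_{H^1(\oe)'} \;\le\; C\epsilon^{-1}\bigl(\|\partial_t\whteps\|_{H^1(\oe)'}+1\bigr).
\end{align*}
Squaring, integrating in time, and invoking the uniform bound $\|\partial_t\whteps\|_{L^2((0,T),H^1(\oe)')}\le C$ from Lemma \ref{AprioriEstimatesWeps} gives $\epsilon\|\partial_t\uhteps\|_{L^2((0,T),H^1(\oe)')}\le C$, as claimed.

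The main obstacle is precisely the $\epsilon^{-1}$ loss in the multiplier estimate $\|\phi/\jjeps\|_{H^1}\le C\epsilon^{-1}\|\phi\|_{H^1}$: it is unavoidable because $\nabla\jjeps$ oscillates on the $\epsilon$-scale, and it is exactly this loss that forces the $\epsilon$-weight in front of $\|\partial_t\uhteps\|_{L^2(H^1(\oe)')}$ and motivates working with $\partial_t(\jeps\ueps)$ rather than $\partial_t\ueps$ in the subsequent two-scale limit passage.
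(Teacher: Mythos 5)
Your proof is correct and follows essentially the same route as the paper: the non-time-derivative bounds are read off directly from Proposition \ref{AprioriEstimateDiscreteSolutionUeps}, and the time-derivative bound is obtained via the same algebraic identity $\partial_t\uhteps=\jjeps^{-1}\partial_t\whteps-\jjeps^{-1}\ujoeps(\jjeps-\jjoeps)/\dt$, the same duality argument, and the same $\epsilon^{-1}$ loss in the multiplier estimate $\|\phi/\jjeps\|_{H^1(\oe)}\le C\epsilon^{-1}\|\phi\|_{H^1(\oe)}$ coming from \eqref{EstimateGradJeps}. The only cosmetic difference is that you spell out the discrete product rule and the origin of the $\epsilon^{-1}$ more explicitly, which the paper leaves as an ``elemental calculation.''
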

\begin{proof}
Again, we only consider the time-derivative, because the other inequalities are direct consequences of Proposition \ref{AprioriEstimateDiscreteSolutionUeps}. An elemental calculation gives us for $ t \in (t^{j-1},t^j)$
\begin{align*}
\partial_t \uhteps(t) = \frac{1}{\jjeps} \partial_t \whteps (t) - \frac{\ujoeps}{\jjeps} \frac{\jjeps - \jjoeps }{\dt}.
\end{align*}
Hence, for all $\phi \in H^1(\oe)$ with $\|\phi\|_{H^1(\oe)} \le 1$, we obtain
\begin{align*}
\left\langle \partial_t \uhteps (t),\phi \right\rangle_{H^1(\oe)',H^1(\oe)} &\le \big\|\partial_t \whteps(t)\big\|_{H^1(\oe)'} \left\|\frac{\phi}{\jjeps}\right\|_{H^1(\oe)} + \left\|\frac{\ujoeps}{\jjeps}\right\|_{L^2(\oe)} \left\|\frac{\jjeps - \jjoeps}{\dt}\right\|_{L^{\infty}(\oe)}
\\
&\le C \left(\epsilon^{-1} \big\|\partial_t \whteps(t)\big\|_{H^1(\oe)'} + 1 \right),
\end{align*}
where for the second inequality we used Lemma \ref{LemmaEstimatesTimeDisCoeff} and Proposition \ref{AprioriEstimateDiscreteSolutionUeps}, see also $\eqref{EstimateGradJeps}$. From Lemma \ref{AprioriEstimatesWeps} we obtain the desired result.
\end{proof}
Based on these uniform \textit{a priori} estimates  with respect to $\dt$, we pass to the limit $\dt \to 0$ to obtain a solution of the continuous microscopic model $\eqref{MicProbFixedDomain}$. 
\begin{corollary}\label{ConvergenceUepsdt}
There exists $\ueps \in L^2((0,T),H^1(\oe))\cap H^1((0,T),H^1(\oe)')$, such that up to a subsequence of $\{dt\}$ it holds for $\frac12 < \beta <  1$ 
\begin{align*}
\uhteps &\rightharpoonup \ueps &\mbox{ weakly in }& L^2((0,T),H^1(\oe)),
\\
\uhteps &\rightarrow \ueps &\mbox{ in }& L^2((0,T),H^{\beta}(\oe)),
\\
\partial_t \uhteps &\rightharpoonup \partial_t \ueps &\mbox{ weakly in }& L^2((0,T),H^1(\oe)'),
\\
\ubteps &\rightarrow \ueps &\mbox{ in }& L^2((0,T),L^2(\oe)),
\\
\nabla \ubteps &\rightharpoonup \nabla \ueps &\mbox{ weakly in }& L^2((0,T),L^2(\oe)),
\\
\ubteps &\rightarrow \ueps &\mbox{ in }& L^2((0,T),L^2(\geps)),
\end{align*}
for $\dt \to 0$. Further, we have the following \textit{a priori} estimate:
\begin{align*}
\Vert \ueps \Vert_{L^2((0,T),H^1(\oe))} + \epsilon \Vert \partial_t \ueps \Vert_{L^2((0,T),H^1(\oe)')} \le C.
\end{align*}
Additionally, we have $\ueps \in L^{\infty}((0,T),L^2(\oe))$ with
\begin{align*}
\Vert \ueps \Vert_{L^{\infty}((0,T),L^2(\oe))} \le C.
\end{align*}
\end{corollary}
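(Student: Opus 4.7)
The plan is to combine the uniform (in $\dt$) a priori estimates from Lemmas \ref{AprioriEstimatesWeps}--\ref{AprioriUepsDt} with standard reflexivity and an Aubin--Lions type compactness result, then to transfer the convergences from the piecewise-linear interpolant $\uhteps$ to the piecewise-constant interpolant $\ubteps$. The essential subtlety is that the uniform $L^2((0,T),H^1(\oe)')$ bound on the time derivative is available only for $\uhteps$; compactness therefore applies directly to $\uhteps$, and a separate interpolation/duality argument is needed for $\ubteps$.

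First, by Lemma \ref{AprioriUepsDt} the family $\{\uhteps\}_{\dt}$ is bounded in $L^2((0,T),H^1(\oe))$ and $\{\partial_t \uhteps\}_{\dt}$ is bounded in $L^2((0,T),H^1(\oe)')$. Reflexivity yields, along a subsequence, a limit $\ueps \in L^2((0,T),H^1(\oe))$ with $\uhteps \rightharpoonup \ueps$, and the standard distributional identification gives $\partial_t \uhteps \rightharpoonup \partial_t \ueps$ in $L^2((0,T),H^1(\oe)')$. The Aubin--Lions--Simon lemma applied to the triple $H^1(\oe) \hookrightarrow\hookrightarrow H^\beta(\oe) \hookrightarrow H^1(\oe)'$ for $\beta \in (1/2,1)$ produces the strong convergence $\uhteps \to \ueps$ in $L^2((0,T),H^\beta(\oe))$.

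Next I transfer these convergences to $\ubteps$. From the definitions, for $t \in (t^{j-1},t^j]$,
\begin{align*}
\ubteps(t) - \uhteps(t) = (t^j - t)\,\partial_t \uhteps(t),
\end{align*}
so that $\|\ubteps - \uhteps\|_{L^2((0,T),H^1(\oe)')} \le \dt\,\|\partial_t \uhteps\|_{L^2((0,T),H^1(\oe)')} \to 0$. Combining this with the uniform bound on $\ubteps$ in $L^2((0,T),H^1(\oe))$ and the interpolation inequality $\|v\|_{L^2(\oe)}^2 \le C\|v\|_{H^1(\oe)}\|v\|_{H^1(\oe)'}$ (integrated in time and followed by Cauchy--Schwarz) yields $\ubteps \to \ueps$ strongly in $L^2((0,T),L^2(\oe))$; the weak convergence $\nabla \ubteps \rightharpoonup \nabla \ueps$ in $L^2((0,T),L^2(\oe))$ then follows from the uniform $H^1$-bound and uniqueness of weak limits. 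For the trace convergence on $\geps$, I use the interpolation inequality $\|v\|_{H^\beta(\oe)} \le C\|v\|_{L^2(\oe)}^{1-\beta}\|v\|_{H^1(\oe)}^\beta$ with H\"older in time to upgrade to strong convergence in $L^2((0,T),H^\beta(\oe))$ for some $\beta \in (1/2,1)$, and continuity of the trace $H^\beta(\oe) \to L^2(\geps)$ gives the result.

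Finally, the a priori bounds for $\ueps$ follow from weak (respectively weak-$*$) lower semicontinuity of norms applied to the uniform bounds on $\uhteps$ and $\partial_t \uhteps$; the $L^\infty((0,T),L^2(\oe))$ bound is obtained, e.g.\ by Fatou applied to an a.e.\ convergent subsequence of $\ubteps$, using the corresponding bound in Lemma \ref{AprioriUepsDt}. The only non-routine step is the duality-interpolation argument bridging $\ubteps$ and $\uhteps$; everything else reduces to reflexivity, Aubin--Lions, and lower semicontinuity.
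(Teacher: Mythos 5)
Your proof is correct and follows essentially the same strategy as the paper's: extract weakly convergent subsequences from the uniform bounds in Lemma~\ref{AprioriUepsDt}, apply Aubin--Lions with the compact embedding $H^1(\oe)\hookrightarrow\hookrightarrow H^\beta(\oe)\hookrightarrow H^1(\oe)'$, bridge $\uhteps$ and $\ubteps$ by exploiting the factor $\dt$, and then push to the trace.

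The only genuine (small) variation is in how you bridge the two interpolants. The paper estimates the $L^2$ difference directly via the discrete identity
$\|\uhteps(t)-\ubteps(t)\|_{L^2}^2\le 4\|\ujeps-\ujoeps\|_{L^2}^2 = 4\dt\,(\partial_t\uhteps,\,\ujeps-\ujoeps)_{L^2}$
and then applies duality, whereas you use the identity $\ubteps(t)-\uhteps(t)=(t^j-t)\,\partial_t\uhteps(t)$ to bound the $H^1(\oe)'$-norm of the difference and then apply the interpolation inequality $\|v\|_{L^2(\oe)}^2\le\|v\|_{H^1(\oe)}\|v\|_{H^1(\oe)'}$. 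Both routes produce the same bound $\|\ubteps-\uhteps\|^2_{L^2((0,T)\times\oe)}\le C\dt/\epsilon$; your version is slightly more abstract and perhaps cleaner to state, while the paper's is slightly more transparent about where the discrete structure enters. Similarly, for the trace your $L^2$--$H^1$--$H^\beta$ interpolation plus H\"older plus continuity of the trace from $H^\beta(\oe)$ is just a repackaging of the paper's $\epsilon$-dependent interpolated trace inequality $\|v\|^2_{L^2(\geps)}\le C_\epsilon(\|v\|_{L^2(\oe)}^2+\|v\|_{L^2(\oe)}\|v\|_{H^1(\oe)})$. These are cosmetic differences; no gap.

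One small remark: for the $L^\infty((0,T),L^2(\oe))$ bound the paper records the weak-$*$ convergence $\uhteps\rightharpoonup^* u_\epsilon$ in $L^\infty((0,T),L^2(\oe))$ and uses weak-$*$ lower semicontinuity, which is slightly tidier than extracting an a.e.\ convergent (in $t$) subsequence and invoking Fatou; your argument works but needs you to note that strong convergence in $L^2((0,T),L^2(\oe))$ gives, along a further subsequence, $\|\ubteps(t)-\ueps(t)\|_{L^2(\oe)}\to 0$ for a.e.\ $t$, which you should spell out if you go this route.
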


\begin{proof}
From the \textit{a priori} estimates in Lemma \ref{AprioriUepsDt} we obtain the existence of functions $\ueps, \ueps^{\ast} \in L^2((0,T),H^1(\oe))$ and $\partial_t \ueps \in L^2((0,T),H^1(\oe)')$, such that up to a subsequence
\begin{align*}
\uhteps &\rightharpoonup \ueps  &\mbox{ weakly in }& L^2((0,T),H^1(\oe)),
\\
\uhteps &\rightharpoonup \ueps  &\mbox{ weakly$^{\ast}$ in }& L^{\infty}((0,T),L^2(\oe)),
\\
\ubteps &\rightharpoonup \ueps^{\ast} &\mbox{ weakly in }& L^2((0,T),H^1(\oe)),
\\
\partial_t \uhteps &\rightharpoonup \partial_t \ueps &\mbox{ weakly in }& L^2((0,T),H^1(\oe)').
\end{align*}
The \textit{a priori} estimates follow directly from the lower semi-continuity of the norm with respect to the weak- and weak$^{\ast}$-topology. Since for $\frac12 < \beta < 1$ the embedding $H^1(\oe) \hookrightarrow H^{\beta}(\oe)$ is compact, the Aubin-Lions Lemma implies the strong convergence of $\uhteps $ in $L^2((0,T),H^{\beta}(\oe))$. Let us check $\ueps = \ueps^{\ast}$. For $t \in (t^{j-1},t^j)$ it holds that
\begin{align*}
\big\|\uhteps(t) - \ubteps(t) \big\|^2_{L^2(\oe)} &\le 4\|\ujeps - \ujoeps \|^2_{L^2(\oe)}
\\
&= 4 \dt (\partial_t \uhteps , \ujeps - \ujoeps )_{L^2(\oe)}
\\
&\le 4 \dt \big\|\partial_t \uhteps (t) \big\|_{H^1(\oe)'} \|\ujeps - \ujoeps \|_{H^1(\oe)}.
\end{align*}
Integration with respect to time implies 
\begin{align*}
\big\|\uhteps - \ubteps \big\|^2_{L^2((0,T),L^2(\oe))} &\le 4 \dt \sum_{j=1}^N \int_{t^{j-1}}^{t^j} \big\|\partial_t \uhteps (t) \big\|_{H^1(\oe)'} \|\ujeps - \ujoeps \|_{H^1(\oe)} dt 
\\
&\le 2\dt \left( \big\|\partial_t \uhteps \big\|^2_{L^2((0,T),H^1(\oe)')} + \dt \sum_{j=1}^N \|\ujeps - \ujoeps \|^2_{H^1(\oe)} \right)
\\
&\le \frac{C \dt }{\epsilon} \overset{\dt \to 0}{\longrightarrow } 0,
\end{align*}
where in the last estimate we used the results from Proposition \ref{AprioriEstimateDiscreteSolutionUeps} and Lemma \ref{AprioriUepsDt}. The triangle inequality implies $\ueps = \ueps^{\ast}$ and  the strong convergence of $\ubteps$ in $L^2((0,T),L^2(\oe))$.

It remains to show the strong convergence of the traces. This follows directly from the following interpolated trace inequality: There exists a constant $C_{\epsilon} >0$ (which may depend on $\epsilon$), such that for all $v \in H^1(\oe)$ it holds that
\begin{align*}
\|v \|_{L^2(\geps)}^2 \le C_{\epsilon} \left( \|v\|^2_{L^2(\oe)} + \|v\|_{L^2(\oe)}\|v\|_{H^1(\oe)}\right).
\end{align*}
Now, the strong convergence of $\ubteps$ in $L^2((0,T),L^2(\oe))$ implies the strong convergence of the traces in $L^2((0,T),L^2(\geps))$.
\end{proof}

\begin{corollary}\label{ConvergenceRepsDt}
There exists $\reps \in W^{1,\infty}((0,T),L^2(\Omega))$ constant on every micro cell $\epsilon(Y + k)$ for $k\in K_{\epsilon}$ and $\reps, \partial_t \reps \in L^{\infty}(0,T)\times \Omega)$, such that up to a subsequence of $\{dt\}$ it holds for every $p\in [1,\infty)$ 
\begin{align*}
\rhteps &\overset{\ast}{\rightharpoonup} \reps &\mbox{ weakly}^{\ast} \mbox{ in }& W^{1,\infty}((0,T),L^2(\Omega)),
\\
\rbteps &\rightarrow \reps &\mbox{ in }& L^{\infty}((0,T)\times \Omega),
\\
\rbteps &\rightarrow \reps &\mbox{ in }& L^{\infty}((0,T)\times \geps),
\end{align*}
for $\dt \to 0$ and all $\alpha \in (0,1)$. Further, we have
\begin{align*}
\Vert \reps \Vert_{W^{1,\infty}((0,T),L^{2}(\Omega))} + \Vert \reps \Vert_{L^{\infty}((0,T)\times \Omega)} + \Vert \reps \Vert_{L^{\infty}((0,T)\times \geps)} \le C \epsilon.
\end{align*}
\end{corollary}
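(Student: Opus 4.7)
The plan is to combine the $\dt$-uniform bounds from Proposition \ref{PropEstimateDiscTimeReps} and Lemma \ref{AprioriRepsDt} with a Banach--Alaoglu argument in $W^{1,\infty}((0,T),L^2(\Omega))$, and then to upgrade the resulting weak-$\ast$ convergence to uniform convergence by exploiting the fact that, at fixed $\epsilon$, the cell-wise constant structure of $\rbteps$ and $\rhteps$ reduces the spatial dependence to finitely many degrees of freedom.

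First, Lemma \ref{AprioriRepsDt} yields $\|\rhteps\|_{W^{1,\infty}((0,T),L^\infty(\Omega))} \le C\epsilon$ uniformly in $\dt$. Banach--Alaoglu therefore produces a subsequence (not relabelled) and a limit $\reps \in W^{1,\infty}((0,T),L^2(\Omega))$ with $\rhteps \overset{\ast}{\rightharpoonup} \reps$ weakly-$\ast$, and lower semicontinuity of the norms transfers the estimates to $\reps$. Since every $\rjeps$ is constant on each microcell $\epsilon(Y+\mathbf{k})$ by Proposition \ref{PropEstimateDiscTimeReps} and Remark \ref{rem:Linear_TimeDiscrete}, both $\rbteps$ and $\rhteps$ share this property, and hence so does their weak-$\ast$ limit $\reps$.

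Next, I upgrade the convergence to the uniform $L^\infty((0,T)\times \Omega)$ topology. Let $X_\epsilon$ denote the (finite-dimensional) subspace of $L^\infty(\Omega)$ consisting of functions that are constant on every microcell $\epsilon(Y+\mathbf{k})$; on $X_\epsilon$ all norms are equivalent. The bound $\|\partial_t\rhteps\|_{L^\infty((0,T)\times\Omega)} \le C\epsilon$ makes $\{\rhteps\}$ equi-Lipschitz continuous as maps $[0,T]\to X_\epsilon$, while the pointwise bound makes it equi-bounded. Arzel\`a--Ascoli therefore provides a further subsequence converging uniformly in $C^0([0,T];X_\epsilon)$; the limit must agree with $\reps$ by uniqueness of weak-$\ast$ limits, so $\rhteps \to \reps$ in $L^\infty((0,T)\times\Omega)$. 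For $\rbteps$, the piecewise-linear/constant identity
\[
\rhteps(t) - \rbteps(t) = \left(\tfrac{t-t^{j-1}}{\dt} - 1\right)(\rjeps - \rjoeps), \qquad t \in (t^{j-1},t^j],
\]
combined with $\|\rjeps - \rjoeps\|_{L^\infty(\Omega)} \le \dt\,\|\partial_t\rhteps\|_{L^\infty} \le C\dt\epsilon$, yields $\|\rhteps - \rbteps\|_{L^\infty((0,T)\times\Omega)} \le C\dt\epsilon \to 0$, so $\rbteps$ converges to the same $\reps$.

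The trace convergence on $\geps$ then comes for free: both $\rbteps$ and $\reps$ are constant on every microcell and $\geps \subset \bigcup_{\mathbf{k}\in K_\epsilon} \epsilon(Y+\mathbf{k})$, so the $L^\infty((0,T)\times\geps)$ norm of the difference is dominated by the $L^\infty((0,T)\times\Omega)$ norm, and the final quantitative estimate $\|\reps\|_{L^\infty((0,T)\times\Omega)} + \|\reps\|_{L^\infty((0,T)\times\geps)} \le C\epsilon$ is inherited from $\rbteps$ by this uniform convergence (with the $W^{1,\infty}$ piece obtained already from the weak-$\ast$ step). There is no serious obstacle here, since the key compactness is essentially finite-dimensional at fixed $\epsilon$; note that the argument is intrinsically non-uniform in $\epsilon$, but uniformity in $\epsilon$ is not needed for this corollary and is handled later via the two-scale compactness results of Section \ref{SectionDerivationMacroModel}.
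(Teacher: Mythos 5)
Your proof is correct and follows essentially the same route as the paper: both obtain the weak-$\ast$ limit from the a priori estimates of Lemma \ref{AprioriRepsDt}, upgrade to uniform convergence via Arzel\`a--Ascoli by exploiting that at fixed $\epsilon$ the cell-wise constant functions form a finite-dimensional space (so pointwise-in-$t$ relative compactness in $L^\infty(\Omega)$ is immediate), and then transfer the result from $\rhteps$ to $\rbteps$ via the $O(\dt)$ difference bound. The only superficial difference is that the paper records the Arzel\`a--Ascoli output as $C^{0,\alpha}([0,T],L^\infty(\Omega))$ convergence while you stop at $C^0$, which suffices for the stated conclusions.
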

\begin{proof}
The first convergence follows directly from the \textit{a priori} estimates in Lemma \ref{AprioriRepsDt}.
Further, due to the Arzel\`{a}-Ascoli theorem, $\rhteps$ converges to $\reps$ in $C^{0,\alpha}([0,T],L^{\infty}(\Omega))$ for arbitrary $\alpha \in (0,1)$. We emphasize that for every $t \in [0,T]$ the set $\{\rhteps(t)\}_{\dt}$ is a finite dimensional subset of $L^{\infty}(\Omega)$ and is therefore relatively compact. The convergence in $C^{0,\alpha}([0,T],L^{\infty}(\Omega))$ implies the strong convergence of $\rhteps $ in $L^{\infty}((0,T)\times \Omega)$ and, since $\rhteps $ is constant on every micro cell, in $L^{\infty}((0,T)\times \geps)$.
  Let us check the strong convergence of $\rbteps$.  We have
\begin{align*}
\big\|\rbteps(t, \cdot ) - \rhteps (t,\cdot)\big\|_{L^{\infty}((0,T)\times \Omega)} \le 2 \max_{j\in \{1,\ldots,N\}} \big\| \rjeps - \rjoeps \big\|_{L^{\infty}(\Omega)} \le C\dt.
\end{align*}
The triangle inequality implies $\rbteps \rightarrow \reps$ in $L^{\infty}((0,T)\times \Omega)$.  The inequality follows from Lemma \ref{AprioriRepsDt}.
\end{proof}
We will prove later in Lemma \ref{LemmaStrongConvergenceTimeDerRepsRothe} that also the time derivative of $\partial_t \rhteps $ converges strongly in the $L^p$-sense. Now, let us define the function
\begin{align}\label{FormulaRepresentationReps}
\jeps(t,x) = \det \left( I_n + \frac{\reps(t,x) - \epsilon \oR}{\epsilon} \nabla_y (\chi_0 \nu_0 )\left(\fxe\right)\right).
\end{align}
In the following corollary we prove the convergence of $\jbteps$ and $\jhteps$ to the function $\jeps$ for $\dt \to 0$.

\begin{corollary}\label{CorollaryConvergenceJdteps}
Up to a subsequence, for every $p \in (1,\infty)$ there holds the following convergence
\begin{align*}
\jbteps &\rightarrow \jeps \quad &\mbox{ in }& L^{\infty}((0,T)\times \oe),
\\
\jhteps &\rightharpoonup \jeps &\mbox{ weakly in }& L^p((0,T),W^{1,p}(\oe)),
\\
\jhteps &\rightarrow \jeps &\mbox{ in }& L^{\infty}((0,T)\times \oe),
\\
\partial_t \jhteps &\rightharpoonup \partial_t \jeps &\mbox{ weakly in }& L^p((0,T)\times \oe)
\end{align*}
for $\dt \to 0$.  We emphasize that, due to the structure of $\jeps$ in $\eqref{FormulaRepresentationReps}$, we have $\jeps \in C^0([0,T]\times \overline{\oe})$. Further, the following inequality holds
\begin{align*}
\Vert \jeps \Vert_{C^0([0,T]\times \overline{\oe})} + \Vert \partial_t \jeps \Vert_{L^{\infty}((0,T)\times \oe)} + \epsilon \Vert \nabla \jeps \Vert_{C^0([0,T]\times \overline{\oe})} \le C.
%
\end{align*}
\end{corollary}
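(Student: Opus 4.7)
The plan is to identify $\jbteps$ and $\jeps$ as evaluations of one common, deterministic map applied to the respective radius functions, and then transfer the convergence of Corollary \ref{ConvergenceRepsDt} for the radii into convergence of the determinants. Introduce $\Phi_{\epsilon}(r,y):=\det\bigl(I_n+\tfrac{r-\epsilon\oR}{\epsilon}\,\nabla_y(\chi_0\nu_0)(y)\bigr)$. Since $\chi_0\nu_0\in C^\infty_{\per}(Y)^n$ has uniformly bounded derivatives, $\Phi_{\epsilon}(\cdot,y)$ is a polynomial in $r$ whose coefficients are bounded in $y$, so for fixed $\epsilon>0$ it is Lipschitz in $r$ uniformly in $y$. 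By the definitions in \eqref{DefVepsDeps_j} and \eqref{FormulaRepresentationReps} one has $\jbteps(t,x)=\Phi_{\epsilon}(\rbteps(t,x),x/\epsilon)$ and $\jeps(t,x)=\Phi_{\epsilon}(\reps(t,x),x/\epsilon)$.

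Corollary \ref{ConvergenceRepsDt} provides $\rbteps\to\reps$ in $L^\infty((0,T)\times\Omega)$, so the Lipschitz property of $\Phi_{\epsilon}$ in $r$ immediately gives $\jbteps\to\jeps$ in $L^\infty((0,T)\times\oe)$. For the piecewise linear interpolant, on each subinterval $(t^{j-1},t^j]$ one has $\jhteps-\jbteps=\bigl(\tfrac{t-t^{j-1}}{\dt}-1\bigr)(\jjeps-\jjoeps)$, whence by \eqref{EstimatesTimeDiscCoeffDtJ} we get $\|\jhteps-\jbteps\|_{L^\infty((0,T)\times\oe)}\le C\dt$, and the triangle inequality yields $\jhteps\to\jeps$ in $L^\infty((0,T)\times\oe)$ as well.

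For the weak statements, Lemma \ref{AprioriEstimatesJepsDt} bounds $\jhteps$ in $L^\infty((0,T),W^{1,\infty}(\oe))$ and $\partial_t\jhteps$ in $L^\infty((0,T)\times\oe)$, uniformly in $\dt$. Weak$^\ast$ compactness then yields, along a further subsequence, $\nabla\jhteps\rightharpoonup\chi$ and $\partial_t\jhteps\rightharpoonup\psi$ in $L^p$ for every $p\in(1,\infty)$. Testing against $\varphi\in C^\infty_c((0,T)\times\oe)$ and using the strong $L^\infty$-convergence $\jhteps\to\jeps$ established above identifies $\chi=\nabla\jeps$ and $\psi=\partial_t\jeps$ in the distributional sense; both objects exist as $L^\infty$-functions directly from the representation \eqref{FormulaRepresentationReps} together with $\reps,\partial_t\reps\in L^\infty((0,T)\times\Omega)$ (Lemma \ref{AprioriRepsDt} and Corollary \ref{ConvergenceRepsDt}) and the smoothness of $\chi_0\nu_0$. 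The continuity $\jeps\in C^0([0,T]\times\overline{\oe})$ is then inherited from the uniform $L^\infty$-limit of the continuous functions $\jhteps$, and the stated norm bound on $\jeps$, including the factor $\epsilon^{-1}$ in the gradient estimate arising from the chain rule on $\chi_0(x/\epsilon)$ and $\nu_0(x/\epsilon)$, follows by direct inspection of \eqref{FormulaRepresentationReps}.

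The only potential obstacle is bookkeeping: the Lipschitz constant of $\Phi_{\epsilon}(\cdot,y)$ in $r$ depends on $\epsilon$, so Step~2 above does \emph{not} produce an $\epsilon$-uniform rate. This is harmless here, because in this corollary $\epsilon$ is held fixed while $\dt\to 0$; no $\epsilon$-uniformity is asserted or needed. The identification of weak limits in the last paragraph is then entirely standard once the $L^\infty$ strong convergence of $\jbteps$ and $\jhteps$ to the common $\jeps$ is in hand.
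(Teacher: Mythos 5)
Your proposal is correct and follows essentially the same route as the paper: Lipschitz continuity (of $\Phi_\epsilon$ in your setup, of the determinant map directly in the paper's) combined with the strong $L^\infty$-convergence of $\rbteps$ from Corollary \ref{ConvergenceRepsDt} gives $\jbteps\to\jeps$; the bound \eqref{EstimatesTimeDiscCoeffDtJ} handles $\jhteps-\jbteps$; and the weak convergences and the final estimates are read off from Lemma \ref{AprioriEstimatesJepsDt} and \eqref{FormulaRepresentationReps}. You correctly flag that the Lipschitz constant (and hence the rate) carries an $\epsilon^{-1}$ factor, which is harmless since $\epsilon$ is fixed throughout the Rothe passage, matching the paper's explicit $C/\epsilon$ bound.
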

\begin{proof}
First of all, let us check the convergence of $\jbteps$. Using the Lipschitz continuity of the determinant and the essential boundedness of $\rbteps$ with respect to $\dt$, we obtain
\begin{align*}
\big\|\jeps - \jbteps \big\|_{L^{\infty}((0,T)\times \oe)} \le \frac{C}{\epsilon} \big\|\reps - \rbteps \big\|_{L^{\infty}((0,T)\times \oe)}  \overset{\dt \to 0 }{\longrightarrow } 0.
\end{align*}
Further, we have  from $\eqref{EstimatesTimeDiscCoeffDtJ}$
\begin{align*}
\Vert \jhteps - \jbteps \Vert_{L^{\infty}((0,T)\times \oe)} \le 2 \Vert \jjeps - \jeps^{j-1} \Vert_{L^{\infty}((0,T)\times \oe)} \le C \dt.
\end{align*}
The weak convergence of $\jhteps $ in $L^p((0,T),W^{1,p}(\oe)) \cap W^{1,p}((0,T),L^p(\oe))$ now follows directly from the \textit{a priori} estimates in Lemma \ref{AprioriEstimatesJepsDt}. The estimates for $\jeps$ are a consequence of Lemma \ref{AprioriEstimatesJepsDt}, formula $\eqref{FormulaRepresentationReps}$, and the inequality for $\reps$ in Corollary \ref{ConvergenceRepsDt}.

\end{proof}

\begin{corollary}
Up to a subsequence, we have the following convergence results:
\begin{align*}
\whteps &\rightharpoonup \jeps \ueps &\mbox{ in }& L^2((0,T),H^1(\oe)),
\\
\partial_t \whteps &\rightharpoonup \partial_t \big(\jeps \ueps\big) &\mbox{ in }& L^2((0,T),H^1(\oe)'),
\\
\wbteps &\rightarrow \jeps \ueps &\mbox{ in }& L^2((0,T),L^2(\oe))
\end{align*}
for $\dt \to 0$. Further, we have the \textit{a priori} estimate
\begin{align*}
\Vert \jeps \ueps \Vert_{L^{\infty}((0,T),L^2(\oe))} + \epsilon  \Vert \nabla  (\jeps \ueps ) \Vert_{L^2((0,T),L^2(\oe))} + \Vert \partial_t (\jeps \ueps) \Vert_{L^2((0,T),H^1(\oe)')} \le C.
\end{align*}
\end{corollary}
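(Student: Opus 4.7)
The plan is to extract weakly (and weak-$*$) convergent subsequences of $\wbteps$, $\whteps$, and $\partial_t\whteps$ using the $\dt$-uniform a priori bounds of Lemma \ref{AprioriEstimatesWeps} and Banach--Alaoglu, then to identify all limits as $\jeps\ueps$, respectively $\partial_t(\jeps\ueps)$, by combining the product structure $\wbteps=\jbteps\ubteps$ with the strong convergences already proved in Corollaries \ref{ConvergenceUepsdt} and \ref{CorollaryConvergenceJdteps}. The final a priori estimate then drops out of weak/weak-$*$ lower semicontinuity.

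\textbf{Step 1 (extraction).} For fixed $\epsilon>0$, Lemma \ref{AprioriEstimatesWeps} bounds $\whteps$ in $L^2((0,T),H^1(\oe))\cap L^\infty((0,T),L^2(\oe))$ and $\partial_t\whteps$ in $L^2((0,T),H^1(\oe)')$; analogously $\wbteps$ is bounded in $L^2((0,T),H^1(\oe))\cap L^\infty((0,T),L^2(\oe))$. Along a common subsequence with the ones from Corollaries \ref{ConvergenceUepsdt}--\ref{CorollaryConvergenceJdteps}, I extract weak limits $\whteps\rightharpoonup w$, $\wbteps\rightharpoonup w^\ast$ in $L^2((0,T),H^1(\oe))$ and $\partial_t\whteps\rightharpoonup z$ in $L^2((0,T),H^1(\oe)')$.

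\textbf{Step 2 (identification of $w^\ast$).} By construction $\wbteps=\jbteps\ubteps$. From Corollary \ref{CorollaryConvergenceJdteps}, $\jbteps\to\jeps$ strongly in $L^\infty((0,T)\times\oe)$, and from Corollary \ref{ConvergenceUepsdt}, $\ubteps\to\ueps$ strongly in $L^2((0,T),L^2(\oe))$. The product therefore converges strongly in $L^2((0,T),L^2(\oe))$, giving $w^\ast=\jeps\ueps$, and in particular the third claim $\wbteps\to\jeps\ueps$ in $L^2((0,T),L^2(\oe))$.

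\textbf{Step 3 (identification of $w$).} A direct computation on $(t^{j-1},t^j]$ with $\alpha=(t-t^{j-1})/\dt\in[0,1]$ gives
\begin{equation*}
\whteps(t)-\wbteps(t)=-(1-\alpha)\bigl(\jjeps\ujeps-\jjoeps\ujoeps\bigr).
\end{equation*}
Splitting $\jjeps\ujeps-\jjoeps\ujoeps=\jjeps(\ujeps-\ujoeps)+(\jjeps-\jjoeps)\ujoeps$ and invoking $\eqref{EstimatesTimeDiscCoeffDtJ}$ together with the estimate $\dt\sum_{j}\|\ujeps-\ujoeps\|_{L^2(\oe)}^2\le C\dt/\epsilon$ (which was established inside the proof of Corollary \ref{ConvergenceUepsdt} via $\|\uhteps-\ubteps\|_{L^2((0,T),L^2(\oe))}^2=\tfrac{\dt}{3}\sum_j\|\ujeps-\ujoeps\|_{L^2(\oe)}^2$), I deduce $\|\whteps-\wbteps\|_{L^2((0,T),L^2(\oe))}\to 0$. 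Combined with Step 2 this yields $\whteps\to\jeps\ueps$ strongly in $L^2((0,T),L^2(\oe))$, and uniqueness of the $L^2((0,T),L^2(\oe))$-limit upgrades the weak convergence of Step 1 to $w=\jeps\ueps$ in $L^2((0,T),H^1(\oe))$.

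\textbf{Step 4 (time derivative and estimates).} To identify $z=\partial_t(\jeps\ueps)$, I test with $\varphi(t)v$ for $\varphi\in C_c^\infty((0,T))$ and $v\in H^1(\oe)$: integration by parts in time on $\whteps$ and the convergences $\whteps\rightharpoonup\jeps\ueps$ in $L^2((0,T),L^2(\oe))$ and $\partial_t\whteps\rightharpoonup z$ in $L^2((0,T),H^1(\oe)')$ give $z=\partial_t(\jeps\ueps)$ in the sense of distributions. The three a priori bounds then follow at once from weak-$*$ lower semicontinuity in $L^\infty((0,T),L^2(\oe))$ and weak lower semicontinuity in $L^2((0,T),L^2(\oe))$ and $L^2((0,T),H^1(\oe)')$ applied to the corresponding bounds of Lemma \ref{AprioriEstimatesWeps}. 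The main (mildly) delicate point is the vanishing of $\whteps-\wbteps$ in Step 3, since the natural bound on $\ujeps-\ujoeps$ is only $\epsilon$-dependent; fortunately this is precisely the bound that already sufficed in Corollary \ref{ConvergenceUepsdt}.
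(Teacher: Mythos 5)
Your proof is correct, and it follows a genuinely different (and in fact more careful) route than the paper's own proof. The paper identifies the limit by appealing to the pointwise a.e.\ convergence of $\jhteps\uhteps$ together with the claimed identity $\whteps=\jhteps\uhteps$. That identity is, strictly speaking, false: $\whteps$ is piecewise \emph{affine} in $t$ on each interval $(t^{j-1},t^j]$ while $\jhteps\uhteps$ is piecewise \emph{quadratic}, and one checks directly that
\begin{equation*}
\whteps-\jhteps\uhteps=-\alpha(1-\alpha)\,\bigl(\jjeps-\jjoeps\bigr)\bigl(\ujeps-\ujoeps\bigr),\qquad \alpha=\tfrac{t-t^{j-1}}{\dt},
\end{equation*}
so they agree only at the nodes $t^{j-1},t^j$. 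The paper's conclusion remains valid because this residual vanishes in $L^2$ as $\dt\to0$, but as written the argument has a small gap. You avoid the issue entirely by using the exact product structure $\wbteps=\jbteps\ubteps$ (which \emph{does} hold identically), identifying the limit of $\wbteps$ from the strong convergences of $\jbteps$ and $\ubteps$, and then proving $\|\whteps-\wbteps\|_{L^2((0,T),L^2(\oe))}\to0$ by the explicit decomposition $\jjeps\ujeps-\jjoeps\ujoeps=\jjeps(\ujeps-\ujoeps)+(\jjeps-\jjoeps)\ujoeps$ together with the estimate $\eqref{EstimatesTimeDiscCoeffDtJ}$ and the bound $\dt\sum_j\|\ujeps-\ujoeps\|_{L^2(\oe)}^2\le C\dt/\epsilon$. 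This buys you a proof that is complete without needing to comment on the discrepancy between $\whteps$ and $\jhteps\uhteps$, at the modest cost of a short computation. The remaining steps (extraction, identification of the time derivative by duality, lower semicontinuity for the \textit{a priori} bounds) match the paper's intent.
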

\begin{proof}
The existence of a limit function $W_{\epsilon}$ such that the convergence results above hold (up to a subsequence) for $\jeps \ueps$ replaced by $W_{\epsilon}$ follows as in the proofs above. The identity $W_{\epsilon} = \jeps \ueps$ follows easily from the pointwise almost everywhere convergence of $\jhteps \uhteps$ to $\jeps \ueps $ (for a subsequence) and $\whteps = \jhteps \uhteps$.
\end{proof}

\subsection{Existence for the continuous model}

Now, we want to pass to the limit $\dt \to 0$ in the time-discrete problem from Definition \ref{DefTimeDiscModel}. Let us define the piecewise constant coefficients in the following way for $t \in (t^{j-1},t^j)$:
\begin{align*}
&\gbteps(t,x) := G_{\epsilon}^{j-1}(x), \quad
\sbteps(t,x) := \seps^j(x),\quad 
\fbteps(t,x) := \nabla \seps^j(x),
\\
&\dbteps(t,x) := D_{\epsilon}^j, \quad
\vbteps(t,x) := V_{\epsilon}^j, \quad
f^{\epsilon}_{\dt} := f_{\epsilon}^j.
\end{align*}
Multiplying the  variational equation $\eqref{VarEquTimeDiscModel} $ with $\phi \in C^{\infty}_0\big([0,T)\times \overline{\oe}\big)$ and integrating with respect to time gives us:
\begin{align}
\begin{aligned}
\label{DiskVarEquationInterpolated}
\int_0^T \int_{\oe} \partial_t& \whteps \phi dx dt + \int_0^T \int_{\oe} \dbteps \nabla \ubteps \cdot \nabla \phi dxdt
\\
=&\int_0^T \int_{\oe} \vbteps \cdot \nabla \ubteps \phi dx dt + \int_0^T \int_{\oe} \jbteps f_{\dt}^{\epsilon} \phi dx dt
\\
&+ \int_0^T \int_{\oe} \ubteps \partial_t \jhteps \phi dx dt - \int_{\geps} \gbteps \big(\ubteps(\cdot - \dt) - \rho \big) \jbteps \phi d\sigma dt.
\end{aligned}
\end{align}
To pass to the limit $\dt \to 0$ in $\eqref{DiskVarEquationInterpolated}$, we need the following convergence results for the interpolated coefficients:
\begin{lemma}\label{LemmaStrongConvergenceTimeDerRepsRothe}
Up to a subsequence of $\{dt\}$ we have the following convergence results for arbitrary $p \in [1,\infty)$
\begin{subequations}
\begin{align}
\label{ConvergenceGepsDt}
\partial_t \rhteps = \gbteps &\rightarrow G_{\epsilon}(\ueps,\reps) &\mbox{ in }& L^p((0,T)\times \geps),
\\
\label{ConvergenceDepsDt}
\dbteps &\rightarrow D_{\epsilon} &\mbox{ in }& L^p((0,T)\times \oe),
\\
\label{ConvergenceVepsDt}
\vbteps &\rightarrow V_{\epsilon} &\mbox{ in }& L^p((0,T)\times \oe)
\end{align}
\end{subequations}
for $\dt \to 0 $.
\end{lemma}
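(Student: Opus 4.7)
The plan is to prove each of the three convergences separately, reducing each one to a combination of Lipschitz-type continuity of the coefficient in its arguments together with the strong convergences of $\ubteps$ and $\rbteps$ already available from Corollaries \ref{ConvergenceUepsdt} and \ref{ConvergenceRepsDt}.

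For \eqref{ConvergenceGepsDt}, note that on $t \in (t^{j-1}, t^j]$, $\gbteps$ equals the surface average of $g(\ubteps(\cdot - \dt), \rbteps(\cdot - \dt)/\epsilon)$ over $\geps(\epsilon [\cdot/\epsilon])$. Since $g$ is globally Lipschitz by Assumption \ref{AssumptionODEg}, one can pointwise estimate the difference $\gbteps - G_{\epsilon}(\ueps, \reps)$ by a surface average of the pointwise differences of the arguments. Jensen's inequality then converts this into an $L^p$ estimate in terms of $\Vert \ubteps(\cdot - \dt) - \ueps \Vert_{L^p((0,T) \times \geps)}$ and $\epsilon^{-1}\Vert \rbteps(\cdot - \dt) - \reps \Vert_{L^{\infty}((0,T) \times \geps)}$. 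The first vanishes as $\dt \to 0$ because strong $L^2$ convergence of $\ubteps$ on the surface (Corollary \ref{ConvergenceUepsdt}) is preserved under time translation (the contribution over $(0, \dt)$ is controlled uniformly using Proposition \ref{AprioriEstimateDiscreteSolutionUeps} and the scaled trace inequality, and vanishes with $\dt$). The second vanishes by the $L^{\infty}$ convergence in Corollary \ref{ConvergenceRepsDt}. The bound in $L^p$ for arbitrary $p$ follows by interpolation with the uniform $L^{\infty}$ bound on $g$.

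For \eqref{ConvergenceDepsDt}, I would exploit the explicit continuous dependence of $\dbteps$ on $\rbteps$. Since $\fbteps = I_n + \frac{\rbteps - \epsilon \oR}{\epsilon} \nabla_y(\chi_0 \nu_0)(\cdot/\epsilon)$ and $\rbteps \to \reps$ in $L^{\infty}((0,T) \times \Omega)$ by Corollary \ref{ConvergenceRepsDt}, $\fbteps$ converges uniformly to $\feps$. The Jacobian $\jbteps$ is a polynomial in the entries of $\fbteps$ and also converges uniformly. The matrix inversion is locally Lipschitz on the set of matrices with $\det \geq (\uR/\oR)^{n-1}$, which is preserved uniformly by Lemma \ref{LemmaEstimatesTimeDisCoeff}. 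Finally, $D(\sbteps) \to D(\seps)$ uniformly since $\sbteps \to \seps$ uniformly and $D$ is continuous on $\overline{\Omega}$ by Assumption \ref{AssumptionDiffusion}. Multiplying, one obtains $\dbteps \to D_{\epsilon}$ in $L^{\infty}((0,T) \times \oe)$, hence in every $L^p$.

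For \eqref{ConvergenceVepsDt}, writing $\vbteps = \jbteps (\fbteps)^{-T} \partial_t \rhteps \, \chi_0(\cdot/\epsilon) \nu_0(\cdot/\epsilon)$ and using the uniform convergence of $\jbteps (\fbteps)^{-T}$ from the previous step, the claim reduces to strong $L^p$ convergence in the bulk of $\partial_t \rhteps = \gbteps$ (extended constantly from each micro cell). This follows by exactly the same argument as in the first step, applied cellwise, since $\gbteps$ is constant on each $\epsilon(Y + \mathbf{k})$ and the surface area factor $|\geps| \sim \epsilon^{n-1}$ is compensated by the prefactor $\epsilon/|\epsilon \Gamma|$ in the definition of $G_{\epsilon}^{j-1}$. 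The identification of the limit as $\veps = \jeps \fiteps \partial_t \seps$ uses $\partial_t \reps = G_{\epsilon}(\ueps, \reps)$ from \eqref{DefMicProblFixedDomainODEReps} and the representation $\partial_t \seps = \partial_t \reps \, \chi_0(\cdot/\epsilon) \nu_0(\cdot/\epsilon)$.

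The main obstacle is the first step, where the time shift $(\cdot - \dt)$ inside the surface average of a nonlinear function has to be handled carefully, together with the amplification of small discrepancies in $\rbteps$ by the factor $\epsilon^{-1}$; the other two steps are essentially algebraic consequences once the strong convergence of $\rbteps$ in $L^{\infty}$ and of the bulk version of $\gbteps$ in $L^p$ are secured.
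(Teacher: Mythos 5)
Your proposal is correct and follows essentially the same route as the paper: Lipschitz continuity of $g$ together with the strong convergences of $\ubteps$ and $\rbteps$ from Corollaries \ref{ConvergenceUepsdt} and \ref{ConvergenceRepsDt} (invoking Kolmogorov-type equicontinuity for the time shift) gives \eqref{ConvergenceGepsDt} in $L^2$ first, upgraded to $L^p$ via the $L^\infty$ bound on $g$; and \eqref{ConvergenceDepsDt}, \eqref{ConvergenceVepsDt} follow from continuous dependence of the coefficients on $\rbteps$, $\sbteps$, $\jbteps$, $\partial_t\rhteps$. The only cosmetic difference is that for the last two you argue via uniform convergence of $\fbteps$, $\jbteps$, $D(\sbteps)$ (which is indeed available since $\rbteps\to\reps$ in $L^\infty$), whereas the paper passes through a.e.\ convergence plus dominated convergence.
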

\begin{proof}
We start with the convergence $\eqref{ConvergenceGepsDt}$. 
 We have  
\begin{align*}
\big\| \gbteps - G_{\epsilon}(\ueps, \reps)&\big\|^2_{L^2((0,T)\times \geps)} 
\\
&\le C\epsilon^2 \sum_{j=1}^N \sum_{{\bf k} \in K_{\epsilon}} \int_{t^{j-1}}^{t^j}\int_{\geps\left(\epsilon k\right)} \left| g \left(\ujoeps , \frac{\rjoeps}{\epsilon}\right) - g \left(\ueps , \frac{\reps}{\epsilon}\right) \right|^2d\sigma dt.
\\
&= C\epsilon^2 \sum_{j=1}^n \int_{t^{j-1}}^{t^j} \left\|g\left( \ujoeps , \frac{\rjoeps}{\epsilon}\right) - g\left(\ueps , \frac{\reps}{\epsilon}\right)\right\|^2_{L^2(\geps)} dt.
\end{align*}
Using the Lipschitz continuity of $g$, we obtain
\begin{align*}
\big\| \gbteps& - G_{\epsilon}(\ueps, \reps)\big\|^2_{L^2((0,T)\times \geps)} 
\\
&\le C \epsilon^2  \left(\big\|\ubteps (\cdot - \dt ) - \ubteps \big\|^2_{L^2((0,T)\times \geps)} + \big\|\rbteps (\cdot - \dt ) - \rbteps \big\|^2_{L^2((0,T)\times \geps)} \right).
\end{align*}
We emphasize that for $t \kl 0$ we have $\ubteps = \uepsin$ and $\rbteps = \repsin$.
Because of the convergence results from Corollary \ref{ConvergenceUepsdt} and \ref{ConvergenceRepsDt}, as well as the Kolmogorov compactness theorem, the right-hand side tends to $0$ for $\dt \to 0$. Hence, we have $\partial_t \rhteps = \gbteps \rightarrow G_{\epsilon}(\ueps , \reps)$ in $L^2((0,T)\times \geps)$ for $\dt \to 0$. Since $\gbteps \in L^{\infty}((0,T)\times \geps)$, the dominated convergence theorem of Lebesgue implies that the convergence also holds up to a subsequence in $L^p((0,T)\times \geps)$ for all $p \in (1,\infty)$. 

For the convergence $\eqref{ConvergenceDepsDt}$ we first notice that
\begin{align*}
\dbteps = \jbteps \left[\fbteps \right]^{-1} D(\sbteps) \left[\fbteps\right]^{-1},
\end{align*}
and therefore, due to the convergence results from Corollary \ref{ConvergenceRepsDt} and \ref{CorollaryConvergenceJdteps}, we obtain $\dbteps \rightarrow D_{\epsilon}$ almost everywhere in $(0,T)\times \oe$ up to a subsequence. Since $\dbteps$ is essential bounded, the dominated convergence theorem of Lebesgue implies $\dbteps \rightarrow D_{\epsilon}$ in $L^p((0,T)\times \oe)$. The last convergence $\eqref{ConvergenceVepsDt}$ follows by similar arguments, where we here  additionally  use the strong convergence (and therefore the pointwise almost everywhere convergence up to a subsequence) of $\partial_t \rhteps$ proved above. We emphasize that due to the strong convergence of $\partial_t \rhteps$ is also valid in $L^p((0,T)\times \Omega)$, since it is constant on every micro cell $\epsilon (Y + {\bf k})$.
\end{proof}

As a consequence of the previous results, we can easily prove the existence result in Theorem \ref{ExistenceTheorem}:
\begin{proof}[Proof of Theorem \ref{ExistenceTheorem}]
This follows directly by passing to the limit in the variational equation $\eqref{DiskVarEquationInterpolated}$ and in $\eqref{DefMicProblFixedDomainODEReps}$. To establish the initial condition, we just integrate by parts with respect to time in $\eqref{MicProbFixedDomain}$ and $\eqref{DiskVarEquationInterpolated}$. Since this is standard, we skip the details.
\end{proof}
In the following Lemma we summarize the \textit{a priori} estimates for the weak solution $(\ueps,\reps)$ which will be necessary for the derivation of the macroscopic model:
\begin{lemma}\label{AprioriEstimatesSummary}
A constant $C > 0$ not depending on $\epsilon$ exists such that the weak solution $(\ueps,\reps)$ from Theorem \ref{ExistenceTheorem} satisfies the \textit{a priori} estimates
\begin{align*}
\epsilon \Vert \partial_t \ueps \Vert_{L^2((0,T),H^1(\oe)')} + \Vert \ueps \Vert_{L^{\infty}((0,T),L^2(\oe))}  +  \Vert \ueps \Vert_{L^2((0,T),H^1(\oe))} &\le C,
\\
\frac{1}{\epsilon}\Vert \reps \Vert_{L^{\infty}((0,T)\times \oe)} + \frac{1}{\epsilon}\Vert \partial_t \reps \Vert_{L^{\infty}((0,T)\times \oe)} &\le C,
\\
\Vert \partial_t (\jeps \ueps) \Vert_{L^2((0,T),H^1(\oe)')} + \Vert \jeps \ueps \Vert_{L^{\infty}((0,T),L^2(\oe))} \qquad & \\
+ \epsilon \Vert \nabla (\jeps \ueps)\Vert_{L^2((0,T),L^2(\oe))} &\le C.
\end{align*}
Also, for the Hanzawa transformation $\seps$ and the Jacobi-determinant $\jeps$ one has 
\begin{align*}
\Vert \seps \Vert_{L^{\infty}((0,T)\times \oe)} + \Vert \nabla \seps \Vert_{L^{\infty}((0,T)\times \oe)} + \frac{1}{\epsilon}\Vert \partial_t \seps \Vert_{L^{\infty}((0,T)\times \oe)} &\le C,
\\
\Vert \jeps \Vert_{L^{\infty}((0,T)\times \oe)} + \Vert \partial_t \jeps \Vert_{L^{\infty}((0,T)\times \oe)} + \epsilon \Vert \nabla \jeps \Vert_{L^{\infty}((0,T)\times \oe)} &\le C.
\end{align*}
Finally, there exists a constant $c_0 > 0$ such that (independently of $\epsilon$) 
\begin{align*}
c_0 \le \jeps(t,x) \quad \mbox{ for almost every } (t,x) \in (0,T)\times \oe.
\end{align*}
\end{lemma}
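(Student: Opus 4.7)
The plan is to obtain each estimate by passing to the $\dt\to 0$ limit in the corresponding uniform-in-$\dt$ bound already established for the Rothe interpolants, exploiting the weak/weak$^\ast$ lower semicontinuity of the respective norms on the convergent subsequences identified in Corollaries \ref{ConvergenceUepsdt}, \ref{ConvergenceRepsDt}, \ref{CorollaryConvergenceJdteps} and the subsequent corollary for $\whteps$. Since all these Rothe-level estimates are already proved with constants independent of both $\dt$ and $\epsilon$, no new computation on the continuous problem is needed; the only task is to transfer the bounds to the limit functions.

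Concretely, I would proceed as follows. First, for the estimates on $\ueps$: the bounds on $\Vert\ueps\Vert_{L^\infty((0,T),L^2(\oe))}$, $\Vert\ueps\Vert_{L^2((0,T),H^1(\oe))}$ and $\epsilon\Vert\partial_t\ueps\Vert_{L^2((0,T),H^1(\oe)')}$ follow directly from Corollary \ref{ConvergenceUepsdt}, using that $\uhteps\rightharpoonup\ueps$ weakly in $L^2((0,T),H^1(\oe))$ and weakly$^\ast$ in $L^\infty((0,T),L^2(\oe))$, and that $\epsilon\,\partial_t\uhteps$ is bounded in $L^2((0,T),H^1(\oe)')$ by Lemma \ref{AprioriUepsDt}. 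Second, for $\reps$, the bounds on $\Vert\reps\Vert_{L^\infty}$ and $\Vert\partial_t\reps\Vert_{L^\infty}$ of order $\epsilon$ are obtained by combining the weak$^\ast$ convergence in Corollary \ref{ConvergenceRepsDt} with Lemma \ref{AprioriRepsDt}; since $\Vert\rhteps\Vert_{W^{1,\infty}}\le C\epsilon$ uniformly in $\dt$, the weak$^\ast$ lower semicontinuity of the $L^\infty$ norm (viewed as the dual norm on $L^1$) yields the same bound for $\reps$ and $\partial_t\reps$. Third, for $\jeps\ueps$, the last unnamed corollary before Section 4.3 provides precisely $\whteps\rightharpoonup\jeps\ueps$ in $L^2((0,T),H^1(\oe))$, $\partial_t\whteps\rightharpoonup\partial_t(\jeps\ueps)$ in $L^2((0,T),H^1(\oe)')$, together with the uniform estimates from Lemma \ref{AprioriEstimatesWeps}, which pass to the limit by the same semicontinuity argument.

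The bounds on $\seps$ and $\jeps$ are then immediate from their explicit definitions together with the $L^\infty$ bound on $\reps$. Indeed, from \eqref{DefinitionSeps}, $\seps(t,x)=x+(\reps-\epsilon\oR)\chi_0(x/\epsilon)\nu_0(x/\epsilon)$, so $\Vert\seps\Vert_{L^\infty}$ is controlled by $\Vert x\Vert_{L^\infty(\Omega)}+\epsilon\Vert\chi_0\nu_0\Vert_\infty$, while
\[
\nabla\seps(t,x)=I_n+\tfrac{\reps-\epsilon\oR}{\epsilon}\,\nabla_y(\chi_0\nu_0)(x/\epsilon),
\]
whose $L^\infty$ norm is bounded since $(\reps-\epsilon\oR)/\epsilon\in[\uR-\oR,0]$ and $\chi_0\nu_0\in C^\infty_{\mathrm{per}}(Y)$. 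The estimate on $\partial_t\seps$ follows from $\partial_t\seps=\partial_t\reps\cdot\chi_0(x/\epsilon)\nu_0(x/\epsilon)$ together with the $L^\infty$ bound on $\partial_t\reps$ of order $\epsilon$. The corresponding bounds for $\jeps$, $\partial_t\jeps$ and $\nabla\jeps$ follow from Corollary \ref{CorollaryConvergenceJdteps} (the bound $\epsilon\Vert\nabla\jeps\Vert_\infty\le C$ being a direct consequence of \eqref{FormulaRepresentationReps} and of $\Vert\reps\Vert_\infty\le C\epsilon$). Finally, the uniform positive lower bound $c_0\le\jeps$ comes from the pointwise estimate \eqref{DefinitionTilde}, which gives $\jeps\ge(\uR/\oR)^{n-1}>0$ independently of $t,x,\epsilon$; this bound is preserved in the limit $\dt\to 0$ since $\jbteps\to\jeps$ in $L^\infty((0,T)\times\oe)$ by Corollary \ref{CorollaryConvergenceJdteps}.

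There is no genuine obstacle here: the entire lemma is a bookkeeping step that collects, for the continuous solution, the estimates already proved for the time-discrete interpolants with $\dt,\epsilon$-independent constants. The only mildly delicate points are (i) making sure the weak$^\ast$ lower semicontinuity is used in the correct dual pairing for the $L^\infty$-type bounds on $\reps,\partial_t\reps$, and (ii) verifying that the lower bound on $\jeps$ passes to the limit, which is immediate from the uniform convergence in Corollary \ref{CorollaryConvergenceJdteps}.
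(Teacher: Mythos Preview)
Your proposal is correct and matches the paper's approach: the paper in fact gives no proof of this lemma at all, presenting it explicitly as a summary of the estimates already contained in Corollaries~\ref{ConvergenceUepsdt}, \ref{ConvergenceRepsDt}, \ref{CorollaryConvergenceJdteps}, and the unnumbered corollary for $\whteps$. Your write-up simply spells out the (routine) passage to the limit via weak/weak$^\ast$ lower semicontinuity and the explicit formulas for $\seps$ and $\jeps$, which is exactly what the paper leaves implicit; one small simplification is that the $L^\infty$ bound on $\partial_t\reps$ can be read off directly from the limit ODE $\partial_t\reps=G_\epsilon(\ueps,\reps)$ and $\Vert g\Vert_{L^\infty}<\infty$, rather than via weak$^\ast$ lower semicontinuity.
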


Additionally, the differential equation $\eqref{MicroscopicModel_ODE}$ for $\reps$ and the boundedness of $g$ immediately implies with the mean value theorem the following uniform estimates:
\begin{corollary}
\label{HoelderEstimatesRepsJeps}
It holds that
\begin{align*}
    \Vert \reps \Vert_{C^{0,1}([0,T],L^{\infty}(\Omega))}  +  \Vert \jeps \Vert_{C^{0,1}([0,T],L^{\infty}(\Omega))}  \le C.
\end{align*}
\end{corollary}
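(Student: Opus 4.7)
The claim is a routine consequence of the \emph{a priori} estimates collected in Lemma~\ref{AprioriEstimatesSummary} together with the ODE \eqref{MicroscopicModel_ODE}. My plan is to treat $\reps$ and $\jeps$ separately, in that order, since the bound on $\jeps$ reduces to the bound on $\reps$ via the Lipschitz dependence of the determinant on its matrix argument.

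For $\reps$, I would start by observing that Assumption~\ref{AssumptionODEg} gives $g \in L^\infty(\R^2)$, so
\begin{align*}
|G_\epsilon(t,x,\ueps,\reps)| \;\le\; \frac{\epsilon}{|\epsilon\Gamma|}\,\|g\|_{L^\infty}\,\bigl|\geps(t,\epsilon[x/\epsilon])\bigr| \;\le\; C\epsilon,
\end{align*}
since $|\epsilon\Gamma|$ and $|\geps|$ are both of order $\epsilon^{n-1}$ (with ratios of size one because $\reps\in(\epsilon\uR,\epsilon\oR)$). In view of Remark~\ref{BemerkungRegReps}, $\partial_t\reps = G_\epsilon$ holds for every $t\in[0,T]$ and a.e.~$x\in\Omega$, so integrating in $t$ yields
\begin{align*}
|\reps(t_2,x) - \reps(t_1,x)| \;\le\; \int_{t_1}^{t_2}|G_\epsilon(s,x,\ueps,\reps)|\,ds \;\le\; C\epsilon\,|t_2 - t_1|
\end{align*}
for a.e.~$x\in\Omega$. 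Combined with the pointwise bound $\reps\le\epsilon\oR\le C$, this gives $\|\reps\|_{C^{0,1}([0,T],L^\infty(\Omega))}\le C$.

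For $\jeps$, I would use the representation \eqref{FormulaRepresentationReps}, namely
\begin{align*}
\jeps(t,x) \;=\; \det\!\Bigl(I_n + \tfrac{\reps(t,x)-\epsilon\oR}{\epsilon}\,\nabla_y(\chi_0\nu_0)(x/\epsilon)\Bigr).
\end{align*}
Since $\frac{\reps-\epsilon\oR}{\epsilon}\in[\uR-\oR,0]$ is bounded and $\nabla_y(\chi_0\nu_0)$ is a bounded (smooth, periodic) matrix field, the argument of $\det$ ranges over a bounded set of matrices. On such a set, $\det$ is Lipschitz, so
\begin{align*}
|\jeps(t_2,x)-\jeps(t_1,x)| \;\le\; C\,\frac{|\reps(t_2,x)-\reps(t_1,x)|}{\epsilon}\,\|\nabla_y(\chi_0\nu_0)\|_{L^\infty} \;\le\; C\,|t_2-t_1|,
\end{align*}
where the last step uses the Lipschitz estimate for $\reps$ derived above with the explicit $\epsilon$-gain. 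Together with the uniform bound $\|\jeps\|_{L^\infty((0,T)\times\oe)}\le C$ from Lemma~\ref{AprioriEstimatesSummary}, this yields $\|\jeps\|_{C^{0,1}([0,T],L^\infty(\Omega))}\le C$.

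There is no real obstacle: the only point requiring some care is ensuring that the pointwise (in $x$) Lipschitz estimates in $t$ really upgrade to estimates in the $L^\infty$-norm in space, which holds because the constants $C$ in both bounds are independent of $x$ (the $L^\infty$ bound on $g$ and the $L^\infty$ bound on $\nabla_y(\chi_0\nu_0)$).
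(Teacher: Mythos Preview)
Your proof is correct and follows the same approach as the paper, which simply states that the estimate follows from the ODE \eqref{MicroscopicModel_ODE} (equivalently \eqref{DefMicProblFixedDomainODEReps}), the boundedness of $g$, and the mean value theorem. Your treatment of $\jeps$ via the local Lipschitz continuity of the determinant applied to \eqref{FormulaRepresentationReps} is exactly the argument used elsewhere in the paper (cf.\ the proof of \eqref{EstimatesTimeDiscCoeffDtJ} in Lemma~\ref{LemmaEstimatesTimeDisCoeff}), and your explicit use of the $\epsilon$-gain in the Lipschitz bound for $\reps$ to cancel the $\epsilon^{-1}$ in the formula for $\jeps$ makes the mechanism transparent.
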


\section{Derivation of the macroscopic model}
\label{SectionDerivationMacroModel}

In this section we show that the sequence of micro solutions $(\ueps,\reps)$ converges in a suitable sense to the solution of the macro-model $\eqref{MacroModelStrong}$. For this we prove two-scale compactness results which allow to pass to the limit in the variational equation for the micro model $\eqref{WeakFormulationMicroscopicModel}$. However, the \textit{a priori} estimates in Lemma \ref{AprioriEstimatesSummary} only guarantee weak convergence for $\ueps$ and $\reps$, and this is not enough to pass to limit in $\eqref{WeakFormulationMicroscopicModel}$, since also the strong convergence of the  coefficients, which depend in a nonlinear way on the micro solution itself, is needed. Also to pass to the limit in the boundary term and in equation $\eqref{DefMicProblFixedDomainODEReps}$ strong convergence is necessary. We refer the reader to the Appendix \ref{SectionTwoScaleConvergence} for a short overview on the two-scale convergence.


\subsection{Compactness results}

 The crucial point is to obtain the strong convergence of $\ueps$, $\reps$, and  $\partial_t \reps$. Standard arguments for $\ueps$ of Aubin-Lions-type, see \cite{MeirmanovZimin}, fail, since we have no uniform bound for the time derivative $\partial_t \ueps$ with respect to $\epsilon$ (remember, the norm is of order $\epsilon^{-1}$).  Hence, it makes sense to consider also the sequence $\jeps \ueps$ which has bounded time derivative, but  which gradient behaves badly with respect to $\epsilon$. We will see that together with the uniform bounds on $\jeps$ this is enough to guarantee to strong convergence of $\ueps$ via a variational argument. The strong convergence of $\reps$ is obtained by using the Kolmogorov-compactness theorem, see \cite{Brezis}, based on uniform bounds for the difference of small shifts. Combining the strong two-scale convergence of $\ueps$ and $\reps$, we are able to show the strong convergence of $\partial \reps$ and identify its limit.
Before proceeding, we observe that, since $\oe$ is connected, there exists an extension of $\ueps$ to the whole domain $\Omega$ (also denoted $\ueps$ to avoid an excess of notations) such that (\cite{Acerbi1992,CioranescuSJPaulin}, see Section \ref{SubsectionStrongConvergenceUeps} for more details)
\begin{align}\label{AprioriExtension}
\Vert \ueps \Vert_{L^2((0,T),H^1(\Omega))} \le C.
\end{align}

\subsubsection{Weak two-scale compactness results}

We start with some (weak) two-scale convergence results which follow directly from the \textit{a priori} estimates in Lemma \ref{AprioriEstimatesSummary}. For the definition and some basic compactness results for the two-scale convergence see the Appendix \ref{SectionTwoScaleConvergence}:

\begin{corollary}\label{WeakTSCompactnessResults}\
Up to a subsequence the following convergence results for the microscopic solutions $(\ueps,\reps)$ of problem $\eqref{MicroscopicModel}$ are valid:
\begin{enumerate}
[label = (\roman*)]
\item There exist $u_0 \in L^2((0,T),H^1(\Omega))$ and $u_1 \in L^2((0,T)\times \Omega,H^1_{\per}(Y^{\ast})/\R))$, such that  
\begin{align*}
\ueps &\rightarrow u_0 &\mbox{ in the two-scale sense in }  L^2,
\\
\nabla \ueps &\rightarrow \nabla_x u_0 + \nabla_y u_1 &\mbox{ in the two-scale sense in } L^2,
\\
\ueps &\rightharpoonup^* u_0 &\mbox{ weakly}^* \mbox{ in } L^{\infty}((0,T),L^2(\Omega)).
\end{align*}
\item There exists $R_0 \in W^{1,\infty}((0,T),L^2(\Omega))$ with $R_0 \in L^{\infty}((0,T)\times \Omega)$ and $\partial_t R_0 \in L^{\infty}((0,T)\times \Omega)$, such that for every $p \in [1,\infty)$
\begin{align*}
\epsilon^{-1} \reps &\rightharpoonup^* R_0  &\mbox{ weakly}^* \mbox{ in } W^{1,\infty}((0,T),L^2(\Omega)),
\\
\epsilon^{-1} \reps &\rightarrow R_0 &\mbox{ in the two-scale sense in } L^p,
\\
\epsilon^{-1} \partial_t\reps &\rightarrow \partial_t R_0 &\mbox{ in the two-scale sense in } L^p.
\end{align*}
Additionally the two-scale convergences above also hold in the two-scale sense on $\geps$ in $L^p$. 
\item Defining  
\begin{align*}
S_0(t,x,y) &:= y + \big(R_0(t,x) - \oR\big) (\chi_0 \nu_0) (y),
\end{align*}
it holds for every $p \in (1,\infty)$  that
\begin{align*}
\seps &\rightarrow x &\mbox{ in } L^{\infty}((0,T)\times \Omega),
\\
\nabla \seps &\rightarrow \nabla_y S_0 &\mbox{ in the two-scale sense in } L^p.
\end{align*}
\end{enumerate}
\end{corollary}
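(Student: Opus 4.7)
The plan is to establish each of the three parts by combining the a priori estimates in Lemma \ref{AprioriEstimatesSummary} with the standard two-scale compactness tools recalled in Appendix \ref{SectionTwoScaleConvergence}, exploiting the structural fact that $\reps$ is constant on every micro cell. For \textbf{(i)}, I would invoke the uniform bound $\|\ueps\|_{L^2((0,T),H^1(\oe))}\le C$ together with the extension \eqref{AprioriExtension}, which gives $\|\ueps\|_{L^2((0,T),H^1(\Omega))}\le C$. The standard two-scale compactness theorem for $H^1$-bounded sequences then produces $u_0\in L^2((0,T),H^1(\Omega))$ and a corrector $u_1\in L^2((0,T)\times\Omega; H^1_{\mathrm{per}}(Y^{\ast})/\R)$ such that $\ueps \to u_0$ and $\nabla\ueps \to \nabla_x u_0 + \nabla_y u_1$ in the two-scale sense, while the weak-$*$ convergence in $L^{\infty}((0,T),L^2(\Omega))$ is immediate from the corresponding uniform bound together with the duality $L^\infty((0,T),L^2(\Omega)) = (L^1((0,T),L^2(\Omega)))^*$.

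For \textbf{(ii)}, the bounds $\epsilon^{-1}\|\reps\|_{L^\infty((0,T)\times \Omega)}$ and $\epsilon^{-1}\|\partial_t\reps\|_{L^\infty((0,T)\times \Omega)}\le C$ (defined on all of $\Omega$ via Remark \ref{BemerkungRegReps}) yield, up to a subsequence, weak-$*$ convergence of $\epsilon^{-1}\reps$ in $W^{1,\infty}((0,T),L^2(\Omega))$ to some $R_0$. The key structural observation is that $\reps$ and $\partial_t\reps$ carry no fine-scale $y$-dependence, being constant on each $\epsilon(Y+\bf k)$; hence their two-scale limits must be $y$-independent and coincide with the weak limits $R_0$ and $\partial_t R_0$. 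Two-scale convergence in $L^p$ for any $p<\infty$ then follows from the uniform $L^\infty$ bound and the standard two-scale compactness theorem. For the convergence on $\geps$, I would use the general two-scale compactness results on periodically oscillating surfaces: cell-constancy reduces the surface integral on $\geps^{\bf k}$ against an admissible test $\phi(x,x/\epsilon)$ to the cell value of $\epsilon^{-1}\reps$ times $\int_{\geps^{\bf k}} \phi(x,x/\epsilon)\,d\sigma$, which matches the bulk two-scale computation in the limit.

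Part \textbf{(iii)} is a direct calculation from the representation formulas. Since $\seps(t,x) = x + (\reps - \epsilon\oR)\chi_0(x/\epsilon)\nu_0(x/\epsilon)$ with $|\reps - \epsilon\oR|\le C\epsilon$ and $\chi_0\nu_0$ bounded, one reads off $\|\seps - x\|_{L^\infty}\le C\epsilon$. For the gradient, the explicit expression
\[
\nabla \seps(t,x) = I_n + \epsilon^{-1}(\reps(t,x) - \epsilon\oR)\,[\nabla_y(\chi_0\nu_0)](x/\epsilon)
\]
exhibits it as the product of the cell-constant factor $\epsilon^{-1}(\reps - \epsilon\oR)$, two-scale converging to the $y$-independent $R_0 - \oR$ by part (ii), and the fixed $Y$-periodic admissible oscillating factor $[\nabla_y(\chi_0\nu_0)](x/\epsilon)$. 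The standard product rule for two-scale convergence of a (bounded) two-scale convergent sequence against such an admissible test function identifies the limit as $I_n + (R_0 - \oR)\nabla_y(\chi_0\nu_0)(y) = \nabla_y S_0$.

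The main obstacle I foresee lies in part (ii), specifically in rigorously justifying the two-scale convergence on the oscillating surface $\geps$ for both $\epsilon^{-1}\reps$ and $\epsilon^{-1}\partial_t\reps$. The usual surface two-scale convergence machinery is formulated for sequences defined on a fixed, periodically scaled reference surface (here $\Gamma$ scaled to $\epsilon$), but cell-constancy of $\reps$ again reduces the problem to a routine verification: the surface trace equals the bulk value on each cell, so the surface integral factorises, and the bulk convergence already established yields the desired limit. Once this reduction is carried out, all three parts follow from the two-scale framework without further technical complications.
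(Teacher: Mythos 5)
Your proposal is correct and follows essentially the same approach as the paper: invoke the $\epsilon$-uniform \textit{a priori} estimates from Lemma \ref{AprioriEstimatesSummary} together with the standard two-scale compactness results of Lemma \ref{BasicTwoScaleCompactness}, and use the cell-constancy of $\reps$ to identify $R_0$ as $y$-independent (which the paper explicitly singles out as the one point worth noting). The paper's proof is just a one-paragraph appeal to these facts, so your more detailed walk-through—including the extension via \eqref{AprioriExtension}, the $L^\infty$ bounds for part (ii), and the explicit product argument for $\nabla\seps$ in part (iii)—is simply an expansion of the same route, with no genuine divergence.
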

\begin{proof}
These results follow directly from the \textit{a priori} estimates in Lemma \ref{AprioriEstimatesSummary} and standard two-scale compactness results from Lemma \ref{BasicTwoScaleCompactness} in the Appendix \ref{SectionTwoScaleConvergence}. We point out that $R_0$ is independent of the microscopic variable $y \in Y$, since $\reps$ is constant on every microscopic cell $\epsilon (Y + {\bf k})$ with ${\bf k} \in K_{\epsilon}$.
\end{proof}

\subsubsection{Strong convergence of $\reps$}

Next, we prove the strong convergence of $\epsilon^{-1} \reps$ in the $L^p$-sense, where we make use of the Kolmogorov-compactness theorem, see for example \cite{Brezis}. We first introduce the following notation for $ 0 \kl h$: 
\begin{align*}
\Omega^h := \{x \in \Omega \, : \, \mathrm{dist}(x,\partial \Omega) \gr h\}.
\end{align*}
Further, for $\Bell \in \Z^n$ we define
\begin{align*}
K_{\epsilon}^{\Bell} := \left\{ {\bf k} \in \Z^n \, : \, \epsilon(Y + {\bf k} + {\Bell}) \cup \epsilon(Y + {\bf k}) \subset \Omega \right\},
\end{align*}
and 
\begin{align*}
\oe^{\Bell}:= \mathrm{int}\bigcup_{{\bf k}\in K_{\epsilon}^{\Bell}} \epsilon (\overline{Y^{\ast}} + {\bf k}),	\qquad\geps^{\Bell} := \bigcup_{{\bf k}\in K_{\epsilon}^{\Bell} } \epsilon (\Gamma + {\bf k}).
\end{align*}
We emphasize that $\oe^{\Bell} \subset \Omega^{\vert \epsilon {\Bell}\vert}$ and  for $\vert \epsilon {\Bell} \vert \kl \frac{h}{2}$ and $\epsilon $ small enough it holds that 
\begin{align}\label{SubsetOmegah}
\Omega^h \subset \mathrm{int} \bigcup_{{\bf k}\in K_{\epsilon}^{\Bell}} \epsilon(\overline{Y}  + {\bf k}).
\end{align}

\begin{proposition}\label{StrongConvergenceReps}
For all $p \in [1,\infty)$ it holds up to a subsequence  that
\begin{align*}
\epsilon^{-1}\reps \rightarrow R_0 \quad \mbox{ in } L^p((0,T)\times \Omega).
\end{align*}
Additionally, we have $R_0 \in L^{\infty}((0,T),H^1(\Omega))$.
\end{proposition}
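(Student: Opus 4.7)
The plan is to reduce the strong $L^p$ convergence to a Rellich/Kolmogorov-type compactness for a smooth interpolant of $\epsilon^{-1}\reps$, the main input being a quantitative grid-shift estimate of the form $\epsilon^{-1}\|\reps(t,\cdot+\epsilon\Bell)-\reps(t,\cdot)\|_{L^2(\Omega^h)}\leq C|\epsilon\Bell|$ for $\Bell\in\Z^n$ with $|\epsilon\Bell|<h/2$. To derive this I would start from the integrated form of \eqref{DefMicProblFixedDomainODEReps}, take the difference of $\reps(\cdot,x+\epsilon\Bell)$ and $\reps(\cdot,x)$, and in the sphere integral defining $G_\epsilon$ perform the substitution $z\mapsto z+\epsilon\Bell$. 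Using the Lipschitz continuity of $g$ from Assumption \ref{AssumptionODEg} and the constancy of $\reps$ on each microcell, the integrand $\epsilon^{-1}|G_\epsilon(s,x+\epsilon\Bell)-G_\epsilon(s,x)|$ splits into a diagonal piece controlled by $\epsilon^{-1}|\reps(s,x+\epsilon\Bell)-\reps(s,x)|$ and a cell-averaged surface shift $|\epsilon\Gamma|^{-1}\int_{\Gamma_\epsilon(\epsilon[x/\epsilon])}|\ueps(s,z+\epsilon\Bell)-\ueps(s,z)|\,d\sigma$. After squaring, using Jensen's inequality and summing over cells (so $\epsilon^n/|\epsilon\Gamma|\sim\epsilon$), the scaled trace inequality applied to $v:=\ueps(\cdot+\epsilon\Bell)-\ueps\in H^1(\Omega)$, together with the standard $H^1$-shift bound $\|v(s)\|_{L^2(\Omega^h)}\leq|\epsilon\Bell|\|\nabla\ueps(s)\|_{L^2(\Omega)}$ for the extension from \eqref{AprioriExtension}, bounds the surface term by $C(|\epsilon\Bell|^2+\epsilon^2)\|\nabla\ueps(s)\|_{L^2(\Omega)}^2$. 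Combining with the initial-data shift from Assumption \ref{AssumptionInitialValueReps} and Grönwall's lemma yields the announced estimate (the additive $\epsilon$ being harmless once $|\epsilon\Bell|\geq\epsilon$).

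With the grid-shift estimate in hand, I would introduce the continuous multilinear ($Q^1$) interpolant $\tilde R_\epsilon(t,\cdot)$ of the cell-centre values $\{\epsilon^{-1}\reps(t,\epsilon\m_{\epsilon,x})\}_{\bf k\in K_\epsilon}$. The shift estimate translates directly into $\|\nabla\tilde R_\epsilon\|_{L^\infty((0,T);L^2(\Omega^h))}\leq C$, a standard approximation-theoretic bound (comparing a piecewise constant function with its $Q^1$ interpolant) gives $\|\tilde R_\epsilon-\epsilon^{-1}\reps\|_{L^\infty((0,T);L^2(\Omega^h))}=O(\epsilon)$, and $\partial_t\tilde R_\epsilon$, being the $Q^1$-interpolant of the uniformly bounded $\epsilon^{-1}\partial_t\reps(t,\epsilon\m_{\bf k})$ from Lemma \ref{AprioriEstimatesSummary}, is bounded in $L^\infty$. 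Aubin–Lions (or equivalently Rellich in space plus the Lipschitz time-regularity) then gives a subsequence with $\tilde R_\epsilon\to R^*$ strongly in $L^2((0,T)\times\Omega^h)$, and the weak two-scale convergence of $\epsilon^{-1}\reps$ to $R_0$ from Corollary \ref{WeakTSCompactnessResults}(ii) identifies $R^*=R_0$. Adding the $O(\epsilon)$ approximation error gives $\epsilon^{-1}\reps\to R_0$ in $L^2((0,T)\times\Omega^h)$; interpolation with the uniform $L^\infty$-bound $\uR\leq\epsilon^{-1}\reps\leq\oR$ promotes this to $L^p$ for every $p\in[1,\infty)$. A diagonal argument in $h\to 0$, combined with the smallness of $|\Omega\setminus\Omega^h|$ and the $L^\infty$-bound, removes the interior restriction. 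Finally, weak-$*$ lower semicontinuity applied to the uniform $H^1$-bound on $\tilde R_\epsilon$ yields $R_0\in L^\infty((0,T);H^1(\Omega))$.

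The main obstacle is the grid-shift estimate itself: the nonlocal coupling between the shifts of $\reps$ and $\ueps$ through $G_\epsilon$ must be handled in such a way that the surface normalisation $|\epsilon\Gamma|^{-1}$, the cell-summation factor $\epsilon^n$, and the scaled trace inequality combine exactly to give a bound linear in $|\epsilon\Bell|$. Since only $\nabla\ueps\in L^2$ is available, we cannot expect a better-than-first-order shift estimate for $\ueps$, which in turn forces the precise $\sim|\epsilon\Bell|^2+\epsilon^2$ structure above and explains why a direct Kolmogorov criterion applied to $\epsilon^{-1}\reps$ (whose within-cell shifts cannot be made uniformly small in $\epsilon$) is unsuitable and must be channelled through the smooth interpolant.
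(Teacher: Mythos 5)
Your argument is correct and the core quantitative input is the same as in the paper: you derive the grid-shift estimate $\epsilon^{-1}\|\reps(t,\cdot+\epsilon\Bell)-\reps(t,\cdot)\|_{L^2(\Omega^h)}\le C(|\epsilon\Bell|+\epsilon)$ in exactly the paper's way (split $G_\epsilon$ via Lipschitz continuity of $g$ into a diagonal $R$-shift plus a cell-averaged $u$-shift on $\geps$, then scaled trace + $H^1$ shift bound + the initial-data shift assumption + Gr\"onwall). Where you diverge is the post-processing: you channel the estimate through a $Q^1$ interpolant and invoke Aubin--Lions (plus an $O(\epsilon)$ interpolation error and an identification via the weak two-scale limit), whereas the paper applies the Kolmogorov criterion to $\epsilon^{-1}\reps$ directly. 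Both routes are valid, and your interpolant also delivers the $L^\infty H^1$ bound for $R_0$ by weak-$*$ lower semicontinuity, where the paper instead passes to the limit in the shift estimate itself; these are essentially equivalent, though in your version you should note that the $\nabla\tilde R_\epsilon$ bound is on $\Omega^h$ and let $h\to 0$ at the end.

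One point you should retract, however: your final claim that a ``direct Kolmogorov criterion applied to $\epsilon^{-1}\reps$ \dots is unsuitable'' is not right, and the paper's proof shows why. The obstruction you identify --- the additive $+\epsilon$ in the shift bound, which does not vanish as $|\xi|\to 0$ for a fixed $\epsilon$ --- is handled by a simple splitting: for any $\epsilon_0>0$ the bound $C(|\xi|+\epsilon)\le C(\delta_0+\epsilon_0)$ makes the shift uniformly small over all $\epsilon\le\epsilon_0$ once $|\xi|<\delta_0$, while only finitely many members of the sequence satisfy $\epsilon>\epsilon_0$, and each of those is a single fixed $L^2$ function whose translations converge in $L^2$ as $|\xi|\to 0$. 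Taking the minimum of the resulting $\delta$'s yields the uniform smallness that the Kolmogorov criterion requires. So the interpolant is a clean alternative, not a necessity, and you should present it as such rather than as a fix to a supposed defect in the direct approach.
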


\begin{proof}
It is enough to show that  $\reps \rightarrow R_0$ in $L^2((0,T)\times \Omega)$. Then, the desired result follows from the dominated convergence theorem of Lebesgue. We use the Kolmogorov-compactness theorem. Let $s \in \R$ and $\xi \in \R^n$, then we have to show:
\begin{align*}
\sup_{\epsilon > 0} \epsilon^{-1}\Vert \reps(t + s,x + \xi) - \reps(t,x) \Vert_{L^2((0,T) \times \Omega)} \overset{(s,\xi)\to 0}{\longrightarrow} 0, 
\end{align*}
where we can extend $\reps$ by zero to $\R \times \R^n$. Since $\epsilon^{-1}\reps$ is uniformly bounded with respect to $\epsilon$ in $L^{\infty}((0,T)\times \Omega)$, it is enough to show for $0 \kl h \ll 1$ and $\vert s \vert +\vert \xi \vert \kl \frac{h}{2}$ that
\begin{align*}
\sup_{\epsilon > 0} \epsilon^{-1} \Vert \reps(t + s , x + \xi) - \reps(t,x)\Vert_{L^2((h,T-h)\times \Omega^h)} \overset{(s,\xi)\to 0}{\longrightarrow } 0.
\end{align*}
 First of all, we have
\begin{align*}
\Vert &\reps(t + s, x + \xi) - \reps(t,x)\Vert_{L^2((h,T-h)\times \Omega^h)} 
\\
&\le \Vert \reps(t+s,x) - \reps(t,x)\Vert_{L^2((h,T-h)\times \Omega) } + \Vert \reps(t,x+\xi) - \reps(t,x) \Vert_{L^2((0,T)\times \Omega^h)}.
\end{align*}
For the first term we obtain from the essential boundedness of $g$:
\begin{align*}
\epsilon^{-2} &\Vert \reps(t +s , x) - \reps(t,x)\Vert^2_{L^2((h,T-h) \times \Omega)}
\\
&= \int_0^{T-s} \frac{1}{\vert\epsilon \Gamma\vert^2} \left( \int_t^{t+s} \int_{\geps\left(t,\epsilon \bfxe\right)} g\left(\ueps(\tau,z) , \frac{\reps(\tau,z)}{\epsilon}\right) dz d\tau \right)^2 dt 
\\
&\le C s^2 \ \Vert g\Vert_{L^{\infty}(\R \times \R^n)}^2.
\end{align*}
Let us consider the term including the shifts in the spatial variable. 
We have (see \cite[Proof of Theorem 13]{Gahn}) for $t \in (0,T)$
\begin{align}\label{AuxiliaryInequalityShiftsDisplacement}
\Vert \reps (t,x + \xi ) - \reps(t,x) \Vert_{L^2( \Omega^h)} \le \sum_{{\bf j}\in\{0,1\}^n} \left\Vert \reps\left(t,x+ \epsilon {\bf j} + \epsilon \left[\frac{\xi}{\epsilon}\right]\right) -\reps (t,x) \right\Vert_{L^2(\Omega^h)}.
\end{align}
Let $\Bell:= \Bell(\epsilon, {\bf j}):= {\bf j} +  \left[\dfrac{\xi}{\epsilon}\right]$. Then for  $t \in (0,T)$ we obtain with $\eqref{DefMicProblFixedDomainODEReps}$
\begin{align*}
\epsilon^{-2} \Vert \reps(t,x + \epsilon \Bell)& - \reps (t,x) \Vert^2_{L^2(\Omega^h)} \le C \epsilon^{-2} \Vert \repsin(x + \Bell \epsilon)- \repsin \Vert^2_{L^2(\Omega^h)}
\\
 +  C\int_{\Omega^h } \bigg[ \frac{1}{\vert\epsilon \Gamma\vert} & \int_0^t  \int_{\geps\left(t,\epsilon \bfxe\right)} g\left(\ueps(s,z + \epsilon l ) , \frac{\reps(s,z + \epsilon l)}{\epsilon}\right)   \\
&  -   g\left(\ueps(s,z),\frac{\reps(s,z)}{\epsilon}\right) dz ds \bigg]^2 dx
=: A_{\epsilon}^1 + A_{\epsilon}^2.
\end{align*}
For the first term $A_{\epsilon}^1$ including the initial value we immediately obtain from the Assumption \ref{AssumptionInitialValueReps} that $A_{\epsilon}^1 \le C \vert {\Bell} \epsilon\vert^2$. For $A_{\epsilon}^2$, using the trace inequality $\eqref{ScaledTraceInequality}$, the Lipschitz continuity of $g$, $\eqref{SubsetOmegah}$, and that $\reps$ constant on every micro cell gives
\begin{align*}
A_{\epsilon}^2 &\le \frac{C}{\vert\epsilon \Gamma \vert}   \int_{\Omega^h} \int_0^t \int_{\epsilon \Gamma} \left\vert \ueps\left(s,z + \epsilon \left[\fxe\right] + \epsilon {\Bell} \right)  - \ueps\left(s,z + \epsilon\left[\fxe\right] \right) \right\vert^2
\\
&\hspace{5em} + \epsilon^{-2}  \left\vert \reps\left(s,z + \epsilon \left[\fxe\right] + \epsilon {\Bell} \right) - \left(s,z + \epsilon\left[\fxe\right] \right)\right\vert^2 d\sigma_z ds dx 
\\
&\le  \sum_{{\bf k} \in K_{\epsilon}^{\Bell}} \bigg\{\frac{C}{\vert \epsilon \Gamma\vert}   \epsilon^{n} \int_0^t \int_{\epsilon \Gamma} \left\vert \ueps(s,z + \epsilon {\bf k} + \epsilon {\Bell}) - \ueps(s,z+\epsilon {\bf k} )\right\vert^2 d\sigma_z ds
\\
&\hspace{2em} + \frac{C}{\vert \epsilon \Gamma\vert \epsilon^2} \int_{\epsilon(Y+{\bf k}) \cap \Omega^h} \int_0^t \int_{\epsilon\Gamma} \left\vert \reps(s,z+\epsilon {\bf k} + \epsilon {\Bell}) - \reps(s,z+\epsilon {\bf k})\right\vert^2 d\sigma_z ds dx \bigg\}
\\
&= C \epsilon \int_0^t \int_{\geps^{\Bell}}  \vert \ueps(s, z + \epsilon {\Bell} ) - \ueps(s,z) \vert^2 d\sigma_z ds
\\
& \hspace{5em} + C\int_0^t \int_{\Omega^h }   \epsilon^{-2} \vert \reps(s,z+\epsilon {\Bell}) - \reps(s,z) \vert^2 dz ds . 
\end{align*}
Therefore, 
\begin{align*}
A_{\epsilon}^2 &\le C \int_0^t \int_{\oe^{\Bell}}  \vert \ueps(s, z + \epsilon {\Bell} ) - \ueps(s,z) \vert^2  + \epsilon^2\vert \nabla \ueps (s, z + \epsilon {\Bell} ) - \nabla \ueps (s,z)\vert^2 dz ds 
\\
&\hspace{5em}+ \int_0^t \int_{\Omega^h} \epsilon^{-2} \vert \reps(s,z+\epsilon {\Bell}) - \reps(s,z) \vert^2 dz ds 
\\
&\le C \int_0^t \int_{\Omega^{\vert \epsilon {\Bell}\vert}} \int_0^1 \vert \nabla \ueps(s,z + \lambda \epsilon {\Bell} )\vert^2 \cdot \vert \epsilon {\Bell} \vert^2 d\lambda dz ds \\
& \qquad + C\epsilon^2 + C \epsilon^{-2} \Vert \reps(\cdot,\cdot + \epsilon {\Bell} ) - \reps\Vert^2_{L^2((0,t)\times \Omega^h)}. 
\end{align*}
We can now use the \textit{a priori} estimate for $\nabla \ueps$, as proved in  Lemma \ref{AprioriEstimatesSummary}. Although these are obtained for the perforated domain $\oe$, they remain valid for the extension of $\ueps$ to $\Omega$, see $\eqref{AprioriExtension}$. From the above, we obtain 
\begin{align*}
A_{\epsilon}^2 &\le C \left( \Vert \nabla \ueps \Vert_{L^2((0,t)\times \Omega)}^2 \vert \epsilon {\Bell}\vert^2  + C\epsilon^2 + \epsilon^{-2} \Vert \reps(\cdot,\cdot + \epsilon {\Bell} ) - \reps\Vert^2_{L^2((0,t)\times \Omega^h)}\right)
\\
&\le C \left( \vert \epsilon \Bell \vert^2 + C\epsilon^2 + \epsilon^{-2} \Vert \reps(\cdot,\cdot + \epsilon \Bell ) - \reps\Vert^2_{L^2((0,t)\times \Omega^h)}\right). 
\end{align*}
The Gronwall-inequality gives 
\begin{align*}
\epsilon^{-2} \Vert \reps(t , x + \epsilon {\Bell} ) - \reps(t,x) \Vert^2_{L^{\infty}((0,T), L^2(\Omega^h))} \le C  \big(\vert \epsilon {\Bell} \vert^2 + \epsilon^2\big).
\end{align*}
With ${\Bell} = {\bf j} +  \left[\dfrac{\xi}{\epsilon}\right]$ and $\eqref{AuxiliaryInequalityShiftsDisplacement}$ we obtain
\begin{align}\label{InequalityShiftsSpaceReps}
\epsilon^{-1} \Vert \reps(t,x + \xi) - \reps (t,x) \Vert_{L^{\infty}((0,T),L^2(\Omega^h))} \le C \left(\vert \xi \vert + \epsilon \right).
\end{align}
Hence, for arbitrary $\rho \gr 0$ there exists $\epsilon_0, \delta_0 \gr 0$, such that, for all $\xi \in \R^n$ with $\vert \xi \vert \kl \delta_0$ it holds that
\begin{align}\label{AuxiliaryInequalityKolmogorov}
\sup_{\epsilon \le \epsilon_0} \epsilon^{-1} \Vert \reps(t,x+\xi) - \reps(t,x) \Vert_{L^{\infty}((0,T),L^2(\Omega^h))} \le \rho.
\end{align}
Since the sequence $\epsilon$ is countable there are only finitely many $\epsilon$ (denoted by $\epsilon_1, \ldots, \epsilon_N$) such that $\epsilon_i \gr \epsilon_0$ for $i=1,\ldots, N$. Due to the Kolmogorov-compactness theorem there exists $\delta_i\gr 0$ such that for all $\vert \xi \vert \kl \delta_0$
\begin{align*}
\epsilon^{-1} \Vert R_{\epsilon_i}(t,x+\xi) - R_{\epsilon_i}(t,x) \Vert_{L^{\infty}((0,T),L^2(\Omega^h))} \le \rho.
\end{align*}
Hence, $\eqref{AuxiliaryInequalityKolmogorov}$ is also valid for all $\vert \xi \vert \kl \delta:=\min\{\delta_0,\ldots, \delta_N\}$ and if we take the supremum over all $\epsilon$.
This gives the strong convergence of $\epsilon^{-1}\reps $ to $R_0$ in $L^2((0,T)\times \Omega)$. 

It remains to establish the higher regularity of $R_0$ with respect to $x$. This is an easy consequence of $\eqref{InequalityShiftsSpaceReps}$.
In fact, using the strong convergence (up to a subsequence) of $\reps$ in $L^p((0,T)\times \Omega)$ for $p \in [1,\infty)$ showed above, we obtain with $\vert \xi \vert \kl \frac{h}{2}$  for $\epsilon \to 0$ in $\eqref{InequalityShiftsSpaceReps}$
\begin{align*}
\left\Vert \frac{R_0(t,x + \xi) - R_0 }{\vert \xi \vert } \right\Vert_{L^p((0,T),L^2(\Omega^h))} \le C,
\end{align*}
with a constant $C \gr 0$ which can be chosen independently of $p$.  This implies $R_0 \in L^p((0,T),H^1(\Omega))$ with $L^p$-norm bounded uniformly with respect to $p$ and the proposition is proved.
\end{proof}

\begin{remark}\label{RemarkKonvergenzReps}\mbox{}
\begin{enumerate}
[label = (\roman*)]
\item The above proof shows that we also have the pointwise convergence 
\begin{align*}
\epsilon^{-1} \reps(t,\cdot) \rightarrow R_0(t,\cdot) \quad \mbox{ in } L^p(\Omega)
\end{align*}
for almost every $t \in (0,T)$ and $1 \le p \kl \infty$.
\item In the same way as we proved the regularity of $R_0$, we obtain the regularity of the initial condition $R^0 \in H^1(\Omega)$ from the Assumption \ref{AssumptionInitialValueReps}.
\end{enumerate}

\end{remark}

\begin{corollary}\label{StrongConvergenceRepsBoundary}
For all $p\in [1,\infty)$ it holds up to a subsequence that
\begin{align*}
\epsilon^{-1} \reps \rightarrow R_0 \quad \mbox{ strongly in the two-scale sense on } \geps \mbox{ in } L^p.
\end{align*}
\end{corollary}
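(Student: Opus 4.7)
The plan is to leverage the bulk strong convergence from Proposition \ref{StrongConvergenceReps} together with the structural fact that $\reps(t,\cdot)$ is constant on every micro cell $\epsilon(Y+{\bf k})$ while the limit $R_0$ is independent of the fast variable $y$. The weak two-scale convergence on $\geps$ is already given by Corollary \ref{WeakTSCompactnessResults}(ii), so the task is to establish the norm-convergence complement that upgrades weak two-scale to strong two-scale; equivalently, to show
\begin{align*}
\epsilon \int_0^T \int_{\geps} \left| \epsilon^{-1}\reps(t,x) - R_0(t,x) \right|^2 d\sigma dt \longrightarrow 0.
\end{align*}
Since $\|\epsilon^{-1}\reps\|_{L^{\infty}} + \|R_0\|_{L^{\infty}} \le C$, once the $L^2$ case is settled, the statement for general $p \in [1,\infty)$ follows by interpolation (or dominated convergence).

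First I would introduce the piecewise-constant cell-average $\bar R_0^{\epsilon}$ of $R_0$ on each cell $\epsilon(Y+{\bf k})$ and split via the triangle inequality:
\begin{align*}
\epsilon \int_{\geps} |\epsilon^{-1}\reps - R_0|^2 d\sigma \le 2\epsilon \int_{\geps} |\epsilon^{-1}\reps - \bar R_0^{\epsilon}|^2 d\sigma + 2\epsilon \int_{\geps} |\bar R_0^{\epsilon} - R_0|^2 d\sigma.
\end{align*}
Both integrands in the first term are constant on each cell; using $|\epsilon(\Gamma + {\bf k})| = \epsilon^{n-1}|\Gamma|$ while $|\epsilon(Y+{\bf k})| = \epsilon^n$, the boundary sum collapses to a volume integral,
\begin{align*}
\epsilon \int_{\geps} |\epsilon^{-1}\reps - \bar R_0^{\epsilon}|^2 d\sigma = |\Gamma|\, \|\epsilon^{-1}\reps - \bar R_0^{\epsilon}\|_{L^2(\Omega)}^2,
\end{align*}
which vanishes after integration in $t$ by Proposition \ref{StrongConvergenceReps} combined with the elementary $L^2$-convergence $\bar R_0^{\epsilon} \to R_0$.

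For the second term, I would apply the scaled trace inequality $\eqref{ScaledTraceInequality}$ cell by cell to the zero-cell-mean function $R_0 - \bar R_0^{\epsilon}$, together with a Poincaré inequality on each cell costing a factor $\epsilon$; since Proposition \ref{StrongConvergenceReps} provides $R_0 \in L^{\infty}((0,T),H^1(\Omega))$, this yields
\begin{align*}
\epsilon \int_{\geps} |\bar R_0^{\epsilon} - R_0|^2 d\sigma \le C\left(\|\bar R_0^{\epsilon} - R_0\|_{L^2(\Omega)}^2 + \epsilon^2 \|\nabla R_0\|_{L^2(\Omega)}^2\right) \longrightarrow 0.
\end{align*}
The only mildly delicate point, and the main technical obstacle in the argument, is that $R_0$ is only $H^1$ in space rather than smooth, so one cannot rely on a Taylor expansion to control the oscillation of $R_0$ within each $\epsilon$-cell on its boundary; the Poincaré-trace combination circumvents this by exchanging surface oscillation for a volume gradient at the price of one power of $\epsilon$. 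Integration in time and interpolation with the uniform $L^{\infty}$ bound then deliver the strong two-scale convergence on $\geps$ in $L^p$ for every $p \in [1,\infty)$.
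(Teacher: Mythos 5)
Your argument is correct. The paper proves the same statement via the unfolding operator: it observes that $\teps(\epsilon^{-1}\reps)$ is constant in $y$ (because $\reps$ is constant on each micro cell) and $R_0$ is trivially $y$-independent, so the trace inequality on the reference cell $Y^{\ast}$ bounds $\|\teps(\epsilon^{-1}\reps)-R_0\|_{L^p(\Omega\times\Gamma)}$ by $\|\teps(\epsilon^{-1}\reps)-R_0\|_{L^p(\Omega\times Y)}$ with the $\nabla_y$-term vanishing identically, and the latter norm tends to zero by Proposition~\ref{StrongConvergenceReps} and Lemma~\ref{LemmaAequivalenzTSKonvergenzUnfolding}. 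Your proof does the same job by hand: the cell-average $\bar R_0^{\epsilon}$ is the explicit avatar of $\teps(R_0)$, the surface-to-volume identity for piecewise-constant functions mirrors the boundary-unfolding isometry, and your trace-plus-Poincar\'e estimate for $R_0-\bar R_0^{\epsilon}$ does the work that the paper delegates to the standard property $\teps(R_0)\to R_0$ in $L^p$. The main difference is in the regularity exploited: the paper's argument works for any $R_0\in L^p$, whereas you lean on $R_0\in L^{\infty}((0,T),H^1(\Omega))$ from Proposition~\ref{StrongConvergenceReps} to control the surface oscillation of $R_0$; since that regularity is indeed available, your proof is sound, just slightly less economical. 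Both are genuinely valid; the unfolding formalism buys a cleaner presentation and weaker regularity requirement, while your direct computation is more elementary and avoids invoking the unfolding lemma at all.
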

\begin{proof}
We use the equivalent characterization of the strong two-scale convergence via the strong convergence of the unfolded sequence $\teps(\epsilon^{-1} \reps)$ in $L^p((0,T)\times \Omega \times \Gamma)$, see Lemma \ref{LemmaAequivalenzTSKonvergenzUnfolding} in the Appendix \ref{SectionTwoScaleConvergence}. Since $\reps$ is constant on every micro cell, $\teps(\reps)$ is constant with respect to $y \in Y$ and therefore $\nabla_y \teps (\reps) = 0$. Hence, we have with the trace inequality 
\begin{align*}
\Vert &\teps(\epsilon^{-1} \reps) - R_0 \Vert_{L^p((0,T)\times \Omega \times \Gamma)} 
\\
&\le C \bigg( \Vert \teps(\epsilon^{-1} \reps) - R_0 \Vert_{L^p((0,T)\times \Omega \times Y^{\ast})} + \Vert \underbrace{ \nabla_y (\teps(\epsilon^{-1}\reps) - R_0)}_{=0}\Vert_{L^p((0,T)\times \Omega \times Y^{\ast})} \bigg)
\\
&\le C \Vert \teps(\epsilon^{-1} \reps) - R_0 \Vert_{L^p((0,T)\times \Omega \times Y)} \overset{\epsilon\to 0}{\longrightarrow} 0,
\end{align*}
where the convergence follows from Proposition \ref{StrongConvergenceReps} and Lemma \ref{LemmaAequivalenzTSKonvergenzUnfolding}.
\end{proof}
Let us define
\begin{align*}
J_0(t,x,y) &:= \det\big(\nabla_y S_0(t,x,y)\big),
\\
D_0(t,x,y) &:= J_0(t,x,y) \nabla_y S_0(t,x,y)^{-1} D (x) \nabla_y S_0(t,x,y)^{-T}.
\end{align*}
For the definition of $S_0$ see Corollary \ref{WeakTSCompactnessResults}.
We emphasize that $0 \kl c_0 \le J_0(t,x,y)$ for a constant $c_0 \gr 0$ and almost every $(t,x,y) \in (0,T) \times \Omega \times Y^{\ast}$, and therefore $\nabla_y S(t,x,y)^{-1} $ exists almost everywhere. From the strong convergence of $\reps$, proved in Proposition \ref{StrongConvergenceReps}, we immediately obtain: 
\begin{corollary}\label{StrongTSConvergenceCoefficients}
For $1 \in [1,\infty)$ it holds up to a subsequence that
\begin{align*}
\nabla \seps &\rightarrow \nabla_y S_0 &\mbox{strongly in the two-scale sense in }& L^p,
\\
\jeps &\rightarrow J_0 &\mbox{strongly in the two-scale sense in }& L^p,
\\
\deps &\rightarrow D_0 &\mbox{strongly in the two-scale sense in }& L^p.
\end{align*}
Additionally, the strong two-scale convergence of $\nabla \seps$ and $\jeps$ is also valid on $\geps$ in $L^p$.
\end{corollary}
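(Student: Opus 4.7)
\medskip

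\textbf{Proof plan.} The whole argument reduces to the fact that $\nabla \seps$, $\jeps$ and $\deps$ are pointwise Lipschitz functions of the quantity $\epsilon^{-1}\reps$, which already converges strongly by Proposition \ref{StrongConvergenceReps}. I would use the equivalent characterization of strong two-scale convergence through the unfolding operator (Lemma \ref{LemmaAequivalenzTSKonvergenzUnfolding}), so all convergence statements become strong convergence in $L^p((0,T)\times\Omega\times Y^{\ast})$ for the unfolded sequences.

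First I would handle $\nabla \seps$. Since $\chi_0, \nu_0$ are $Y$-periodic and smooth, and since $\reps$ is constant on each micro cell $\epsilon(Y+{\bf k})$, the unfolding of $\nabla \seps$ is exactly
\[
\teps(\nabla \seps)(t,x,y) = I_n + \epsilon^{-1}\bigl(\teps(\reps)(t,x,y) - \epsilon \oR\bigr)\,\nabla_y(\chi_0\nu_0)(y).
\]
By Proposition \ref{StrongConvergenceReps} (and the standard fact that strong $L^p$ convergence is equivalent to strong $L^p$ convergence of the unfolded sequence), $\epsilon^{-1}\teps(\reps) \to R_0$ strongly in $L^p((0,T)\times\Omega\times Y^{\ast})$ for every $p\in[1,\infty)$. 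Multiplying by the bounded factor $\nabla_y(\chi_0\nu_0)(y)$ and adding $I_n$ gives strong convergence of $\teps(\nabla \seps)$ to $\nabla_y S_0$ in $L^p$, which is exactly the strong two-scale convergence of $\nabla \seps$.

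Next I would pass to $\jeps$ and $\deps$ by composition. The determinant is locally Lipschitz, and the uniform bound $\Vert \nabla \seps\Vert_{L^\infty}\le C$ from Lemma \ref{AprioriEstimatesSummary} allows me to conclude $\jeps \to J_0$ strongly in the two-scale sense. For $\deps$, I would write
\[
\deps = \jeps\,\feps^{-1}\,D(\seps)\,\fieps^{T}.
\]
Here $\seps = x + (\reps-\epsilon\oR)(\chi_0\nu_0)(x/\epsilon)$ converges uniformly to $x$ because $\reps = O(\epsilon)$, so $D(\seps)\to D(x)$ uniformly by continuity of $D$ (Assumption \ref{AssumptionDiffusion}). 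The map $A\mapsto A^{-1}$ is locally Lipschitz on the set of matrices whose determinant is bounded away from zero, and the lower bound $J_\epsilon \ge c_0 > 0$ from Lemma \ref{AprioriEstimatesSummary} together with the analogous lower bound $J_0\ge c_0$ guarantees we stay in that set. Combining these observations, $\fieps$ converges strongly in the two-scale sense to $(\nabla_y S_0)^{-1}$, and after multiplying the (bounded) factors we obtain $\deps \to D_0$ strongly in the two-scale sense in $L^p$.

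Finally, for the boundary statement, I would repeat the argument verbatim on $\geps$, replacing Proposition \ref{StrongConvergenceReps} by Corollary \ref{StrongConvergenceRepsBoundary}, which provides strong two-scale convergence of $\epsilon^{-1}\reps$ on $\geps$. The unfolding identity for $\nabla \seps$ and $\jeps$ at points of $\geps$ is the same as in the bulk because $\reps$ and the periodic factors $\chi_0,\nu_0$ are defined on all of $\Omega$, and the same Lipschitz/composition arguments go through. The only conceptual obstacle is making sure the inversion $\feps\mapsto\fieps$ is continuous in the relevant topology; this is where the uniform lower bound on $\jeps$ established in Lemma \ref{AprioriEstimatesSummary} is essential, but once it is invoked the rest is routine.
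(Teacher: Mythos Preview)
Your proposal is correct and follows essentially the same route as the paper: both arguments pass through the unfolding characterization of strong two-scale convergence, reduce $\nabla\seps\to\nabla_y S_0$ to the strong convergence of $\epsilon^{-1}\teps(\reps)\to R_0$ established in Proposition~\ref{StrongConvergenceReps}, then lift this to $\jeps$ via the local Lipschitz continuity of the determinant on bounded sets, to $\fieps$ via the uniform lower bound on $\jeps$, and finally to $\deps$ by combining the factors; the boundary case is handled identically from Corollary~\ref{StrongConvergenceRepsBoundary}. Your treatment of the factor $D(\seps)\to D(x)$ is in fact slightly more explicit than the paper's, which simply invokes the formula \eqref{DefVepsDeps} at that point.
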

\begin{proof}
Again, we use the unfolding operator. We have
\begin{align*}
\Vert \teps (\nabla \seps) - \nabla_y S_0 \Vert_{L^p((0,T)\times \Omega \times Y^{\ast})} &= \Vert (\teps(\epsilon^{-1}\reps) - R_0 ) \chi_0 \nu_0 \Vert_{L^p((0,T)\times \Omega \times Y^{\ast})}
\\
&\le C\Vert (\teps(\epsilon^{-1}\reps) - R_0 )   \Vert_{L^p((0,T)\times \Omega \times Y^{\ast})}
\end{align*}
The right-hand side converges to zero for $\epsilon \to 0$ because of Proposition \ref{StrongConvergenceReps}. For $\jeps$ we use the essential boundedness of $\teps(\jeps)$ and $J_0$ on $(0,T)\times \Omega \times Y^{\ast}$  uniformly with respect to $\epsilon$, to obtain with the local Lipschitz continuity of the determinant (using $\teps(\jeps) = \det (\teps(\nabla \seps))$)
\begin{align*}
\Vert \teps (\jeps) - J_0 \Vert_{L^p((0,T)\times \Omega \times Y^{\ast})} &\le C \Vert \teps(\nabla \seps) - \nabla_y S_0\Vert_{L^p((0,T)\times \Omega \times Y^{\ast})}\overset{\epsilon \to 0}{\longrightarrow} 0.
\end{align*}
To show the convergence of $D_{\epsilon}$, we first notice that due to $\eqref{BoundInverseMatrix}$ the inverse gradients $\teps(\nabla \seps^{-1})$ and $\nabla_y S_0^{-1}$ are essential bounded on $(0,T)\times \Omega \times Y^{\ast})$ uniformly with respect to $\epsilon$. This gives 
\begin{align*}
\Vert \teps(\nabla \seps^{-1}) - \nabla_y S_0^{-1}\Vert_{L^p((0,T)\times \Omega \times Y^{\ast})} &\le C \Vert \teps (\nabla \seps) - \nabla_y S_0 \Vert_{L^p((0,T)\times \Omega \times Y^{\ast})}
\\
&\overset{\epsilon\to 0}{\longrightarrow} 0, 
\end{align*}
where we have used the straightforward equality  
\begin{align*}
A^{-1} - B^{-1} = A^{-1} (B-A) B^{-1},
\end{align*}
valid for any invertible matrices $A,B \in \R^{n\times n}$. 
Using now \eqref{DefVepsDeps}, the convergence above yields the strong two-scale convergence of $D_{\epsilon}$. 

The convergence results for $\nabla \seps $ and $\jeps$ on $\geps$ follow by similar arguments and using Corollary \ref{StrongConvergenceRepsBoundary}.
\end{proof}

\subsubsection{Strong convergence of $\ueps$}
\label{SubsectionStrongConvergenceUeps}

Next, we prove the strong convergence of $\ueps$ in the two-scale sense to the limit function $u_0$. Since we have no uniform bound of $\partial_t \ueps$ with respect to $\epsilon$, standard methods (see \cite{MeirmanovZimin}) fail. Therefore, we first introduce a regularized auxiliary problem:
We consider for $t \in (0,T)$ and $\phi_0 \in C^{\infty}([0,T]\times \overline{\Omega})$ the problem
\begin{align}
\begin{aligned}
\label{AuxiliaryProblem}
- \Delta \weps + \weps &= \jeps(\ueps - \phi_0) &\mbox{ in }& \oe,
\\
-  \nabla \weps \cdot \nu &= 0 &\mbox{ on }& \partial \oe
\end{aligned}
\end{align}
Obviously, there exists a unique weak solution $\weps \in L^2((0,T),H^1(\oe))$. We have the following additional regularity with respect to time and uniform \textit{a priori} estimates with respect to $\epsilon$.

\begin{lemma}\label{AprioriEstimateAuxiliaryEquation}
It holds that $\weps \in  H^1((0,T),H^1(\oe))$ with
\begin{align}\label{AprioriEstimateAuxiliaryEquationInequality}
\Vert \partial_t \weps \Vert_{L^2((0,T),H^1(\oe))} + \Vert \weps \Vert_{L^{\infty}((0,T),H^1(\oe))} \le C,
\end{align}
with a constant $C\gr 0 $ independent of $\epsilon$.
\end{lemma}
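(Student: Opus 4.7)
The plan is to prove the two bounds separately, exploiting the structure of the auxiliary equation \eqref{AuxiliaryProblem} as a coercive elliptic problem on $\oe$ where the right-hand side has better time regularity than one might fear.

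First I would bound $\weps$ in $L^\infty((0,T),H^1(\oe))$. Testing the weak form of \eqref{AuxiliaryProblem} with $\weps$ itself gives
\begin{equation*}
\|\nabla \weps\|_{L^2(\oe)}^2 + \|\weps\|_{L^2(\oe)}^2 = \int_{\oe} \jeps(\ueps - \phi_0)\weps\, dx.
\end{equation*}
Using the uniform bound $\|\jeps\|_{L^\infty((0,T)\times \oe)} \le C$ from Lemma \ref{AprioriEstimatesSummary}, the bound $\|\ueps\|_{L^\infty((0,T),L^2(\oe))}\le C$, and the smoothness of $\phi_0$, Cauchy--Schwarz and Young's inequality yield the desired $L^\infty((0,T),H^1(\oe))$ estimate pointwise a.e.\ in $t$, uniformly in $\epsilon$.

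For the time derivative the key observation is that, although $\partial_t \ueps$ is only of order $\epsilon^{-1}$, the product $\partial_t(\jeps \ueps)$ is uniformly bounded in $L^2((0,T),H^1(\oe)')$ by Lemma \ref{AprioriEstimatesSummary}. I would rewrite the right-hand side of \eqref{AuxiliaryProblem} as $\jeps \ueps - \jeps \phi_0$ and work with its distributional time derivative
\begin{equation*}
\partial_t\bigl(\jeps(\ueps - \phi_0)\bigr) = \partial_t(\jeps \ueps) - (\partial_t \jeps)\phi_0 - \jeps \partial_t \phi_0,
\end{equation*}
which by Lemma \ref{AprioriEstimatesSummary} lies in $L^2((0,T),H^1(\oe)')$ with $\epsilon$-independent norm. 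To justify that $\partial_t \weps$ exists, I would use difference quotients in time: the operator $-\Delta + \mathrm{id}$ on $H^1(\oe)$ with homogeneous Neumann boundary condition has a continuous inverse into $H^1(\oe)$ whose operator norm from $H^1(\oe)'$ to $H^1(\oe)$ is uniform in $\epsilon$ (by Lax-Milgram with an $\epsilon$-independent coercivity constant $1$). Consequently, writing $\partial_t^h \weps$ for the backward difference quotient and subtracting two copies of \eqref{AuxiliaryProblem} at times $t$ and $t-h$, one finds that $\partial_t^h \weps$ is the solution of the same elliptic problem with right-hand side $\partial_t^h(\jeps(\ueps - \phi_0))$. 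Testing with $\partial_t^h \weps$ itself gives
\begin{equation*}
\|\partial_t^h \weps\|_{H^1(\oe)}^2 \le \|\partial_t^h(\jeps(\ueps - \phi_0))\|_{H^1(\oe)'}\|\partial_t^h \weps\|_{H^1(\oe)},
\end{equation*}
and hence $\|\partial_t^h \weps\|_{H^1(\oe)} \le \|\partial_t^h(\jeps(\ueps - \phi_0))\|_{H^1(\oe)'}$. Squaring and integrating in $t$, then passing to the limit $h\to 0$ (using weak-lower semicontinuity of the norm and the standard fact that difference quotients of functions whose derivative lies in $L^2$ converge in $L^2$), yields the claimed estimate on $\|\partial_t \weps\|_{L^2((0,T),H^1(\oe))}$ with the bound given by $\|\partial_t(\jeps \ueps)\|_{L^2((0,T),H^1(\oe)')} + \|\partial_t \jeps\|_{L^\infty}\|\phi_0\|_{L^2} + \|\jeps\|_{L^\infty}\|\partial_t \phi_0\|_{L^2}$, all of which are $\epsilon$-uniform.

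The main obstacle is the rigorous justification of the time-differentiation step, since $\partial_t(\jeps\ueps)$ lives only in $H^1(\oe)'$ and we must nevertheless test the time-differentiated equation with $\partial_t \weps$ (itself a priori just an element of $H^1(\oe)$). The difference-quotient argument outlined above circumvents this, but one must verify that the dual pairing $\langle \partial_t^h(\jeps \ueps),\partial_t^h \weps\rangle$ is well-defined and that the $\epsilon$-uniform coercivity constant of $-\Delta+\mathrm{id}$ with Neumann data really is independent of the geometry $\oe$, which is immediate here since the bilinear form $\int_{\oe}(\nabla u\cdot\nabla v + uv)\,dx$ is trivially coercive on $H^1(\oe)$ with constant $1$, independently of $\epsilon$.
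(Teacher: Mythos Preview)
Your proposal is correct and follows essentially the same approach as the paper: test \eqref{AuxiliaryProblem} with $\weps$ for the $L^\infty((0,T),H^1(\oe))$ bound, then apply time difference quotients, test with $\partial_t^h \weps$, and exploit the $\epsilon$-uniform bound on $\partial_t(\jeps(\ueps-\phi_0))$ in $L^2((0,T),H^1(\oe)')$ coming from Lemma \ref{AprioriEstimatesSummary}. The only cosmetic difference is that you extract the pointwise-in-$t$ estimate $\|\partial_t^h \weps\|_{H^1(\oe)} \le \|\partial_t^h(\jeps(\ueps-\phi_0))\|_{H^1(\oe)'}$ before integrating, whereas the paper integrates first and then absorbs via Young's inequality; your version is slightly cleaner but the content is identical.
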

\begin{proof}
Testing the weak formulation of $\eqref{AuxiliaryProblem}$ with $\weps$ we obtain for a constant $c_0 \gr 0$ almost everywhere in $(0,T)$ with Lemma \ref{AprioriEstimatesSummary}
\begin{align*}
\Vert \weps \Vert_{L^2(\oe)}^2 + \Vert \nabla \weps \Vert_{L^2(\oe)}^2 &=  \int_{\oe} \jeps (\ueps -  \phi_0) \weps dx
\\
&\le \Vert \jeps \Vert_{L^{\infty}(  \oe)} \Vert \ueps - \phi_0 \Vert_{  L^2(\oe)} \Vert \weps \Vert_{L^2(\oe)} .
\\
&\le C \Vert \weps\Vert_{L^2(\oe)}
\\
&\le C + \frac12 \Vert \weps \Vert_{L^2(\oe)}^2.
\end{align*}
This implies 
\begin{align*}
\Vert \weps \Vert_{L^{\infty}((0,T),H^1(\oe))} \le C,
\end{align*}
where the constant $C$ is independent of $\epsilon$, but depends on the $L^{\infty}((0,T),L^2( \Omega))$-norm of $\phi_0$.

Let us check the estimate for the time-derivative. We define for $0 \kl h \ll T$ the difference quotient with respect to time for a function $\peps : (0,T)\times \oe \rightarrow \R$ by
\begin{align*}
\dth \peps(t,x) := \frac{\peps (t+h,x) - \peps(t,x)}{h} \quad \mbox{ for } (t,x)\in (0,T-h) \times \oe.
\end{align*}
Applying $\dth$ to the equation $\eqref{AuxiliaryProblem}$, we obtain
\begin{align*}
-\Delta \dth \weps + \dth \weps &= \dth[\jeps (\ueps - \phi_0)]   &\mbox{ in }& \Omega,
\\
-   \nabla \dth \weps \cdot \nu &=0 &\mbox{ on }& \partial \oe.
\end{align*}
Testing the weak formulation of the equation above with $\dth \weps$ and integrating with respect to time, we obtain
\begin{align*}
\int_0^{T-h}\int_{\oe}  \nabla \dth \weps \cdot \nabla &\dth \weps + \vert \dth \weps \vert^2 dxdt = \int_0^{T-h}\int_{\oe} \dth [\jeps (\ueps - \phi_0)] \dth \weps dxdt
\\
&\le \Vert \dth (\jeps(\ueps - \phi_0))\Vert_{L^2((0,T-h),H^1(\oe)')} \Vert \dth \weps \Vert_{L^2((0,T-h),H^1(\oe))} 
\\
&\le C(\theta) + \theta \Vert \dth \weps \Vert_{L^2((0,T-h),H^1(\oe))}^2
\end{align*}
for all $\theta \gr 0$ and a constant $C(\theta)\gr 0 $ depending on $\theta$, where in the last inequality we used the \textit{a priori} bounds for $\partial_t (\jeps \ueps)$, $\jeps$, and $\partial_t \jeps$  from Lemma \ref{AprioriEstimatesSummary}. The constant $C(\theta)$ depends on the $L^2((0,T)\times \Omega)$-norm of $\partial_t\phi_0$ and $\phi_0$. For $\theta$ small enough the last term can be absorbed from the left-hand side and we obtain
\begin{align*}
\Vert \dth \weps \Vert_{L^2((0,T-h),H^1(\oe))} \le C.
\end{align*}
This gives the desired result.
\end{proof}

\begin{remark}\label{RemarkWeps}\mbox{}
\begin{enumerate}
[label = (\roman*)]
\item By a density argument we want to choose $\phi_0 = u_0$. However, in the proof of the Lemma above we have to work with smooth $\phi_0$, since we need the time derivative $\partial_t (\jeps \phi_0)$ and it is not clear whether the time-derivative $\partial_t (\jeps u_0)$ exists and is bounded (not even clear $\partial_t u_0 \in L^2((0,T),H^1(\Omega)')$!).
\item The inequality $\eqref{AprioriEstimateAuxiliaryEquationInequality}$ depends on the norm
\begin{align*}
\Vert \phi_0 \Vert_{L^2((0,T),H^1(\Omega))} + \Vert \partial_t \phi_0 \Vert_{L^2((0,T)\times \Omega)}.
\end{align*}
In the proof of the following Proposition \ref{StrongConvergenceUeps} we will choose a sequence $\phi_0 = \phi_k$ which converges only in $L^2((0,T),H^1(\Omega))$, \ie the norm of the time-derivative $\partial_t u_0$ is in general not bounded. We will see in the proof that this has no influence on the result.
\item\label{RemarkWepsSobolevEmbedding} Due to the Sobolev-embedding and since $\oe$ is connected, we also have 
\begin{align*}
\Vert \weps \Vert_{L^2((0,T),L^q(\oe))} \le C
\end{align*}
for $q = \frac{2n}{n-2}$ if $n\neq 2$ and $q \in [1,\infty)$ if $n=2$.
\end{enumerate}

\end{remark}

\begin{corollary}\label{ConvergenceAuxiliarySequence}
There exists $w_0 \in L^2((0,T),H^1(\Omega))$  and an extension $\tilde{w}_{\epsilon} \in L^2((0,T),H^1(\Omega))$ of $\weps$, such that up to a subsequence it holds that
\begin{align*}
\tilde{w}_{\epsilon} &\rightharpoonup w_0 &\mbox{ weakly in }& L^2((0,T),H^1(\Omega)),
\\
\tilde{w}_{\epsilon} &\rightarrow w_0 &\mbox{ in }& L^2((0,T),L^2(\Omega)).
\end{align*}
Especially, we obtain the strong two-scale convergence of $\weps$ to $w_0$.
\end{corollary}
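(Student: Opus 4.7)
The plan is to exploit the uniform bounds from Lemma \ref{AprioriEstimateAuxiliaryEquation} together with a classical periodic extension operator to move the problem onto the fixed domain $\Omega$, and then apply Aubin--Lions.

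First, since $\oe$ is a periodically perforated domain with fixed geometry (the holes are the maximal balls $B_{\epsilon \oR}$, independent of time after the Hanzawa transformation), the extension result of Acerbi et al.\ \cite{Acerbi1992} (cf.\ also \cite{CioranescuSJPaulin}) provides a linear extension operator $P_\epsilon : H^1(\oe) \to H^1(\Omega)$ whose operator norm is bounded independently of $\epsilon$. Setting $\tweps(t,\cdot) := P_\epsilon w_\epsilon(t,\cdot)$, the linearity of $P_\epsilon$ together with Lemma \ref{AprioriEstimateAuxiliaryEquation} gives
\begin{align*}
\Vert \tweps \Vert_{L^{\infty}((0,T),H^1(\Omega))} + \Vert \partial_t \tweps \Vert_{L^2((0,T),H^1(\Omega))} \le C,
\end{align*}
with $C$ independent of $\epsilon$, since $P_\epsilon$ commutes with $\partial_t$.

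Second, from these $\epsilon$-uniform bounds, the Banach--Alaoglu theorem yields (up to a subsequence) some $w_0 \in L^2((0,T),H^1(\Omega))$ with
\begin{align*}
\tweps \rightharpoonup w_0 \quad \text{weakly in } L^2((0,T),H^1(\Omega)).
\end{align*}
Since the embedding $H^1(\Omega) \hookrightarrow L^2(\Omega)$ is compact and $\partial_t \tweps$ is uniformly bounded in $L^2((0,T),L^2(\Omega))$ (a fortiori in $L^2((0,T),H^1(\Omega)')$), the Aubin--Lions lemma yields the strong convergence $\tweps \to w_0$ in $L^2((0,T),L^2(\Omega))$, establishing the second claimed convergence.

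Finally, strong convergence of $\tweps$ (and in particular of $\weps$, since $\weps = \tweps|_{\oe}$) in $L^2((0,T)\times \Omega)$ to a limit $w_0$ that is independent of the microscopic variable $y$ implies strong two-scale convergence of $\weps$ to $w_0$; this is a standard consequence of the definition of two-scale convergence via the unfolding operator (see Lemma \ref{LemmaAequivalenzTSKonvergenzUnfolding} in the Appendix), since $\teps(\weps) \to w_0$ in $L^2((0,T)\times \Omega \times Y^*)$. No step presents a real obstacle: the only point requiring care is the uniform-in-$\epsilon$ extension, which, however, is directly available because after transformation the perforated geometry of $\oe$ is fixed and periodic.
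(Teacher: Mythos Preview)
Your proof is correct and follows essentially the same route as the paper, which merely cites \cite{Acerbi1992} for the uniform extension and \cite[Theorem 2.1]{MeirmanovZimin} or \cite[Lemma 10]{Gahn} for the strong convergence; you have simply unpacked these references into the natural extension-plus-Aubin--Lions argument. The only (harmless) redundancy is that you invoke the full $L^2((0,T),H^1(\Omega))$ bound on $\partial_t \tweps$ where any weaker dual-space bound would suffice for Aubin--Lions.
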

\begin{proof}
We refer to \cite{Acerbi1992} for the extension and for the strong convergence see \cite[Theorem 2.1]{MeirmanovZimin}  or \cite[Lemma 10]{Gahn}. 
\end{proof}
Obviously, $w_0 \in L^2((0,T),H^1(\Omega))$ solves the following macroscopic problem:
\begin{align}
\begin{aligned}
\label{AuxiliaryProblemLimitEquation}
- \nabla \cdot (A \nabla w_0) + w_0 &= \J_0 (u_0 - \phi_0) &\mbox{ in }& (0,T)\times \Omega,
\\
-A \nabla w_0 \cdot \nu &= 0 &\mbox{ on }& (0,T)\times \partial \Omega,
\end{aligned}
\end{align}
with 
\begin{align*}
\J_0(t,x) : = \int_{Y^{\ast}} J_0(t,x,y) dy \quad \mbox{ for almost every } (t,x) \in (0,T)\times \Omega,
\end{align*}
and $A \in \R^{n\times n}$ is defined by 
\begin{align*}
A_{ij} = \int_{Y^{\ast}} \big(\nabla \chi_i + e_i\big) \cdot \big(\nabla \chi_j + e_j \big) dy,
\end{align*}
and $\chi_i \in H^1_{\per}(Y^{\ast})/ \R$ are the solutions of the following cell problems:
\begin{align*}
- \nabla  \cdot \big(\nabla \chi_i + e_i\big) &=0 &\mbox{ in }& Y^{\ast},
\\
-\big(\nabla \chi_i + e_i\big) \cdot \nu &= 0 &\mbox{ on }& \Gamma,
\\
\chi_i \mbox{ is } Y\mbox{-periodic, } \int_{Y^{\ast}} \chi_i dy = 0.
\end{align*}

\begin{proposition}\label{StrongConvergenceUeps}
Up to a subsequence it holds that
\begin{align*}
\chi_{\oe} \ueps &\rightarrow \chi_{Y^{\ast}} u_0 &\mbox{ strongly in the two-scale sense in }& L^2,
\\
\ueps\vert_{\geps} &\rightarrow u_0 &\mbox{ strongly in the two-scale sense on }& \geps \mbox{ in } L^2.
\end{align*}
%
%
\end{proposition}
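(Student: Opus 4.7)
The plan is to prove strong two-scale convergence of $\ueps$ by establishing the norm-type identity
\begin{align*}
\lim_{\epsilon \to 0} \int_0^T\!\!\int_\oe \jeps |\ueps|^2\,dx\,dt = \int_0^T\!\!\int_\Omega \bar{J}_0 |u_0|^2\,dx\,dt.
\end{align*}
Combined with the weak two-scale convergence from Corollary~\ref{WeakTSCompactnessResults} and the uniform lower bound $\jeps \geq c_0 > 0$ from Lemma~\ref{AprioriEstimatesSummary}, this upgrades weak two-scale convergence to strong two-scale convergence (by the usual ``weak + norm convergence $\Rightarrow$ strong'' argument applied in a weighted $L^2$ setting).

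\textbf{Reduction via the auxiliary problem.} I first approximate $u_0$ by a sequence $\phi_k \in C^\infty([0,T]\times\overline{\Omega})$ with $\phi_k \to u_0$ in $L^2((0,T),H^1(\Omega))$, so that Lemma~\ref{AprioriEstimateAuxiliaryEquation} applies to the corresponding solution $\weps^k$ of \eqref{AuxiliaryProblem} with $\phi_0 = \phi_k$; the associated limit $w_0^k$ from Corollary~\ref{ConvergenceAuxiliarySequence} solves \eqref{AuxiliaryProblemLimitEquation}. Testing the weak form of \eqref{AuxiliaryProblem} with $\ueps$ (an admissible $H^1(\oe)$ test function) and integrating in time gives the identity
\begin{align*}
\int_0^T\!\!\int_\oe \jeps |\ueps|^2 = \int_0^T\!\!\int_\oe \jeps \ueps \phi_k + \int_0^T\!\!\int_\oe \nabla \weps^k \cdot \nabla \ueps + \int_0^T\!\!\int_\oe \weps^k \ueps.
\end{align*}
The first term converges to $\int \bar{J}_0 u_0 \phi_k$ by strong two-scale convergence of $\jeps$ (Corollary~\ref{StrongTSConvergenceCoefficients}) paired with weak two-scale convergence of $\ueps$. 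The third term converges to $|Y^{\ast}|\int w_0^k u_0$ by strong $L^2(\Omega)$-convergence of the extensions of $\weps^k$ paired with weak convergence of $\ueps$.

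\textbf{The gradient product.} The delicate contribution is $\int \nabla \weps^k \cdot \nabla \ueps$, where both factors converge only weakly in the two-scale sense. I will strengthen $\nabla \weps^k$ to strong two-scale convergence through an energy identity: testing \eqref{AuxiliaryProblem} with $\weps^k$ gives
\begin{align*}
\int_0^T\!\!\int_\oe |\nabla \weps^k|^2 + |\weps^k|^2 = \int_0^T\!\!\int_\oe \jeps(\ueps - \phi_k)\weps^k,
\end{align*}
whose right-hand side passes to the limit via the strong two-scale convergences already established, yielding exactly the value obtained by testing \eqref{AuxiliaryProblemLimitEquation} with $w_0^k$ (together with the cell-problem identity $\int_{Y^{\ast}}|\nabla w_0^k + \nabla_y w_1^k|^2\,dy = A \nabla w_0^k \cdot \nabla w_0^k$, where $w_1^k$ is the standard corrector). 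Equality of norms plus weak two-scale convergence then gives strong two-scale convergence of $\nabla \weps^k$ to $\nabla w_0^k + \nabla_y w_1^k$. Combining this with the weak two-scale convergence $\nabla \ueps \rightharpoonup \nabla u_0 + \nabla_y u_1$ (Corollary~\ref{WeakTSCompactnessResults}), and exploiting cell-problem orthogonality to kill the cross term involving $\nabla_y u_1$, yields $\int \nabla \weps^k \cdot \nabla \ueps \to \int A \nabla w_0^k \cdot \nabla u_0$. Substituting these three limits back into the identity and using \eqref{AuxiliaryProblemLimitEquation} tested with $u_0$ to rewrite $\int A \nabla w_0^k \cdot \nabla u_0$, the $k$-dependent contributions cancel algebraically, giving the desired norm identity.

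\textbf{Boundary trace and main obstacle.} The strong two-scale convergence on $\geps$ follows via the equivalent characterization through the unfolding operator (Lemma~\ref{LemmaAequivalenzTSKonvergenzUnfolding}): the bulk strong two-scale convergence together with the uniform $L^2((0,T),H^1(\Omega))$ bound \eqref{AprioriExtension} implies boundedness of $\teps(\ueps)$ in $L^2((0,T)\times\Omega, H^1(Y^{\ast}))$, and the continuous trace $H^1(Y^{\ast}) \hookrightarrow L^2(\Gamma)$ together with the $y$-independence of $u_0$ transfers the bulk convergence to the traces on $\Gamma$. The main technical obstacle is the strong two-scale convergence of $\nabla \weps^k$, which relies essentially on the time regularity $\partial_t \weps^k \in L^2((0,T),H^1(\oe))$ from Lemma~\ref{AprioriEstimateAuxiliaryEquation}. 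As noted in Remark~\ref{RemarkWeps}, this regularity requires smooth $\phi_0 = \phi_k$, because the bound depends on $\partial_t \phi_0$; the direct choice $\phi_0 = u_0$ fails, and it is essential that the $k$-dependent quantities ultimately cancel out so that the approximation $\phi_k \to u_0$ need only hold in $L^2((0,T),H^1(\Omega))$.
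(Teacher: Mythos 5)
Your route is genuinely different from the paper's, although both are built around the auxiliary problem \eqref{AuxiliaryProblem}. The paper tests \eqref{AuxiliaryProblem} with $\ueps - \phi_k$, so that the $\jeps$-weighted coercivity gives $c_0\Vert\ueps - \phi_k\Vert^2_{L^2(\oe)}\le\Vert\ueps-\phi_k\Vert_{H^1(\oe)}\Vert\weps^k\Vert_{H^1(\oe)}$; it then bounds $\limsup_{\epsilon}\Vert\weps^k\Vert_{L^2((0,T),H^1(\oe))}$ by $C\Vert u_0-\phi_k\Vert_{L^2}$ by writing $\jeps(\ueps-\phi_k)=\jeps(\ueps-u_0)+\jeps(u_0-\phi_k)$ and sending the first contribution to zero for fixed $k$, using only the strong $L^2$ convergence $\weps^k\to w_0^k$, a Sobolev embedding for $w_0^k$, and the weak two-scale convergence of $\ueps$. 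Crucially, the paper never invokes the homogenized equation \eqref{AuxiliaryProblemLimitEquation} nor any information about $\nabla\weps^k$ beyond weak compactness. You instead test with $\ueps$, aim for the norm identity $\int\jeps|\ueps|^2\to\int\bar{J}_0|u_0|^2$, and upgrade via the weighted argument with $\sqrt{\jeps}\ge\sqrt{c_0}$. That is a clean structural statement, but the price is that you must first prove strong two-scale convergence of $\nabla\weps^k$, which in your scheme comes from the energy identity together with \eqref{AuxiliaryProblemLimitEquation}.

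That dependence is where a genuine gap sits. As printed, the zero-order coefficient in \eqref{AuxiliaryProblemLimitEquation} is $1$, whereas passing to the two-scale limit in $\int_\oe\weps\phi\,dx$ produces the porosity-weighted term $\vert Y^{\ast}\vert\int_\Omega w_0\phi\,dx$, so the coefficient should be $\vert Y^{\ast}\vert$. Using the equation literally as stated, your energy-identity computation yields $\lim_\epsilon\Vert\nabla\weps^k\Vert^2_{L^2}=\int A\nabla w_0^k\cdot\nabla w_0^k+(1-\vert Y^{\ast}\vert)\Vert w_0^k\Vert^2_{L^2}$, so the claimed strong two-scale convergence of $\nabla\weps^k$ does not follow, and the same mismatch spoils the final algebraic cancellation of the $k$-dependent terms. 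The fix is easy — rederive the homogenized equation with the correct $\vert Y^{\ast}\vert$ factor, or better, bypass it by passing to the $\epsilon\to 0$ limit directly in the $\epsilon$-level weak form of \eqref{AuxiliaryProblem} tested with $\weps^k$ and with $u_0$ — but this step must be carried out explicitly for your strategy to close; the paper's route is insensitive to this precisely because it never uses the macroscopic equation. Your trace argument via unfolding and $H^1(Y^{\ast})\hookrightarrow L^2(\Gamma)$ is fine and essentially equivalent to the paper's use of the scaled trace inequality \eqref{ScaledTraceInequality}.
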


\begin{proof}
We choose a sequence $\phi_k \in C^{\infty}_0((0,T)\times \overline{\Omega})$, such that $\phi_k \rightarrow u_0$ in $L^2((0,T),H^1(\Omega))$ for $k\to \infty$.
We denote by $\weps^k$ the solutions of $\eqref{AuxiliaryProblem}$ for $\phi_0 = \phi_k$ and  $w_0^k$ is the solution of $\eqref{AuxiliaryProblemLimitEquation}$ for $\phi_0 = \phi_k$. 
Testing $\eqref{AuxiliaryProblem}$ with $\ueps - \phi_k$, we obtain with $0 \kl c_0 \le \jeps$
\begin{align*}
c_0 \Vert \ueps - \phi_k \Vert_{L^2( \oe)}^2 &\le \int_{\oe} \jeps (\ueps - \phi_k)^2 dx 
\\
&= \int_{\oe} \nabla (\ueps - \phi_k ) \cdot \nabla \weps^k dx +  \int_{\oe} (\ueps - \phi_k) \weps^k dx
\\
&\le \Vert \ueps - \phi_k \Vert_{H^1(\oe)} \Vert \weps^k\Vert_{H^1(\oe)}.
\end{align*}
Hence, we get
\begin{align}
\begin{aligned}
\label{AuxiliaryInequalityStrongConvergenceUeps}
\frac{c_0}{2} \Vert & \ueps - u_0 \Vert^2_{L^2((0,T)\times\oe)} \le c_0 \Vert \ueps - \phi_k\Vert_{L^2((0,T)\times\oe)}^2 + c_0\Vert u_0 - \phi_k\Vert^2_{L^2((0,T)\times\oe)}
\\
&\le \Vert \ueps - \phi_k \Vert_{L^2((0,T),H^1(\oe))} \Vert \weps^k\Vert_{L^2((0,T),H^1(\oe))} +  c_0\Vert u_0 - \phi_k \Vert^2_{L^2((0,T)\times\oe)}.
\end{aligned}
\end{align}
The second term on the right-hand side tends to zero for $k \to \infty$, due to the choice of $\phi_k$.
For the first term we  notice that due to the strong convergence of $\phi_k$ in $L^2((0,T),H^1(\Omega))$ and Lemma \ref{AprioriEstimatesSummary}, we have
\begin{align*}
 \Vert  \ueps -  \phi_k \Vert_{L^2((0,T),H^1(\Omega))} \le C
\end{align*}
for a constant $C\gr 0$ independent of $k$ and $\epsilon$. We have to estimate $\Vert \weps^k\Vert_{L^2((0,T) , H^1(\oe))}$. From $\eqref{AuxiliaryProblem}$ we obtain by testing with $\weps^k$ after integration with respect to time:
\begin{align*}
\Vert \weps^k &\Vert_{L^2((0,T),H^1(\oe))}^2 = \int_0^T \int_{\oe} \jeps (\ueps - \phi_k) \weps^k dx dt
\\
&= \int_0^T \jeps (\ueps - u_0)\weps^k dx dt + \int_0^T \int_{\oe} \jeps (u_0 - \phi_k) \weps^k dx dt
\\
&\le \int_0^T \int_{\oe} \jeps (\ueps - u_0) \weps^k dx dt 
\\
&\hspace{2em}+ \Vert \jeps\Vert_{L^{\infty}((0,T)\times \oe)} \Vert u_0 - \phi_k \Vert_{L^2((0,T)\times \oe)} \Vert \weps^k\Vert_{L^2((0,T)\times \oe)}
\\
&\le \int_0^T \int_{\oe} \jeps (\ueps - u_0) \weps^k dx dt + C \Vert u_0 - \phi_k\Vert_{L^2((0,T)\times \oe)}^2 +\frac12 \Vert \weps^k \Vert_{L^2((0,T)\times \oe)}^2.
\end{align*}
Hence, we obtain
\begin{align*}
\Vert \weps^k \Vert_{L^2((0,T),H^1(\oe))}^2 \le C \left(\int_0^T \int_{\oe} \jeps (\ueps - u_0) \weps^k dx dt + \Vert u_0 - \phi_k \Vert_{L^2((0,T)\times \oe)}^2 \right).
\end{align*}
We show that the first term on the right-hand side converges to zero for fixed $k$ and $\epsilon \to 0$. We have
\begin{align*}
\int_0^T \int_{\oe} \jeps (\ueps - u_0) \weps^k dx dt  =& \int_0^T \int_{\oe} \left(\jeps - J_0\left(x,\fxe\right)\right) (\ueps - u_0) w_0^k dx dt 
\\
&+ \int_0^T \int_{\oe} J_0\left(x,\fxe\right) (\ueps - u_0) w_0^k dx dt
\\
&+ \int_0^T \int_{\oe } \jeps (\ueps - u_0) (\weps^k - w_0^k) dx dt
\\
=&: A_{\epsilon}^1 + A_{\epsilon}^2 + A_{\epsilon}^3.
\end{align*}
For the third term $A_{\epsilon}^3$ we obtain with the \textit{a priori} estimates from Lemma \ref{AprioriEstimatesSummary} and the strong convergence of $\tweps^k$ from Corollary \ref{ConvergenceAuxiliarySequence}
\begin{align*}
\vert A_{\epsilon}^3 \vert \le \Vert\jeps \Vert_{L^{\infty}((0,T)\times \oe)} \Vert \ueps - u_0 \Vert_{L^2((0,T)\times \oe)} \Vert \weps^k - w_0^k\Vert_{L^2((0,T)\times \oe)} \overset{\epsilon\to 0}{\longrightarrow} 0.
\end{align*}
Since $J_0\left(x,\fxe\right) w_0^k$ is an admissible test-function in the two-scale sense, we obtain from the (weak) two-scale convergence of $\ueps $ from Corollary \ref{WeakTSCompactnessResults} that $A_{\epsilon}^2 \rightarrow 0$ for $\epsilon \to 0$. It remains to estimate $A_{\epsilon}^1$. From Remark \ref{RemarkWeps} we have $w_0^k \in L^2((0,T),L^q(\Omega))$ for $q \gr 2$. Hence, there exists $p \in (1,\infty)$ such that $\frac{1}{p} + \frac{1}{q} = \frac12$. From the H\"older-inequality we get with Lemma \ref{AprioriEstimatesSummary}
\begin{align*}
\vert A_{\epsilon}^1 \vert &\le C \left\Vert \jeps - J_0\left(x,\fxe\right) \right\Vert_{L^p((0,T)\times \oe)} \Vert \ueps - u_0 \Vert_{L^{\infty}((0,T),L^2(\oe))} \Vert w_0^k\Vert_{L^2((0,T),L^q(\oe))}
\\
&\le C  \left\Vert \jeps - J_0\left(x,\fxe\right) \right\Vert_{L^p((0,T)\times \oe)} \overset{\epsilon\to 0}{\longrightarrow} 0,
\end{align*}
where at the end we used the strong two-scale convergence of $\jeps $, see also Remark \ref{RemarkCharacterizationStrongTSConvergence} in the Appendix \ref{SectionTwoScaleConvergence}.
Altogether, we get: 
\begin{align*}
\limsup_{\epsilon\to 0} \Vert \weps^k \Vert_{L^2((0,T),H^1(\oe))} \le C \Vert u_0 - \phi_k\Vert_{L^2((0,T)\times \Omega)}.
\end{align*}
Altogether, we obtain from $\eqref{AuxiliaryInequalityStrongConvergenceUeps}$ 
\begin{align*}
\limsup_{\epsilon \to 0} \Vert \ueps - u_0\Vert_{L^2((0,T)\times \oe)} \le C \left(\Vert u_0 - \phi_k\Vert^2_{L^2((0,T)\times \Omega)} + \Vert u_0 - \phi_k\Vert_{L^2((0,T)\times \Omega)} \right).
\end{align*}
For $k\to \infty $ we get the first convergence in the statement. To prove the strong two-scale convergence on the surface $\geps$ we use the trace inequality $\eqref{ScaledTraceInequality}$ to obtain with Lemma \ref{AprioriEstimatesSummary}
\begin{align*}
\sqrt{\epsilon} \Vert \ueps - u_0 \Vert_{L^2((0,T)\times \geps)} &\le C \left( \Vert \ueps - u_0 \Vert_{L^2((0,T)\times \oe)} + \epsilon \Vert \nabla \ueps - \nabla u_0 \Vert_{L^2((0,T)\times \oe)}\right)
\\
&\le C \Vert \ueps - u_0 \Vert_{L^2((0,T)\times \oe)} + C \epsilon,
\end{align*}
which tends to $0$ for $\epsilon \to 0$.
\end{proof}

\subsubsection{Strong convergence of $\partial_t \reps$}

In this section we identify the limit of $\partial_t \reps$, \ie we show $\partial_t R_0 = g(u_0,R_0)$.

\begin{lemma}\label{LemmaStrongConvergenceDtReps}
Let $\weps \in L^1((0,T)\times \geps)$ and $w_0 \in  L^1((0,T),W^{1,1}(\Omega))$ such that $\weps \rightarrow w_0 $ strongly in the two-scale sense on $\geps$ in $L^1$.
Then we have
\begin{align*}
\feg \int_{\geps\left(\epsilon \left[\frac{\cdot}{\epsilon}\right] \right)} \weps (t,z) d\sigma \rightarrow w_0 \quad \mbox{ in } L^1((0,T)\times \Omega).
\end{align*}
\end{lemma}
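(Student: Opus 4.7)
The plan is to reduce the statement to the unfolding characterisation of strong two-scale convergence on $\geps$ provided in Lemma \ref{LemmaAequivalenzTSKonvergenzUnfolding} (Appendix \ref{SectionTwoScaleConvergence}). Because $\epsilon^{-1}\in \N$, the domain $\Omega$ is exactly tiled by the cells $\epsilon(Y+{\bf k})$, ${\bf k}\in K_{\epsilon}$, so the left-hand side is a well-defined piecewise constant function on this tiling: its value on the cell containing $x$ depends only on the restriction of $\weps$ to the sphere $\geps(\epsilon \bfxe)$.

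First I would establish a pointwise identification between the cell-wise average and the boundary unfolding. For any $x\in \epsilon(Y+{\bf k})$ one has $\epsilon \bfxe = \epsilon {\bf k}$, so the change of variables $z = \epsilon {\bf k} + \epsilon y$ with $y\in \Gamma$, together with $d\sigma_z = \epsilon^{n-1}d\sigma_y$ and $|\epsilon\Gamma| = \epsilon^{n-1}|\Gamma|$, yields
\begin{align*}
\feg \int_{\geps(\epsilon \bfxe)} \weps(t,z)\, d\sigma_z \;=\; \frac{1}{|\Gamma|}\int_{\Gamma} \teps(\weps)(t,x,y)\, d\sigma_y,
\end{align*}
where $\teps$ denotes the boundary unfolding operator associated to $\geps$.

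Second, since $w_0$ is independent of $y$, trivially $w_0(t,x) = \frac{1}{|\Gamma|}\int_{\Gamma} w_0(t,x)\, d\sigma_y$. Subtracting this from the identity above and applying Jensen's inequality in the $y$-variable give
\begin{align*}
\left\| \feg \int_{\geps(\epsilon\bfxe)} \weps(t,z)\, d\sigma - w_0 \right\|_{L^1((0,T)\times \Omega)} \;\le\; \frac{1}{|\Gamma|}\,\big\|\teps(\weps)-w_0\big\|_{L^1((0,T)\times \Omega \times \Gamma)}.
\end{align*}
By the unfolding characterisation of strong two-scale convergence on $\geps$ in $L^1$ (Lemma \ref{LemmaAequivalenzTSKonvergenzUnfolding}), the right-hand side tends to zero as $\epsilon\to 0$, which is the claim.

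No genuine obstacle is expected: the whole argument is a change of variables plus an appeal to the unfolding equivalence. The only point requiring care is setting up the boundary unfolding on the reference sphere $\Gamma$ so that the surface-measure scalings cancel correctly; the regularity assumption $w_0 \in L^1((0,T),W^{1,1}(\Omega))$ plays no active role in this particular estimate but ensures that the strong two-scale convergence hypothesis to a $y$-independent limit on $\geps$ is well-posed (so that $w_0$ admits a trace-like interpretation on the oscillating surface).
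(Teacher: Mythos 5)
Your proof is correct, and it takes a genuinely different and considerably shorter route than the paper's. The paper's argument introduces auxiliary quantities $g_\epsilon^0(t,x) = \feg\int_{\geps(x)}w_0\,d\sigma$ and $\heps^0(t,x) = g_\epsilon^0\big(t,\epsilon\big[\tfrac{x}{\epsilon}\big]\big)$ and splits the error as $\|\heps-w_0\|_{L^1}\le A_\epsilon^1+A_\epsilon^2+A_\epsilon^3$: $A_\epsilon^1$ compares $\weps$ with the trace of $w_0$ inside the cell average, $A_\epsilon^2$ shifts the sphere center from the cell corner $\epsilon\big[\tfrac{x}{\epsilon}\big]$ back to $x$, and $A_\epsilon^3$ compares the sphere average of $w_0$ with its pointwise value. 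The last two pieces are handled by approximating $w_0$ with $w_\ell\in C_0^{\infty}$ and invoking the scaled trace inequality and mean-value estimates; this is precisely where $w_0\in L^1((0,T),W^{1,1}(\Omega))$ is used. Your approach collapses all of this into a single step: the change of variables $z=\epsilon\big[\tfrac{x}{\epsilon}\big]+\epsilon y$, with $d\sigma_z=\epsilon^{n-1}d\sigma_y$ and $|\epsilon\Gamma|=\epsilon^{n-1}|\Gamma|$, identifies the left-hand side exactly with $\frac{1}{|\Gamma|}\int_\Gamma\teps(\weps)(t,x,y)\,d\sigma_y$; since the target $w_0(t,x)$ is $y$-independent, Jensen's inequality gives $\|\heps-w_0\|_{L^1((0,T)\times\Omega)}\le\frac{1}{|\Gamma|}\|\teps(\weps)-w_0\|_{L^1((0,T)\times\Omega\times\Gamma)}$, and this tends to zero by the unfolding characterization of strong two-scale convergence on $\geps$ (Lemma \ref{LemmaAequivalenzTSKonvergenzUnfolding}, which is stated for $p\in[1,\infty)$). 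What you gain is not only brevity: no density argument is needed and the $W^{1,1}$ regularity of $w_0$ plays no role whatsoever, so your argument establishes the conclusion under the weaker hypothesis $w_0\in L^1((0,T)\times\Omega)$. The $W^{1,1}$ assumption is still natural in the application (Proposition \ref{StrongConvergenceDtReps}), where one needs the limit $g(u_0,R_0)$ to have a well-defined trace-like behaviour, but for the lemma itself it is superfluous.
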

\begin{proof}
First of all we extend the function $w_0$ to a function in $L^1((0,T),W^{1,1}(\R^n))$ and use the same notation $w_0$ for the extended function.  Now, we define for all $(t,x) \in (0,T)\times \Omega$ 
\begin{align*}
\heps(t,x):=  \feg \int_{\geps\left(\epsilon \left[\fxe\right]\right)} \weps(t,z) d\sigma.
\end{align*}
Hence, we have to show that
\begin{align*}
\heps \rightarrow w_0 \quad \mbox{ in } L^1((0,T)\times \Omega).
\end{align*}
We also define for $(t,x)\in (0,T)\times \Omega$ (see $\eqref{DefinitionGepsx}$ for the definition of $\geps(x)$)
\begin{align*}
g_{\epsilon}^0(t,x) &:= \feg \int_{\geps(x) } w_0(t,z) d\sigma,
\\
\heps^0(t,x) &:= g_{\epsilon}^0\left(t,\epsilon \left[\fxe\right]\right).
\end{align*}
Then we have
\begin{align*}
\Vert &\heps - w_0 \Vert_{L^1((0,T)\times \Omega)} 
\\
&\le \Vert \heps - \heps^0 \Vert_{L^1((0,T)\times \Omega)} + \Vert \heps^0 - g_{\epsilon}^0\Vert_{L^1((0,T)\times \Omega)} + \Vert g_{\epsilon}^0 - w_0 \Vert_{L^1((0,T)\times \Omega)}
\\
&=: A_{\epsilon}^1 + A_{\epsilon}^2 + A_{\epsilon}^3.
\end{align*}
For the first term $A_{\epsilon}^1$ we have
\begin{align*}
A_{\epsilon}^1 &= \int_0^T\int_{\Omega} \left\vert \feg \int_{\geps\left(\epsilon \left[\fxe\right] \right)} \weps(t,z) - w_0(t,z) d\sigma_z \right\vert dx dt 
\\
&\le \feg \sum_{k \in K_{\epsilon}} \int_0^T\int_{\epsilon (Y+k)} \int_{\geps(\epsilon k)} \vert \weps(t,z) - w_0(t,z)\vert dz dx dt
\\
&\le \frac{\epsilon^n}{\vert \epsilon \Gamma\vert} \int_0^T \int_{\geps} \vert \weps(t,z) - w_0(t,z)\vert d\sigma dt
\\
&\le C \epsilon \Vert \weps - w_0 \Vert_{L^1((0,T)\times \geps)} \overset{\epsilon \to 0 }{\longrightarrow} 0,
\end{align*}
due to the strong two-scale convergence of $\weps$, see also Lemma \ref{LemmaAequivalenzTSKonvergenzUnfolding} in the Appendix \ref{SectionTwoScaleConvergence}.
For the term $A_{\epsilon}^2$ we use a density argument together with the mean value theorem. More precisely, let $w_\ell \in C_0^{\infty}((0,T)\times \R^n)$ be a sequence with $w_\ell \rightarrow w_0$ for $\ell \to \infty$ in $L^1((0,T)\times \R^n)$. Remember that $w_0$ is extended by zero to the whole $\R^n$. We mention that we can also choose a sequence converging in $L^1((0,T),W^{1,1}(\Omega))$, but this is not necessary. We define
\begin{align*}
g_{\epsilon}^\ell(t,x) &:= \feg \int_{\geps(x)} w_\ell(t,z)d\sigma,
\\
\heps^\ell(t,x) &:= g_{\epsilon}^\ell\left(t,\epsilon \left[\fxe\right] \right).
\end{align*}
Then we have
\begin{align*}
A_{\epsilon}^2 
&\le \Vert \heps^0 - \heps^\ell \Vert_{L^1((0,T)\times \Omega)} + \Vert \heps^\ell - g_{\epsilon}^\ell\Vert_{L^1((0,T)\times \Omega)} + \Vert g_{\epsilon}^\ell - g_{\epsilon}^0\Vert_{L^1((0,T)\times \Omega)}
\\
&=: B_{\epsilon,\ell}^1 + B_{\epsilon,\ell}^2 + B_{\epsilon,\ell}^3.
\end{align*}
For the first term we get with similar decomposition arguments as for the term $A_{\epsilon}^1$ above and the trace inequality $\eqref{ScaledTraceInequality}$
\begin{align*}
B_{\epsilon,\ell}^1 &\le C \epsilon \Vert w_\ell - w_0 \Vert_{L^1((0,T)\times \geps)}
\\
&\le C \left( \Vert w_0 - w_\ell \Vert_{L^1((0,T)\times \Omega)} + \epsilon \Vert \nabla w_0 - \nabla w_\ell \Vert_{L^1((0,T)\times \Omega)} \right).
\end{align*}
The first term on the right-hand side converges to $0$ for $\ell \to \infty$ uniformly with respect to $\epsilon$. The second term on the right-hand side goes to $0$ for $\epsilon \to 0$ (for fixed $\ell \in \N$). For the  term $B_{\epsilon,\ell}^2$ we  get
\begin{align*}
B_{\epsilon,\ell}^2 & \le \feg \int_0^T\int_{\Omega} \int_{\geps(x)} \left\vert w_\ell\left(t,z - \epsilon\left\{\fxe\right\}\right) - w_\ell(t,z) \right\vert d\sigma_z dx dt
\\
&\le \frac{\epsilon}{\vert \epsilon \Gamma\vert} \int_0^T \int_{\Omega } \int_{\geps(x)} \int_0^1 \left\vert \nabla w_\ell \left(t,z - s \ \epsilon\left\{\fxe\right\} \right)\right\vert ds d\sigma_z dx dt
\\
&\le C \epsilon \Vert \nabla w_\ell \Vert_{C^0([0,T]\times \overline{\Omega})}.
\end{align*}
Hence, $B_{\epsilon,\ell}^2 \rightarrow 0$ for $\epsilon \to 0 $ for every fixed $\ell \in \N$.
For $B_{\epsilon,\ell}^3$ we have
\begin{align*}
B_{\epsilon,\ell}^3 &\le \feg \int_0^T \int_{\Omega} \int_{\geps(0)}  \vert w_\ell (t,x + z ) - w_0(t,x+z)\vert d\sigma_z dx dt
\\
&= \feg \int_0^T\int_{\geps(0)} \int_{\Omega + z } \vert w_\ell (t,x) - w_0(t,x)\vert dx d\sigma_z dt
\\
&\le C \Vert w_\ell - w_0 \Vert_{L^1((0,T)\times \R^n)},
\end{align*}
and therefore $B_{\epsilon,\ell}^3 \rightarrow 0$ for $\ell \to \infty$, uniformly with respect to $\epsilon$.

It remains to estimate the term $A_{\epsilon}^3$. With the notation from above we obtain
\begin{align*}
A_{\epsilon}^3 &\le \Vert g_{\epsilon}^0 - g_{\epsilon}^\ell\Vert_{L^1((0,T)\times \Omega)} + \Vert g_{\epsilon}^\ell - w_\ell \Vert_{L^1((0,T)\times \Omega)} + \Vert w_\ell - w_0 \Vert_{L^1((0,T)\times \Omega)}.
\end{align*}
The first term was already considered above, and the last term obviously tends to $0$ for $\ell \to \infty$. For the second term, we argue as for $B_{\epsilon,\ell}^2$ to obtain 
\begin{align*}
\Vert g_{\epsilon}^\ell - w_\ell \Vert_{L^1((0,T)\times \Omega)} &\le \feg  \int_0^T\int_{\Omega} \int_{\geps(x) } \vert w_\ell(t,z) - w_\ell(t,x) \vert d\sigma_z dx dt
\\
&\le C\epsilon \Vert \nabla w_\ell \Vert_{C^0([0,T]\times \overline{\Omega})}.
\end{align*}
Hence, $A_{\epsilon}^3 $ goes to zero for $\epsilon \to 0$.
 Altogether, we obtain the desired result.
\end{proof}

As an immediate consequence we obtain the strong convergence of $\epsilon^{-1} \partial_t \reps$:
\begin{proposition}\label{StrongConvergenceDtReps}
Up to a subsequence $\epsilon \to 0$, it holds that
\begin{align*}
\epsilon^{-1} \partial_t\reps \rightarrow \partial_t R_0 \quad \mbox{in } L^p((0,T)\times \Omega)
\end{align*}
for $p \in [1,\infty)$, with $R_0$ satisfying the equation 
\begin{align*}
\partial_t R_0 = g(u_0,R_0) \quad \mbox{a.e. in } (0,T)\times \Omega.
\end{align*}
Especially, we have up to a subsequence
\begin{align*}
\epsilon^{-1} \partial_t \reps \rightarrow \partial_t R_0 \quad \mbox{ strongly in the two-scale sense on } \geps \mbox{ in } L^p.
\end{align*}
\end{proposition}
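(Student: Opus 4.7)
The plan is to recognize that by the ODE \eqref{DefMicProblFixedDomainODEReps},
\[
\epsilon^{-1}\partial_t \reps(t,x) \;=\; \feg \int_{\geps\left(\epsilon \bfxe\right)} g\!\left(\ueps(t,z),\,\frac{\reps(t,z)}{\epsilon}\right) d\sigma_z,
\]
which is exactly the kind of averaged surface quantity to which Lemma \ref{LemmaStrongConvergenceDtReps} applies. I therefore set $\weps(t,z) := g(\ueps(t,z), \reps(t,z)/\epsilon)$ and $w_0(t,x) := g(u_0(t,x), R_0(t,x))$, and reduce the task to checking the two hypotheses of the lemma: strong two-scale convergence $\weps \to w_0$ on $\geps$ in $L^1$, and $w_0 \in L^1((0,T),W^{1,1}(\Omega))$.

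For the first hypothesis, I would work through the unfolding operator $\teps$ (Lemma \ref{LemmaAequivalenzTSKonvergenzUnfolding}). Since $g \in W^{1,\infty}(\R^2)$ is in particular Lipschitz,
\[
\left| \teps(\weps) - w_0 \right| \;\le\; \|g\|_{W^{1,\infty}} \Big(\left|\teps(\ueps) - u_0\right| + \left|\teps(\epsilon^{-1}\reps) - R_0\right|\Big)
\]
pointwise on $(0,T)\times\Omega\times\Gamma$. Proposition \ref{StrongConvergenceUeps} gives the strong $L^2$-convergence of $\teps(\ueps)$ to $u_0$ on the $\Gamma$-slice, and Corollary \ref{StrongConvergenceRepsBoundary} provides the same for $\teps(\epsilon^{-1}\reps) \to R_0$ in every $L^p$. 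Hence $\weps \to w_0$ strongly in the two-scale sense on $\geps$ in $L^p$ for every $p \in [1,\infty)$.

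For the second hypothesis, the regularity $u_0 \in L^2((0,T),H^1(\Omega))$ from Corollary \ref{WeakTSCompactnessResults} together with $R_0 \in L^\infty((0,T),H^1(\Omega))$ (combining the $L^\infty$ bound from Corollary \ref{WeakTSCompactnessResults} with the spatial regularity established in Proposition \ref{StrongConvergenceReps} and Remark \ref{RemarkKonvergenzReps}) yields $w_0 = g(u_0,R_0) \in L^1((0,T),W^{1,1}(\Omega))$ via the chain rule for Lipschitz functions composed with Sobolev maps. With both hypotheses verified, Lemma \ref{LemmaStrongConvergenceDtReps} gives $\epsilon^{-1}\partial_t \reps \to g(u_0,R_0)$ in $L^1((0,T)\times\Omega)$, and the uniform bound $\|\epsilon^{-1}\partial_t\reps\|_{L^\infty((0,T)\times\Omega)} \le C$ from Lemma \ref{AprioriEstimatesSummary} upgrades this to $L^p((0,T)\times\Omega)$ convergence for every finite $p$ by dominated convergence. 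By uniqueness of the limit with the weak$^\ast$ convergence $\epsilon^{-1}\partial_t\reps \rightharpoonup^\ast \partial_t R_0$ from Corollary \ref{WeakTSCompactnessResults}, this identifies $\partial_t R_0 = g(u_0,R_0)$ a.e.

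Finally, for the strong two-scale convergence of $\epsilon^{-1}\partial_t\reps$ on $\geps$, I would reuse the trick from Corollary \ref{StrongConvergenceRepsBoundary}: since $\epsilon^{-1}\partial_t\reps$ is constant on each microscopic cell $\epsilon(Y+\mathbf{k})$, the unfolded sequence $\teps(\epsilon^{-1}\partial_t\reps)$ is $y$-independent so $\nabla_y \teps(\epsilon^{-1}\partial_t\reps) = 0$, and the trace inequality on $Y^\ast$ bounds the $L^p(\Omega\times\Gamma)$-norm of $\teps(\epsilon^{-1}\partial_t\reps) - \partial_t R_0$ by the $L^p(\Omega\times Y)$-norm, which tends to $0$ by the bulk convergence just proved. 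I do not anticipate a substantive obstacle: the only point requiring some care is the Sobolev regularity of the limit $w_0$, which is precisely why the spatial $H^1$-regularity of $R_0$ obtained in Proposition \ref{StrongConvergenceReps} (not merely $L^\infty$) is essential.
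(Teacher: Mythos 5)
Your proposal is correct and follows essentially the same route as the paper: set $\weps = g(\ueps,\epsilon^{-1}\reps)$, $w_0 = g(u_0,R_0)$, verify the hypotheses of Lemma~\ref{LemmaStrongConvergenceDtReps} via the strong two-scale convergence of $\ueps$, $\epsilon^{-1}\reps$ and the Lipschitz continuity and regularity of $g$, upgrade $L^1$ to $L^p$ by dominated convergence, identify $\partial_t R_0 = g(u_0,R_0)$ by uniqueness, and then obtain the surface two-scale convergence using that $\epsilon^{-1}\partial_t\reps$ is piecewise constant on micro cells. The only (cosmetic) difference is in the last step: you work directly in the unfolded variables where both $\teps(\epsilon^{-1}\partial_t\reps)$ and $\partial_t R_0$ are $y$-independent, whereas the paper writes the scaled trace inequality in $\oe$ and therefore first computes $\nabla\partial_t R_0 = \partial_1 g\,\nabla u_0 + \partial_2 g\,\nabla R_0 \in L^2$ --- your variant sidesteps that computation but both are valid and in spirit identical to Corollary~\ref{StrongConvergenceRepsBoundary}.
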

\begin{proof}
The convergence of $\epsilon^{-1} \partial_t \reps$ in $L^1((0,T)\times \Omega)$ is a direct consequence of Lemma \ref{LemmaStrongConvergenceDtReps} by choosing
\begin{align*}
\weps(t,x) &:= g \left(\ueps(t,x), \epsilon^{-1} \reps(t,x)\right),
\\
w_0(t,x) &:= g(u_0(t,x),R_0(t,x)).
\end{align*}
The strong convergence of $\ueps$ and $\epsilon^{-1} \reps$, see Proposition \ref{StrongConvergenceReps} and \ref{StrongConvergenceUeps}, and the Lipschitz continuity of $g$  imply the strong two-scale convergence of $\weps $ on $\geps$ in $L^1$. Further it holds that $w_0 \in L^2((0,T),H^1(\Omega))$, due to the Lipschitz-continuity of $g$ and the regularity of $u_0$ and $R_0$, see Proposition \ref{StrongConvergenceReps}. Then, Lemma \ref{LemmaStrongConvergenceDtReps} implies the convergence of $\epsilon^{-1} \partial_t \reps$ in $L^1((0,T)\times \Omega)$, and the result for arbitrary $p \in [1,\infty)$ follows again from the dominated convergence theorem of Lebesgue. 
Now, from $\partial_t R_0 = g(u_0,R_0)$, we obtain 
\begin{align*}
\nabla \partial_t R_0 = \partial_1 g(u_0,R_0) \nabla u_0 + \partial_2 g(u_0,R_0) \nabla R_0 \in L^2((0,T)\times \Omega)
\end{align*}
Using that $\reps$ is constant on each micro cell and the trace inequality $\eqref{ScaledTraceInequality}$, we obtain with the same arguments as in the proof of Corollary \ref{StrongConvergenceRepsBoundary}
\begin{align*}
\sqrt{\epsilon}\Vert \epsilon^{-1} \partial_t \reps - &\partial_t R_0 \Vert_{L^2((0,T)\times \geps)} 
\\
&\le C\left(\Vert \epsilon^{-1} \partial_t \reps - \partial_t R_0 \Vert_{L^2((0,T)\times \oe)} + \sqrt{\epsilon} \Vert \nabla \partial_t R_0\Vert_{L^2((0,T)\times \oe)}\right)
\\
&\overset{\epsilon\to 0}{\longrightarrow} 0.
\end{align*}
Arguing again as in Corollary \ref{StrongConvergenceRepsBoundary}, the convergence is also valid for all $p\in[1,\infty)$, and the proposition is proved.
\end{proof}

\begin{corollary}\label{StrongTSConvergenceDtJeps}
Up to a subsequence $\epsilon \to 0$, one has for every $p \in [1,\infty)$ that 
\begin{align*}
\partial_t \jeps \rightarrow \partial_t J_0 \qquad\mbox{strongly in the two-scale sense in } L^p.
\end{align*}
\end{corollary}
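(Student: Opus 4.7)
The plan is to differentiate $\jeps$ via Jacobi's formula and reduce the claim to the strong two-scale convergence of factors already controlled. Since $\feps = \nabla \seps$ and $\partial_t \feps = \epsilon^{-1}\partial_t \reps\,[\nabla_y(\chi_0\nu_0)](\cdot/\epsilon)$, Jacobi's formula gives
\begin{align*}
\partial_t \jeps = \jeps\,\mathrm{tr}\bigl(\fieps\,\partial_t \feps\bigr) = \jeps\,\frac{\partial_t \reps}{\epsilon}\,\mathrm{tr}\!\left(\fieps\,[\nabla_y(\chi_0\nu_0)](\cdot/\epsilon)\right),
\end{align*}
and, by the same calculation at the two-scale level,
\begin{align*}
\partial_t J_0 = J_0\,\partial_t R_0\,\mathrm{tr}\!\left((\nabla_y S_0)^{-1}\,\nabla_y(\chi_0\nu_0)\right).
\end{align*}
Hence it suffices to pass to the strong two-scale limit in a product of sequences whose strong two-scale convergence has already been established.

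More precisely, Corollary \ref{StrongTSConvergenceCoefficients} delivers $\jeps \to J_0$ strongly in the two-scale sense in $L^p$, and the argument given there for $\deps$ (based on the identity $A^{-1}-B^{-1}=A^{-1}(B-A)B^{-1}$ together with the uniform bound \eqref{BoundInverseMatrix}) applies verbatim to yield $\fieps \to (\nabla_y S_0)^{-1}$ strongly in the two-scale sense in $L^p$; both sequences are uniformly $L^\infty$-bounded by Lemma \ref{AprioriEstimatesSummary}. Proposition \ref{StrongConvergenceDtReps} gives $\epsilon^{-1}\partial_t \reps \to \partial_t R_0$ strongly in $L^p((0,T)\times\Omega)$, and since $\partial_t \reps$ is constant on each micro cell with a $y$-independent limit, this amounts to strong two-scale convergence in $L^p$. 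Finally, $[\nabla_y(\chi_0\nu_0)](\cdot/\epsilon)$ is a smooth admissible two-scale test function.

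I would conclude via the unfolding operator $\teps$, using the equivalence of strong two-scale convergence and strong $L^p$-convergence of the unfolded sequence (Lemma \ref{LemmaAequivalenzTSKonvergenzUnfolding}). Multiplicativity of $\teps$ gives
\begin{align*}
\teps(\partial_t \jeps) = \teps(\jeps)\,\teps\!\bigl(\epsilon^{-1}\partial_t \reps\bigr)\,\mathrm{tr}\bigl(\teps(\fieps)\,\nabla_y(\chi_0\nu_0)\bigr),
\end{align*}
and combining the uniform $L^\infty$-bounds of $\teps(\jeps)$, $\teps(\fieps)$ and $\nabla_y(\chi_0\nu_0)$ with the strong $L^p$-convergences above, the product converges strongly to $\partial_t J_0$ in $L^p((0,T)\times\Omega\times Y^\ast)$ for every $p\in[1,\infty)$. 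There is essentially no obstacle; the only technical point is that products of strongly two-scale convergent sequences pass to the limit provided at least one factor is uniformly essentially bounded, which is the case throughout.
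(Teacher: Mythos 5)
Your proof is correct and follows essentially the same route as the paper's (one-line) argument: apply the Jacobi formula $\partial_t \jeps = \jeps\,\mathrm{tr}(\fieps\,\partial_t\feps)$ and combine the strong two-scale convergences of $\jeps$, $\fieps$ (established inside the proof of Corollary \ref{StrongTSConvergenceCoefficients}), and $\epsilon^{-1}\partial_t\reps$ (Proposition \ref{StrongConvergenceDtReps}), passing to the limit in the product via the uniform $L^\infty$-bounds and the unfolding characterization. You merely make explicit what the paper leaves as a ``direct consequence.''
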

\begin{proof}
This is a direct consequence of the Jacobi formula $\eqref{JacobiFormula}$ and the strong two-scale convergence results in Corollary \ref{StrongTSConvergenceCoefficients} and Proposition \ref{StrongConvergenceDtReps}.
\end{proof}

\subsection{Derivation of the macroscopic equation}

In this section we derive the macroscopic model in the limit $\epsilon \to 0$. In a first step, we derive the cell problems on the reference element $Y^{\ast}$. In the second step, we derive the macroscopic equation on $\Omega$. We emphasize that we already derived the limit equation for the radius in Proposition \ref{StrongConvergenceDtReps}. 

First of all, choose $\peps(t,x)= \epsilon \phi\left(t,x,\fxe\right)$ for $\phi \in C_0^{\infty}((0,T)\times \Omega, C_{\per}^{\infty}(Y^{\ast}))$ as a test-function in $\eqref{WeakFormulationMicroscopicModel}$. Due to our \textit{a priori} estimates in Lemma \ref{AprioriEstimatesSummary}, all the terms except the diffusion term are of order $\epsilon$. Hence, for $\epsilon \to 0$ we obtain from the strong two-scale convergence of $\deps$ from Corollary \ref{StrongTSConvergenceCoefficients} and the two-scale convergence of $\nabla \ueps$ from Corollary \ref{WeakTSCompactnessResults}
\begin{align*}
\int_0^T\int_{\Omega}\int_{Y^{\ast}} D_0(t,x,y) \big[\nabla_x u_0(t,x) + \nabla_y u_1(x,y)\big] \cdot \nabla_y \phi(t,x,y) dy dx dt = 0.
\end{align*}
From the linearity of the problem we obtain for $u_1$ the representation
\begin{align*}
u_1(t,x,y) = \sum_{i=1}^n \partial_{x_i} u_0(t,x) w_i(t,x,y),
\end{align*}
where $w_i \in L^2((0,T)\times \Omega,H^1(Y^{\ast})/\R)$ are the solutions of the cell problems
\begin{align}
\begin{aligned}
\label{CellProblems}
- \nabla_y \cdot \left( D_0 \big(\nabla_y w_i + e_i\big)\right) &= 0 \qquad \mbox{ in } (0,T)\times \Omega \times Y^{\ast},
\\
- D_0 \big(\nabla_y w_i + e_i \big) \cdot \nu &= 0 \qquad \mbox{ on } (0,T)\times \Omega \times \Gamma,
\\
w_i \mbox{ is } Y\mbox{-periodic, } \int_{Y^{\ast}}& w_i(t,x,y) dy = 0 \quad \mbox{ f.a.e. } (t,x) \in (0,T)\times \Omega.
\end{aligned}
\end{align}
We emphasize that the $L^{\infty}$-regularity of $D_0$ with respect to $(t,x)$ implies the $L^{\infty}$-regularity of $\nabla_y w_i$ with respect to $(t,x)$.

Next, we derive the macroscopic equation. We choose $\peps  = \phi$ with $\phi \in C^{\infty}_0\big((0,T)\times \overline{\Omega}\big)$ as a test-function in $\eqref{WeakFormulationMicroscopicModel}$ and integrate with respect to time: 
\begin{align}
\begin{aligned}
\label{DerivationMacProVarEquation}
\int_0^T\langle &\partial_t (\jeps \ueps) , \phi  \rangle_{H^1(\oe)',H^1(\oe)} dt  - \int_0^T\int_{\oe} \ueps \partial_t \jeps \phi dxdt \\
& - \int_0^T\int_{\oe} \veps \cdot \nabla \ueps \phi dx  dt0
+ \int_0^T\int_{\oe} \deps \nabla \ueps \cdot \nabla \phi dxdt \\ = \int_0^T& \int_{\oe} \jeps f_{\epsilon} \phi dxdt - \int_0^T\int_{\geps} \partial_t \reps (\ueps - \rho ) \jeps \phi  d\sigma dt.
\end{aligned}
\end{align}
Let us pass to the limit in every single term. For the term including the time-derivative we use the following fact: First of all, using the strong  two-scale convergence of $\jeps$ from Corollary \ref{StrongTSConvergenceCoefficients} and $\ueps$ from Proposition \ref{StrongConvergenceUeps}, it is easy to check that $\jeps \ueps$ converges (even strongly by the dominated convergence theorem of Lebesgue \cite[Theorem 3.25]{AltLFAenglisch}) in the two-scale sense to $J_0 u_0$ in $L^2$. Then, due to the \textit{a priori} estimate of $\jeps \ueps$ in Lemma \ref{AprioriEstimatesSummary}, we obtain that for the zero extension $\widetilde{\jeps \ueps}$ of $\jeps \ueps$ to the whole domain $\Omega$ it holds that (see also Lemma \ref{TwoScaleTimeDerivative} in the appendix)
\begin{align*}
\partial_t \left(\widetilde{\jeps \ueps} \right) \rightharpoonup \partial_t \big(\bar{J}_0 u_0\big) \quad \mbox{ weakly in } L^2((0,T),H^1(\Omega)'),
\end{align*}
with
\begin{align*}
\bar{J}_0 (t,x):= \int_{Y^{\ast}} J_0(t,x,y) dy.
\end{align*}
\begin{remark}\label{RegularityBarJ0}
Since $J_0$ is  polynomial in $R_0$ with coefficients depending on $y$ in a smooth way, we easily obtain that $\bar{J}_0 \in L^{\infty}((0,T),H^1(\Omega))$.
\end{remark}
Now, we obtain
\begin{align*}
\int_0^T\langle \partial_t (\jeps \ueps) , \phi  \rangle_{H^1(\oe)',H^1(\oe)} dt &= \int_0^T\left\langle \partial_t \left( \widetilde{\jeps \ueps}\right), \phi \right\rangle_{H^1(\Omega)',H^1(\Omega)} dt
\\
& \overset{\epsilon \to 0}{\longrightarrow} \int_0^T \langle \partial_t (\bar{J}_0 u_0) , \phi \rangle_{H^1(\Omega)',H^1(\Omega)} dt.
\end{align*}
We emphasize that $\bar{J}_0$ is the volume of the moving cell $Y^{\ast}(t,x)$ defined by
\begin{align*}
Y^{\ast}(t,x):= Y \setminus \overline{B_{R_0(t,x)}(x)},
\end{align*}
\ie we have $J_0(t,x) = \vert Y^{\ast}(t,x)\vert$.
For the second term on the left-hand side in $\eqref{DerivationMacProVarEquation}$ we use the strong two-scale convergence of $\ueps$ from Proposition \ref{StrongConvergenceUeps}, as well as the  two-scale convergence of $\partial_t \jeps $ to $\partial_t J_0$, see Proposition \ref{StrongTSConvergenceDtJeps}, to obtain
\begin{align*}
\int_0^T\int_{\oe} \ueps \partial_t \jeps \phi dx dt \overset{\epsilon \to 0 }{\longrightarrow} \int_0^T \int_{\Omega}\int_{Y^{\ast}} u_0(t,x) \partial_t J_0(t,x,y) \phi(t,x) dy dx dt.
\end{align*}
Since the $L^2$-norm of $\veps$ is of order $\epsilon$, the third term on the left-hand side in $\eqref{DerivationMacProVarEquation}$ vanishes for $\epsilon \to 0$. Using the strong two-scale convergence of $\deps$ from Corollary \ref{StrongTSConvergenceCoefficients} and the two-scale convergence of $\nabla \ueps $ from Corollary \ref{WeakTSCompactnessResults}, we obtain
\begin{align*}
\lim_{\epsilon\to 0 } \int_0^T \int_{\oe} \deps \nabla \ueps \cdot \nabla \phi dx dt &= \int_0^T\int_{\Omega} \int_{Y^{\ast}} D_0  \big[\nabla u_0  + \nabla_y u_1\big] \cdot \nabla \phi dy dx dt 
\\
&= \int_0^T\int_{\Omega} D_0^{\ast}(t,x) \nabla u_0 \cdot \nabla \phi dx dt ,
\end{align*}
with the homogenized diffusion coefficient $D_0^{\ast} \in L^{\infty}((0,T)\times \Omega)^{n\times n}$ defined by
\begin{align}\label{DefinitionHomogenizedDiffusion}
\big(D_0(t,x)^{\ast}\big)_{ij} := \int_{Y^{\ast}} D_0(t,x,y) \big[\nabla_y w_i(t,x,y) + e_i \big]\cdot \big[ \nabla_y w_j(t,x,y) + e_j\big] dy,
\end{align}
where $w_i$ are the solutions of the cell problems $\eqref{CellProblems}$.
For the first term on the right-hand side of $\eqref{DerivationMacProVarEquation}$ we have
\begin{align*}
\lim_{\epsilon \to 0 } \int_0^T \int_{\oe} \jeps f_{\epsilon} \phi dx dt  = \int_0^T \int_{\Omega} \int_{Y^{\ast}} J_0 f_0 \phi dy dx dt,
\end{align*}
where we used the two-scale convergence of $\jeps$ and $\seps$, and the continuity of $f$, see Assumption \ref{AssumptionRechteSeitef}. It remains to pass to the limit in the boundary term in $\eqref{DerivationMacProVarEquation}$. We only consider in more detail the term including $\ueps$.
The strong two-scale convergence of $\epsilon^{-1} \partial_t \reps $ and $\jeps $ on $\geps $ in $L^p $ from Proposition \ref{StrongConvergenceDtReps} and Corollary \ref{StrongTSConvergenceCoefficients}, as well as the strong two-scale convergence of $\ueps$ on $\geps $ in $L^2$ from Proposition \ref{StrongConvergenceUeps}  imply the strong two-scale convergence of the product $\epsilon^{-1} \partial_t \reps \ueps \jeps$ on $\geps $ in $L^q$ for $q = \frac{2p}{4  + p} \in (1, 2)$ (with $p \gr 4$). In fact, this is an immediate consequence of the characterization of the two-scale convergence via the unfolding operator, see Lemma \ref{LemmaAequivalenzTSKonvergenzUnfolding} in the Appendix \ref{SectionTwoScaleConvergence}. Hence, we obtain
\begin{align*}
\lim_{\epsilon\to 0 }\int_0^T &\int_{\geps}  \partial_t \reps (\ueps - \rho) \jeps \phi  d\sigma dt = \int_0^T \int_{\Omega} \partial_t R_0 (u_0  - \rho) \phi  \int_{\Gamma}J_0   d\sigma_y dx dt.
\end{align*}

Let us define the moving cell surface $\Gamma(t,x)$ by 
\begin{align*}
\Gamma(t,x):= \partial B_{R_0(t,x)}(\m).
\end{align*}
Using the identity $\eqref{FormulaGradientS0}$ we obtain $ \vert \nabla_y S_0^{-T}\nu\vert = \vert \nu\vert = 1$ on $\Gamma$, and we get that
\begin{align*}
\int_{\Gamma} J_0    d\sigma_y = \int_{\Gamma(t,x)} d\sigma = \vert \Gamma(t,x)\vert
\end{align*}
is the surface area of $\Gamma(t,x)$.
Altogether we obtain in the limit $\epsilon \to 0$:
\begin{align*}
\int_0^T \langle \partial_t (\bar{J}_0 u_0 ) ,& \phi \rangle_{H^1(\Omega)',H^1(\Omega)} dt - \int_0^T \int_{\Omega} \partial_t \bar{J}_0 u_0 \phi dx dt + \int_0^T \int_{\Omega} D_0^{\ast} \nabla u_0 \cdot \nabla \phi dx dt 
\\
&= \int_0^T \int_{\Omega} \int_{Y^{\ast}} J_0 f_0 dy \phi dx dt - \int_0^T \int_{\Omega} \partial_t R_0 (u_0 - \rho) \phi \vert \Gamma(t,x)\vert dx dt
\end{align*}
for all $\phi \in C_0^{\infty}((0,T)\times \overline{\Omega})$ and by density for all $\phi \in L^2((0,T),H^1(\Omega))$. Using the relation $\partial_t J_0 = \nabla_y \cdot V_0$ and the notation $q:= \int_{Y^{\ast}} \nabla_y \cdot V_0 dy$, we obtain the the variatioanl equation $\eqref{MacroModelVar}$. 
To finish the proof of Theorem \ref{MainResultConvergenceMacroModel} we have to establish the initial condition $u(0) = \uin$, where we have to show $u^0 \in C^0([0,T],L^2(\Omega)$. Of course, due to Lemma   \ref{TwoScaleTimeDerivative}, it holds that  $(\bar{J}_0 u_0)(0) = \bar{J}_0 \uin$.
The spatial regularity of $R_0$ from Proposition \ref{StrongConvergenceReps}, the equality $\partial_t R_0 = g(u_0,R_0) $ almost everywhere in $(0,T) \times \Omega$ and the boundedness of $g$ imply 
\begin{align*}
    R_0 \in C^{0,1}([0,T],L^{\infty}(\Omega)) \cap L^{\infty}((0,T),H^1(\Omega)).
\end{align*}
Together with the positivity of $J_0$, we obtain
\begin{align*}
    \bar{J}_0^{-1} \in C^{0,1}([0,T],L^{\infty}(\Omega)) \cap L^{\infty } ((0,T),H^1(\Omega)).
\end{align*}
Hence, for $q \gr n $ we obtain
\begin{align*}
     \bar{J}_0^{-1} \partial_t (\bar{J}_0 u_0) - \bar{J}_0^{-1} u_0 \partial_t \bar{J}_0 \in  L^2((0,T),W^{1,q}(\Omega)').
\end{align*}
Now, the product rule implies 
\begin{align*}
    \partial_t u_0 = \bar{J}_0^{-1} \partial_t (\bar{J}_0 u_0) - \bar{J}_0^{-1} u_0 \partial_t \bar{J}_0 \in L^2((0,T),W^{1,q}(\Omega)').
\end{align*}
 Especially, due to \cite[Lemma 7.3]{Roubicek}, we have $u_0 \in C^0([0,T],L^2(\Omega))$ and the initial condition $u_0 (0) \in L^2(\Omega)$ makes sense.  Using $(\bar{J}_0 u_0)(0) = \bar{J}_0 \uin$ and $\bar{J}_0 \in C^0([0,T],L^{\infty}(\Omega)$, we get the initial condition $u_0(0) = \uin$.

\begin{remark}\label{RemarkMainResult}\mbox{}
\begin{enumerate}
[label = (\roman*)]
\item The proof above gives the following regularity results 
\begin{align*}
\bar{J}_0 &\in C^{0,1}([0,T],L^{\infty}(\Omega)) \cap L^{\infty } ((0,T),H^1(\Omega)),
\\
\partial_t u_0 &\in L^2((0,T),W^{1,q}(\Omega)')
\end{align*}
for $q \gr n$.
\item The macroscopic equation is formulated on the fixed domain $\Omega$. The moving boundary is in the cell problems. In fact, if we define
\begin{align*}
\tilde{w}_i(t,x,y):= w_i(t,x,S_0^{-1}(t,x,y)) \quad \mbox{for } (t,x,y) \in (0,T)\times \Omega \times Y^{\ast}(t,x),
\end{align*}
then $\tilde{w}_i$ solves the following problem on the moving cell $Y^{\ast}(t,x)$ 
\begin{align*}
-\nabla_y \cdot \big(D (\nabla_y \tilde{w}_i + \nabla_y (S_0^{-1})^Te_i \big) &= 0 &\mbox{ in }& Y^{\ast}(t,x),
\\
-D (\nabla_y \tilde{w}_i + \nabla_y (S_0^{-1})^T e_i) \cdot \nu &= 0 &\mbox{ on }& \Gamma(t,x),
\\
\int_{Y^{\ast}(t,x)} J_0^{-1}(t,x,S_0^{-1}(t,x,y))\tilde{w}_i(t,x,y)  dy &= 0, &  \tilde{w}_i & \mbox{ is } Y\mbox{-periodic}.
\end{align*}
The evolution of $Y^{\ast}(t,x)$ is given by $S_0$ and therefore coupled via the ODE for $R_0$ to the macroscopic solution $u_0$.
\end{enumerate}
\end{remark}

\section{Conclusion}
\label{SectionConclusion}

We analyzed a reaction-diffusion problem in an evolving micro-domain depending on the solution of the equation, and derived a macroscopic model. 
The homogenized model is only depending on the macroscopic variable $x \in \Omega$. The information about the evolving microstructure are contained in the effective coefficients. The homogenized diffusion coefficient is given via solutions of cell problems on evolving reference element, depending on the limit functions $u_0$ and $R_0$. Hence, in every macroscopic point $x \in \Omega$, we have to solve a cell problem depending on the solution itself, what leads to a strongly coupled problem which numerical treatment is highly challenging.

We emphasize that the methods in this paper are not restricted to the scalar case and linear reaction kinetics. In fact, the results simply extend to systems with Lipschitz-continuous reaction rates for example of Michaelis-Menton-type.

Our results depend highly on the assumption that the evolution of the surface is given by an ordinary equation. In general, one has to consider for example an hyperbolic level set problem coupled to the transport equation, and even global-in-time solutions for the microscopic solutions are not guaranteed and even cannot be expected. However, for the treatment of more realistic applications on has to take into account such kind of models.

In our results we made no statements about the uniqueness of the micro- and the macro-model. For the homogenization process, uniqueness for the upscaled model is important to obtain the convergence of the whole sequence, which is not guaranteed in our case. However, the low regularity of the product $\bar{J}_0 u_0$ and $\partial_t u_0$ causes trouble for the application of standard energy arguments. In the microscopic model, especially the nonlinear  boundary term  including the time-derivative $\partial_t \reps$ makes things complicated. Here, one should take into account entropy methods, see for example \cite{MalekNecasRokytaRuzicka1996, Otto1996}.

From a physical point of point of view we would expect nonegativity and essential boundedness for a solution. An upper bound can be obtained for the micro-model under additional assumptions on the data, e.g.,  boundedness of $\uepsin$ and $f=0$ (or $f$ depending on $\ueps$ with suitable growth conditions). Nonnegativity for the micro-model can be obtained for growing grains, \ie $\partial_t \reps \geq 0$. In the present model, using the  average over the  boundary of the perforation in the differential equation of $\reps$ complicates the proof for nonnegativity, as there is no  pointwise relation between $\ueps$ and $\partial_t \reps$. On the other hand, this structure guarantees that the perforations remain radially symmetric. 
The treatment of these problems are part of our ongoing work.

\section*{Acknowledgments}
The research of both authors was supported by the Research Foundation-Flanders (FWO), Belgium, through the Odysseus programme (project G0G1316N). M. Gahn was also supported by SCIDATOS (Scientific Computing for Improved Detection and Therapy of Sepsis), a  project funded by the Klaus Tschira Foundation, Germany (Grant Number 00.0277.2015).

\begin{appendix}
\section{Some elemental calculations}
\label{SectionAppendixHanzawaReferenceElement}

Recalling \eqref{eq:Ru}, we choose a fixed $\delta_0 \in \big(0, \min\{ \frac12 - \oR , \oR \}\big)$ 
and define the symmetric cut-off function $\chi \in C_0^{\infty}(\R))$ such that 
\begin{align*}
0 \le \chi(z) & \le 1 \quad  \text{ for all } z \in \R, 
\\
\chi(z) & = 1 \quad \mbox{ if } |z| \le \frac{\delta_0}{2},
\\
\chi(z) & = 0 \quad \mbox{ if } |z| \geq \delta_0,
\\
z \chi'(z)& < 0 \quad \mbox{ if } |z| \in \left(\frac{\delta_0}{2}, \delta_0\right). 
\end{align*}
Further, with $\Gamma = \partial B_{\oR}(\m)$ being the sphere of radius $\oR$ centered in $\m$,  we let $d_{\Gamma}$ denote the signed-distance function to $\Gamma$ 
\begin{align*}
d_{\Gamma} (y) = \begin{cases}
\mathrm{dist} (y,\Gamma) &\mbox{ if } \vert y \vert \geq \oR,
\\
- \mathrm{dist}(y,\Gamma) &\mbox{ if } \vert y \vert < \oR.
\end{cases}
\end{align*}
Clearly, for any $y \in \R^n$ one has $d_{\Gamma} (y) = \|y - \m \|- \oR$  which is a smooth function for all arguments $y \neq \m$. 

With this, we consider the function 
\begin{align}\label{eq:chi0}
\chi_0: Y \rightarrow \R, \quad \chi_0(y) = \chi(d_{\Gamma}(y)), 
\end{align}
which is $Y$-periodic and smooth (also in $\m$, since it vanishes in a neighborhood of $\m$), having a compact support in $Y$. 

Further, for any $y \in \Gamma$ we denote by $\nu_0(y)$ the unit normal in $y$ to $\Gamma$ pointing outwards $B_{\oR}(\m)$. In this simplified setting we have $\nu_0(y) = \frac{y - \m}{|y - \m|} = \nabla d_{\Gamma}(y)$, and we use the same expression to extend $\nu_0$ to the set $Y\setminus \{\m\}$.

With $r$ s.t. $0<  r < \frac12$ we let $Y_r^{\ast}$ be the perforated cell 
\begin{align*}
Y_r^{\ast} := Y \setminus \overline{B_r(\m)}.  
\end{align*}
Recalling that the radius in \eqref{MicroscopicModel_ODE} is time-dependent, given the function $R_0: [0,T] \rightarrow \big[\uR,\oR\big]$ we define the Hanzawa transform $S_0:[0,T] \times Y_{\oR}^{\ast} \rightarrow Y$ by
\begin{align*}
S_0(t,y):= y + \big(R_0(t) - \oR\big) \chi_0(y) \nu_0(y).
\end{align*} 
Clearly, an equivalent form of $S_0$ reads
\begin{align*}
	S_0(t,y):= \m + \frac 1 {\|y - \m\|} \big[\|y - \m\| + \big(R_0(t) - \oR\big) \chi(\|y-\m\| -\oR) \big](y-\m).
\end{align*} 

As will be seen below, for any $t \in [0,T]$,  $S_0(t,\cdot): Y_{\oR}^{\ast} \rightarrow Y_{R_0(t)}^{\ast}$ is a bijective mapping. Note that the function $S_0$ is defined on the entire cell $Y$, but is only bijective in a neighborhood of $\Gamma$.

With $I_n$ denoting the unit matrix in $\R^{n\times n}$, the derivatives of $S_0$ are \begin{align*}
\nabla S_0(t,y) &= I_n + \big(R_0(t) - \oR\big) \left[ \nu_0(y) \otimes  \nabla_y \chi_0(y) + \chi_0 (y) \nabla_y \nu_0(y) \right],
\\
\partial_t S_0(t,y) &= R_0'(t) \chi_0(y) \nu_0(y).
\end{align*}
Also, for the functions $\chi_0$ and $\nu_0$ we have
\begin{align*}
\nabla \chi_0(y) &=  \frac{y-\m}{|y - \m|} \chi'\big(|y - \m| - \oR\big) = \nu_0(y)  \chi'\big(|y - \m| - \oR\big),
\\
\nabla \nu_0(y) &= \frac{1}{|y - \m|} I_n - \frac{1}{|y - \m|^3} (y - \m ) \otimes (y - \m) = \frac{1}{|y - \m|} I_n - \frac{1}{|y - \m|} \nu_0(y) \otimes \nu_0(y). 
\end{align*}
This gives 
\begin{align}\label{FormulaGradientS0}
\nabla S_0 = \left(1 + \frac{\chi_0}{|y- \m|} (R_0 - \oR) \right) I_n + \left[(R_0 - \oR) \left(\chi'(|y-\m| - \oR) - \frac{\chi_0}{|y-\m|} \right) \right] \nu_0 \otimes \nu_0 .
\end{align}

To compute the Jacobian determinant of $S_0$ we use the matrix determinant lemma (see \cite{Harville}, Theorem 13.3.8., or \cite{DING2007}) stating that 
\begin{align*}
\det (A + u \otimes v) = (1 + v^T A^{-1} u) \det (A),
\end{align*}
for any invertible matrix $A\in \R^{n\times n}$ and column vectors $u, v\in \R^n$. 
After an elemental calculation, one has  
\begin{align*}
\det(\nabla S_0(t,y)) = \left(1 + \chi'(|y-\m| - \oR) (R_0(t) - \oR) \right) \cdot \left( 1 + \frac{\chi_0(y)}{|y-\m|} (R_0(t) - \oR) \right)^{n-1}.
\end{align*}
Using the definition of $\chi$ and with $R_0 \in [\uR,\oR]$, for all $y \in Y_{\oR}^{\ast}$ one has 
\begin{align*}
1 \le 1 + \chi'(|y-\m| - \oR) (R_0 - \oR) \le 1 + \Vert \chi'\Vert_{L^{\infty}(\R)}\big(\oR - \uR\big),
\\
0 < \frac{\uR}{\oR} \le  \frac{R_0}{\oR} \le  1 + \frac{\chi_0(y)}{|y-\m|} (R_0 - \oR)  \le 1,
\end{align*}
as $\vert y - \m \vert \geq \oR$. Therefore
\begin{align*}
0 <\left(\frac{\uR}{\oR}\right)^{n-1} \le \det(\nabla S_0(t,y)) \le 1 + \|\chi'\|_{L^{\infty}(\R)}\big(\oR - \uR\big).
\end{align*}
In particular, this shows that $S_0(t,\cdot)$ is a bijection from $Y_{\oR}^{\ast}$ to  $Y_{R_0(t)}^{\ast}$.

\section{Two-scale convergence and unfolding}
\label{SectionTwoScaleConvergence}
In this section we briefly summarize the concept of the two-scale convergence and the unfolding operator. These methods provide the basic techniques to pass to the limit $\epsilon \to 0$ in the microscopic problem.

\subsection{Two-scale convergence}
\label{SubsectionTwoScaleConvergence}
We start with the definition of  the two-scale convergence, which was first introduced and analyzed in \cite{Nguetseng} and \cite{Allaire_TwoScaleKonvergenz}, see also \cite{LukkassenNguetsengWallTSKonvergenz}.

\begin{definition}\label{DefinitionTSConvergence}
A sequence $\ueps \in  L^p((0,T)\times \Omega)$ for $p\in [1,\infty)$ is said to converge  in the two-scale sense in $L^p$ to the limit function $u_0\in L^p((0,T)\times  \Omega \times Y)$, if for every $\phi \in L^{p'}((0,T)\times \Omega, C_{\per}(Y))$  the following relation holds
\begin{align*}
\lim_{\epsilon\to 0}\int_0^T \int_{\Omega}u_{\epsilon}(t,x)\phi\left(t,x,\frac{x}{\epsilon}\right)dxdt = \int_0^T\int_{\Omega}\int_Y u_0(t,x,y)\phi(t,x,y)dydxdt .
\end{align*}
A  two-scale convergent sequence $u_{\epsilon}$ convergences strongly in the two-scale sense to $u_0$, if  
\begin{align*}
\lim_{\epsilon\to 0}\|u_{\epsilon}\|_{L^p((0,T)\times \Omega)} =\|u_0\|_{L^p((0,T)\times \Omega  \times Y)} .
\end{align*}
\end{definition} 

\begin{remark}\label{RemarkCharacterizationStrongTSConvergence}
Let $u_0 \in L^p((0,T)\times \Omega,C_{\per}^0(Y))$. Then $\ueps $ converges strongly in the two-scale sense to $u_0$ in $L^p$ if and only if 
\begin{align*}
\lim_{\epsilon\to 0}\left\Vert \ueps - u_0 \left(x,\fxe\right)\right\Vert_{L^p((0,T)\times \Omega)}  = 0.
\end{align*}
\end{remark}

In \cite{AllaireDamlamianHornung_TwoScaleBoundary,Neuss_TwoScaleBoundary} the method of two-scale convergence was extended to oscillating surfaces:
\begin{definition}\label{DefinitionTSKonvergenzBoundary}
A sequence of functions $\ueps  \in L^p((0,T)\times\Gamma_{\epsilon})$ for $p \in [1,\infty)$ is said to converge  in the two-scale sense on the surface $\Gamma_{\epsilon}$ in $L^p$ to a limit $u_0\in L^p((0,T)\times \Omega \times \Gamma)$, if for every $\phi \in C\left([0,T]\times\overline{\Omega},C_{per}(\Gamma)\right)$ it holds that
\begin{align*}
\lim_{\epsilon \to 0} \epsilon \int_0^T\int_{\Gamma_{\epsilon}} u_{\epsilon}(t,x)\phi\left(t,x,\frac{x}{\epsilon}\right)d\sigma dt = \int_0^T\int_{\Omega}\int_{\Gamma} u_0(t,x,y)\phi(t,x,y)d\sigma_ydxdt .
\end{align*}

We say a  two-scale convergent sequence $u_{\epsilon}$ converges strongly in the two-scale sense, if additionally it holds that
\begin{align*}
\lim_{\epsilon\to 0}  \epsilon^{\frac{1}{p}}\|u_{\epsilon}\|_{L^p((0,T)\times \Gamma_{\epsilon})} = \|u_0\|_{L^p((0,T)\times \Omega \times \Gamma)} .
\end{align*}
\end{definition}


We have the following compactness results (see e.g., \cite{Allaire_TwoScaleKonvergenz,LukkassenNguetsengWallTSKonvergenz,Neuss_TwoScaleBoundary}:
\begin{lemma}\label{BasicTwoScaleCompactness}\
For every $p \in (1,\infty)$ we have:
\begin{enumerate}
[label = (\roman*)]
\item For every bounded sequence $\ueps \in L^p((0,T)\times \Omega)$ there exists $u_0 \in L^p((0,T)\times \Omega \times Y)$ such that up to a subsequence
\begin{align*}
\ueps &\rightarrow u_0 \qquad \mbox{in the two-scale sense in } L^p.
\end{align*}
\item For every bounded sequence $\ueps \in L^p((0,T),W^{1,p}(\Omega))$ there exist $u_0 \in L^p((0,T)\times \Omega)$ and $u_1 \in L^p((0,T)\times \Omega,W^{1,p}_{\per}(Y)/\R)$, such that up to a subsequence
\begin{align*}
\ueps &\rightarrow u_0 &\mbox{ in the two-scale sense in } L^p,
\\
\nabla \ueps &\rightarrow \nabla_x u_0 + \nabla_y u_1 &\mbox{ in the two-scale sense in } L^p.
\end{align*}
\item For every sequence $\ueps \in L^p((0,T),W^{1,p}(\Omega))$ with $\ueps $ and $\epsilon\nabla \ueps$ bounded in $L^p((0,T)\times \Omega)$, there exists $u_0 \in L^p((0,T)\times \Omega, W^{1,p}_{\per}(Y))$ such that up to a subsequence it holds that
\begin{align*}
    \ueps &\rightarrow u_0 &\mbox{ in the two-scale sense in }& L^p,
    \\
    \epsilon \nabla \ueps &\rightarrow \nabla_y u_0 &\mbox{ in the two-scale sense in }& L^p.
\end{align*}
\item For every sequence $\ueps \in L^p((0,T)\times \geps)$ with
\begin{align*}
\epsilon^{\frac{1}{p}}\Vert \ueps\Vert_{L^p((0,T)\times \geps)} \le C,
\end{align*}
there exists $u_0 \in L^p((0,T)\times \Omega \times \Gamma)$ such that up to a subsequence
\begin{align*}
\ueps \rightarrow u_0 \quad \mbox{ in the two-scale sense on } \geps \mbox{ in } L^p.
\end{align*}
\end{enumerate}

\end{lemma}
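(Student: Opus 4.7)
I plan to prove this lemma by first establishing (i) as the core two-scale compactness result, and then deducing (ii) and (iii) from it via integration-by-parts tricks with oscillating test functions, while (iv) is handled by the same abstract duality argument as (i) but with surface measures. The time variable $t \in (0,T)$ plays the role of a passive parameter throughout; all that is needed is measurability in $t$, so the arguments reduce to the classical stationary two-scale results from \cite{Allaire_TwoScaleKonvergenz, LukkassenNguetsengWallTSKonvergenz, Neuss_TwoScaleBoundary}. The central technical fact I will invoke repeatedly is the admissibility estimate
\begin{equation*}
\Bigl\| \phi\bigl(t,x,\tfrac{x}{\epsilon}\bigr) \Bigr\|_{L^{p'}((0,T)\times\Omega)} \;\le\; C\,\|\phi\|_{L^{p'}((0,T)\times\Omega, C_{\per}(Y))},
\end{equation*}
together with the convergence of this norm to $\|\phi\|_{L^{p'}((0,T)\times\Omega\times Y)}$ as $\epsilon\to 0$, which follows from the Riemann-Lebesgue-type argument on continuous periodic functions combined with density.

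For part (i), I would define the sequence of linear functionals $T_\epsilon : L^{p'}((0,T)\times\Omega, C_{\per}(Y)) \to \R$ by $T_\epsilon(\phi) := \int_0^T \int_\Omega u_\epsilon(t,x)\phi(t,x,x/\epsilon)\, dx\, dt$. The admissibility estimate gives $|T_\epsilon(\phi)| \le C \|\phi\|_{L^{p'}((0,T)\times\Omega,C_{\per}(Y))}$ uniformly in $\epsilon$. Since the space of test functions is separable, a diagonal/Banach--Alaoglu extraction yields a subsequence and a limit functional $T$ which, by the norm convergence mentioned above, extends continuously to $L^{p'}((0,T)\times\Omega\times Y)$. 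The Riesz representation theorem then provides $u_0 \in L^p((0,T)\times\Omega\times Y)$ with $T(\phi) = \int_0^T\int_\Omega\int_Y u_0 \phi\, dy\, dx\, dt$, which is exactly the two-scale convergence.

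For parts (ii) and (iii), I would first apply (i) to $u_\epsilon$ (and, in (ii), to each component of $\nabla u_\epsilon$; in (iii), to $\epsilon \nabla u_\epsilon$) to obtain limits $u_0$ and some vector field $\xi_0$. To identify the structure of these limits I would test against oscillating functions of the form $\phi_\epsilon(t,x) = \epsilon\, \psi(t,x)\,\eta(x/\epsilon)$ with $\psi \in C_0^\infty((0,T)\times\Omega)$ and $\eta \in C_{\per}^\infty(Y)^n$; integration by parts on $\int \nabla u_\epsilon \cdot \phi_\epsilon$ converts the problem into comparing two two-scale limits. In case (ii) this forces $\int_Y u_0\, \nabla_y \cdot \eta\, dy = 0$ for all $\eta$, hence $u_0$ is independent of $y$, and then testing against $\eta$ with prescribed divergence identifies $\xi_0 - \nabla_x u_0$ as a periodic $y$-gradient $\nabla_y u_1$ (up to an additive constant in $y$, fixed by the $\R$-quotient). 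In case (iii) a similar identity but with the $\epsilon$-factor on $\nabla$ already absorbed forces $\xi_0 = \nabla_y u_0$ directly.

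For part (iv), the scheme of (i) transports verbatim once the admissibility estimate is replaced by its surface analogue
\begin{equation*}
\epsilon^{1/p}\Bigl\| \phi\bigl(t,x,\tfrac{x}{\epsilon}\bigr) \Bigr\|_{L^{p'}((0,T)\times \Gamma_\epsilon)} \;\le\; C\,\|\phi\|_{L^{p'}((0,T)\times\Omega, C_{\per}(\Gamma))},
\end{equation*}
which reflects that $\epsilon|\Gamma_\epsilon| = O(1)$ uniformly in $\epsilon$; the convergence of the left-hand side to $\|\phi\|_{L^{p'}((0,T)\times\Omega\times\Gamma)}$ is again proved first for continuous $\phi$ by splitting $\Omega$ into the $\epsilon$-cells and using standard quadrature-type arguments on each scaled copy of $\Gamma$, then extended by density. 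The main (really, only) obstacle is establishing these admissibility/norm-convergence estimates rigorously for merely $L^{p'}$-in-$(t,x)$ and only continuous-in-$y$ test functions; the rest is soft functional analysis. Once this is in place, the Banach--Alaoglu and Riesz representation steps reproduce the surface two-scale limit $u_0 \in L^p((0,T)\times\Omega\times\Gamma)$ verbatim.
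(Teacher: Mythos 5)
The paper states this lemma without proof, deferring to the cited references (Allaire, Lukkassen--Nguetseng--Wall, Neuss-Radu); your sketch is precisely the standard duality argument from those sources, so in that sense you follow the same route. The core chain you describe --- uniform boundedness of the linear functionals $T_\epsilon$ on the separable test space $L^{p'}((0,T)\times\Omega,C_{\per}(Y))$, a sequential Banach--Alaoglu extraction, the norm-convergence of oscillating test functions (admissibility), continuous extension of the limit functional to $L^{p'}((0,T)\times\Omega\times Y)$ by density, and Riesz representation --- is exactly the classical mechanism, and your identification of the limits in (ii)--(iii) via integration by parts against $\epsilon\,\psi(t,x)\eta(x/\epsilon)$ and then against divergence-free $\eta$ is also the standard device. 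One small slip: in the surface admissibility estimate for (iv), the prefactor on the test-function norm should be $\epsilon^{1/p'}$, not $\epsilon^{1/p}$, since H\"older pairs $\epsilon^{1/p}\|u_\epsilon\|_{L^p(\Gamma_\epsilon)}$ with $\epsilon^{1/p'}\|\phi(\cdot,\cdot,\cdot/\epsilon)\|_{L^{p'}(\Gamma_\epsilon)}$ to reproduce the $\epsilon$-weight in Definition \ref{DefinitionTSKonvergenzBoundary}; the rest of (iv) is fine. You might also note explicitly that the extension step in (i) uses density of $L^{p'}((0,T)\times\Omega,C_{\per}(Y))$ in $L^{p'}((0,T)\times\Omega\times Y)$, which holds because simple functions with continuous $y$-profiles are dense, but this is a routine Bochner-space fact.
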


Concerning the time derivative we have the following result:
\begin{lemma}
\label{TwoScaleTimeDerivative}
Let $\weps \in L^2((0,T),H^1(\oe))\cap H^1((0,T),H^1(\oe)')$ with 
\begin{align*}
    \Vert \partial_t\weps \Vert_{L^2((0,T),H^1(\oe)')} + \Vert \weps \Vert_{L^2((0,T)\times \oe)} + \epsilon \Vert \nabla\weps\Vert_{L^2((0,T)\times \oe)} \le C.
\end{align*}
Denote by $\tweps $ the zero extension to the whole domain $\Omega$. Let $w_0 \in L^2((0,T)\times \Omega, H_{\per}^1(Y))$ denote the two-scale limit of $\tweps$ (up to a subsequence) from  Lemma \ref{BasicTwoScaleCompactness} (vanishing on $Y\setminus Y^{\ast}$) and define $\bar{w}_0:= \int_{Y} w dy$. Then, again up to a subsequence, it holds that
\begin{align*}
    \partial_t \tweps \rightarrow \partial_t \bar{w}_0 \quad \mbox{ weakly in } L^2((0,T), H^1(\Omega)').
\end{align*}
If additionally $\tweps(0)$ converges in the two-scale sense to $w^0 \in L^2(\Omega \times Y)$, then we have
\begin{align}
    \bar{w}_0(0) = \int_Y w^0 dy.
\end{align}
\end{lemma}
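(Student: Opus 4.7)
The strategy has three components: establish a uniform bound of $\partial_t \tweps$ in $L^2((0,T), H^1(\Omega)')$, extract a weak subsequential limit and identify it with $\partial_t \bar{w}_0$ via separated test functions, and then recover the initial condition through integration by parts in time.

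For any $\phi \in L^2((0,T), H^1(\Omega))$, the restriction satisfies $\|\phi|_{\oe}\|_{L^2((0,T), H^1(\oe))} \le \|\phi\|_{L^2((0,T), H^1(\Omega))}$. Because $\tweps$ is the zero-extension of $\weps$, its distributional time derivative in $(0,T) \times \Omega$ acts through
\begin{align*}
    \int_0^T \langle \partial_t \tweps, \phi \rangle_{H^1(\Omega)', H^1(\Omega)} \, dt = \int_0^T \langle \partial_t \weps, \phi|_{\oe} \rangle_{H^1(\oe)', H^1(\oe)} \, dt ,
\end{align*}
and the hypothesis yields $\|\partial_t \tweps\|_{L^2((0,T), H^1(\Omega)')} \le C$. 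By reflexivity, a subsequence exists along which $\partial_t \tweps \rightharpoonup \xi$ weakly in $L^2((0,T), H^1(\Omega)')$.

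To identify $\xi$, I would test against separated variables. Take $\varphi \in C_c^\infty((0,T))$ and $\psi \in H^1(\Omega)$; the product $(t,x) \mapsto \varphi(t)\psi(x)$ is $y$-independent, hence two-scale admissible. Integration by parts in time gives
\begin{align*}
    \int_0^T \langle \partial_t \tweps, \varphi(t) \psi \rangle \, dt = - \int_0^T \varphi'(t) \int_\Omega \tweps \, \psi \, dx\, dt .
\end{align*}
The two-scale convergence $\tweps \to w_0$ lets the right-hand side tend to $-\int_0^T \varphi'(t) \int_\Omega \bar{w}_0 \psi\, dx\, dt$; the left-hand side tends to $\int_0^T \langle \xi, \varphi(t)\psi\rangle\, dt$. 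Varying $\varphi$ and $\psi$ and using the density of $\{\varphi \otimes \psi\}$ in $L^2((0,T), H^1(\Omega))$ identifies $\xi = \partial_t \bar{w}_0$ in the distributional sense, which in particular shows that $\bar{w}_0$ admits a weak time derivative in $L^2((0,T), H^1(\Omega)')$.

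For the initial condition, note that $\tweps \in L^2((0,T), L^2(\Omega)) \cap H^1((0,T), H^1(\Omega)')$ gives $\tweps \in C([0,T], H^1(\Omega)')$, and the hypothesis $\tweps(0) \to w^0$ in the two-scale sense ensures that $\tweps(0) \in L^2(\Omega)$ uniformly. Choose $\varphi \in C^1([0,T])$ with $\varphi(T) = 0$ and $\varphi(0) = 1$, and $\psi \in H^1(\Omega)$. The standard integration-by-parts identity
\begin{align*}
    \int_0^T \langle \partial_t \tweps, \varphi(t)\psi \rangle\, dt = - \int_\Omega \tweps(0) \, \psi \, dx - \int_0^T \varphi'(t) \int_\Omega \tweps \, \psi \, dx\, dt
\end{align*}
passes to the limit: the left-hand side tends to $-\langle \bar{w}_0(0), \psi\rangle - \int_0^T \varphi'(t)\int_\Omega \bar{w}_0 \psi\, dx\, dt$ via the previous step (now using the integration-by-parts for $\bar{w}_0$, justified by the regularity just obtained), the first term on the right converges to $-\int_\Omega \bar{w}^0 \psi\, dx$ with $\bar{w}^0 := \int_Y w^0\, dy$ by the two-scale convergence of the initial data, and the last term converges as in the previous step. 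Comparing, $\langle \bar{w}_0(0), \psi\rangle = \langle \bar{w}^0, \psi\rangle$ for every $\psi \in H^1(\Omega)$, hence $\bar{w}_0(0) = \int_Y w^0\, dy$.

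The main subtlety is that $\bar{w}_0$ is not a priori known to possess a weak time derivative valued in $H^1(\Omega)'$; this regularity emerges only after identifying $\xi$, and is what makes the integration-by-parts for $\bar{w}_0$ meaningful in the final step. Beyond that, the only technical point is that zero extension in space commutes with the distributional time derivative, which together with the admissibility of $y$-independent test functions in the two-scale sense carries the argument through.
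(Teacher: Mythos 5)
The paper itself only says ``This follows by standard two-scale arguments and integration by parts in time, so we skip the details,'' so there is no detailed paper proof to compare against. Your argument is exactly what that sentence is gesturing at, and it is essentially correct: the key observations — that restriction $H^1(\Omega)\to H^1(\oe)$ is norm-nonincreasing so the zero extension inherits $\partial_t\tweps\in L^2((0,T),H^1(\Omega)')$; that $y$-independent test functions $\varphi\otimes\psi$ are admissible and identify $\xi=\partial_t\bar w_0$, which in particular establishes the regularity of $\bar w_0$ needed to make sense of $\bar w_0(0)$; and that an integration by parts with $\varphi(0)=1,\varphi(T)=0$ transports the two-scale limit of the initial data — are the right ones and are handled in the right order. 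Two small remarks: the pairing $\langle\bar w_0(0),\psi\rangle$ should a priori be read in $H^1(\Omega)'$–$H^1(\Omega)$ duality, since $\bar w_0\in L^2((0,T),L^2(\Omega))$ (not $L^2((0,T),H^1(\Omega))$) with $\partial_t\bar w_0\in L^2((0,T),H^1(\Omega)')$ only gives $\bar w_0\in C^0([0,T],H^1(\Omega)')$; the identification $\bar w_0(0)=\int_Y w^0\,dy$ in $L^2(\Omega)$ then follows because the right-hand side is an $L^2$ function and the two functionals agree on the dense set $H^1(\Omega)$. Also, the density you invoke is not strictly needed — identifying $\xi$ against all $\varphi\otimes\psi$ with $\varphi\in C_c^\infty((0,T))$, $\psi\in H^1(\Omega)$ already pins down the weak time derivative — but this does not affect correctness.
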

\begin{proof}
This follows by standard two-scale arguments and integration by parts in time, so we skip the details.
\end{proof}

\subsection{The unfolding operator}

When dealing with nonlinear problems it is helpful to work the unfolding method which gives a characterization for the weak and strong convergence, see Lemma \ref{LemmaAequivalenzTSKonvergenzUnfolding} below. For a detailed investigation of the unfolding operator and its properties we refer to \cite{CioranescuGrisoDamlamian2018}. For a perforated domain (here we also allow the case $Y^{\ast} = Y$, \ie $\oe = \Omega$) we define the unfolding operator for $p \in [1,\infty]$ by
\begin{align*}
\teps : L^p((0,T)\times \oe) \rightarrow L^p((0,T)\times \Omega \times Y^{\ast}),\quad \teps(\peps)(t,x,y)= \peps\left(t,\epsilon \left[\fxe\right] + \epsilon y\right).
\end{align*}
In the same way, we define the boundary unfolding operator for the oscillating surface $\geps$ via
\begin{align*}
\teps : L^p((0,T)\times \geps) \rightarrow L^p((0,T)\times \Omega \times \Gamma),\quad \teps(\peps)(t,x,y)= \peps\left(t,\epsilon \left[\fxe\right] + \epsilon y\right).
\end{align*}
We emphasize that we use the same notation for the unfolding operator on $\oe$ and the boundary unfolding operator $\geps$. We summarize the basic properties of the unfolding operator, see \cite{CioranescuGrisoDamlamian2018}:

\begin{lemma}\label{LemmaPropertiesUnfoldingOperator}
Let $p \in [1,\infty]$.
\begin{enumerate}[label = (\roman*)]
\item   For $\peps \in L^p((0,T)\times \oe)$ it holds that
\begin{align*}
\| \teps (\peps) \|_{L^p((0,T)\times \Omega \times \oe)} = \|\peps\|_{L^p((0,T)\times Y^{\ast})}.
\end{align*}
\item For $\peps \in L^p((0,T),W^{1,p}(\oe))$ it holds that
\begin{align*}
\nabla_y \teps (\peps )= \epsilon \teps( \nabla_x \peps).
\end{align*} 
\item  For $\peps \in L^p((0,T)\times \geps)$ it holds that
\begin{align*}
\|\teps \peps \|_{L^p((0,T)\times \Omega \times \Gamma)} = \epsilon^{\frac{1}{p}} \|\peps\|_{L^p((0,T)\times \geps)}.
\end{align*}
\end{enumerate}
\end{lemma}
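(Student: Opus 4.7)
The approach is to prove each part by a direct change of variables argument that exploits the fact that $\epsilon^{-1} \in \N$, so that $\overline{\Omega} = \bigcup_{{\bf k}\in K_\epsilon}\epsilon(\overline{Y}+{\bf k})$ is an exact decomposition into microscopic cells with no partial cells at the boundary. On each such cell, the ``macroscopic'' part $\epsilon[x/\epsilon]$ in the definition of $\teps$ is constant and equal to $\epsilon{\bf k}$; this is the key observation that makes all three identities exact rather than approximate.

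For part (i), I would first use Fubini and the cell decomposition to write
\[
\|\teps(\peps)\|_{L^p((0,T)\times\Omega\times Y^\ast)}^p = \sum_{{\bf k}\in K_\epsilon}\int_0^T\!\!\int_{\epsilon(Y+{\bf k})}\!\!\int_{Y^\ast}\bigl|\peps(t,\epsilon{\bf k}+\epsilon y)\bigr|^p\,dy\,dx\,dt.
\]
Since the integrand does not depend on $x$, the $x$-integration contributes a factor $\epsilon^n = |\epsilon(Y+{\bf k})|$. Then the substitution $z = \epsilon({\bf k}+y)$ in the inner integral yields $dz = \epsilon^n dy$, which cancels the $\epsilon^n$ and converts $\int_{Y^\ast}$ into $\int_{\epsilon(Y^\ast+{\bf k})}$. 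Summing over ${\bf k}\in K_\epsilon$ reassembles the $L^p$-norm of $\peps$ on $\oe$. (I note that as stated the lemma has a slight typo---the codomain should read $L^p((0,T)\times\Omega\times Y^\ast)$ rather than $\times\oe$---but the identity to prove is clearly the one above.)

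For part (ii), this is simply the chain rule: writing $\teps(\peps)(t,x,y) = \peps(t,\epsilon[x/\epsilon]+\epsilon y)$ and differentiating in $y$ with $x$ fixed, the argument $\epsilon[x/\epsilon]$ is independent of $y$ and the factor $\epsilon$ is produced by differentiating $\epsilon y$. Hence $\nabla_y\teps(\peps)(t,x,y) = \epsilon\,(\nabla\peps)(t,\epsilon[x/\epsilon]+\epsilon y) = \epsilon\,\teps(\nabla_x\peps)(t,x,y)$, valid for $\peps\in L^p((0,T),W^{1,p}(\oe))$ by the usual density/approximation argument.

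Part (iii) is proved by the same strategy as (i), with the substitution performed on the sphere. Again splitting the $x$-integral by the cell decomposition yields the factor $\epsilon^n$ from the volume of $\epsilon(Y+{\bf k})$, while the change of variables $z = \epsilon({\bf k}+y)$ restricted to the surface maps $\Gamma$ bijectively to $\epsilon\Gamma+\epsilon{\bf k}$ with Jacobian $\epsilon^{n-1}$ (surface measure scales one dimension less than volume). Thus
\[
\|\teps(\peps)\|_{L^p((0,T)\times\Omega\times\Gamma)}^p = \sum_{{\bf k}\in K_\epsilon}\epsilon^n\cdot\epsilon^{-(n-1)}\int_0^T\!\!\int_{\epsilon(\Gamma+{\bf k})}|\peps(t,z)|^p\,d\sigma_z\,dt = \epsilon\,\|\peps\|_{L^p((0,T)\times\geps)}^p,
\]
which gives the claimed $\epsilon^{1/p}$ factor after taking the $p$-th root. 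There is no genuine obstacle here; the only thing to watch is the bookkeeping of scaling exponents between bulk and surface measure, which is what distinguishes (i) from (iii). Everything else is completely standard unfolding-operator algebra and the results extend to $p=\infty$ by an essential-supremum version of the same argument.
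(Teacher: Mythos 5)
The paper does not prove this lemma; it states it as a citation to Cioranescu, Damlamian, and Griso's book on unfolding, so there is no in-paper proof to compare against. Your change-of-variables computation is the correct and standard argument underlying those references, and it correctly exploits the exact tiling $\overline{\Omega}=\bigcup_{{\bf k}\in K_\epsilon}\epsilon(\overline{Y}+{\bf k})$ guaranteed by $\epsilon^{-1}\in\N$, which is exactly what makes the identities exact rather than up to boundary-cell error. One small remark: the misprint in part (i) is actually double-sided, since not only should the left-hand side read $L^p((0,T)\times\Omega\times Y^\ast)$ in place of $L^p((0,T)\times\Omega\times\oe)$, but the right-hand side should read $L^p((0,T)\times\oe)$ in place of $L^p((0,T)\times Y^\ast)$; your computation in fact establishes precisely the corrected identity, you just flagged only one of the two swapped factors.
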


The following lemma gives a relation between the unfolding operator and the two-scale converges. Its proof is quite standard and we refer the reader to \cite{BourgeatLuckhausMikelic} and \cite{CioranescuGrisoDamlamian2018} for more details (see also \cite[Proposition 2.5]{visintin2006towards}).
\begin{lemma}\label{LemmaAequivalenzTSKonvergenzUnfolding}
Let $p \in [1,\infty)$.
\begin{enumerate}
[label = (\alph*)]
\item For   a sequence $\ueps \in L^p((0,T)\times \Omega)$, the following statements are equivalent:
\begin{enumerate}[label = (\roman*)]
\item $\ueps \rightarrow u_0$ weakly/strongly in the two-scale sense in $L^p$,
\item $\teps \ueps \rightarrow u_0$ weakly/strongly in $L^p((0,T)\times \Omega \times Y)$.
\end{enumerate}
\item  For a sequence $\ueps \in L^p((0,T)\times \geps)$  the following statements are equivalent:
\begin{enumerate}[label = (\roman*)]
\item $\ueps \rightarrow u_0$ weakly/strongly in the two-scale sense on $\geps$ in $L^p$,
\item $\teps \ueps \rightarrow u_0$ weakly/strongly in $L^p((0,T)\times \Omega \times \Gamma)$.
\end{enumerate}
\end{enumerate}
\end{lemma}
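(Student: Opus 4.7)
My plan is to reduce both parts (a) and (b) to the fundamental \emph{unfolding identity}
\[
\int_0^T\!\!\int_{\widehat{\Omega}_\epsilon}\ueps(t,x)\,\phi\!\left(t,x,\tfrac{x}{\epsilon}\right)dx\,dt
=\int_0^T\!\!\int_{\widehat{\Omega}_\epsilon}\!\!\int_Y \teps(\ueps)(t,x,y)\,\teps(\phi)(t,x,y)\,dy\,dx\,dt,
\]
where $\widehat{\Omega}_\epsilon:=\mathrm{int}\bigcup_{\mathbf{k}\in K_\epsilon}\epsilon(\overline{Y}+\mathbf{k})$ is the union of full $\epsilon$-cells, together with the analogous identity on the oscillating surface, obtained from a straightforward change of variables $z=\epsilon[x/\epsilon]+\epsilon y$ on each microcell. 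The part over $\Omega\setminus\widehat{\Omega}_\epsilon$ is controlled by Hölder's inequality because $|\Omega\setminus\widehat{\Omega}_\epsilon|\to 0$, provided $\ueps$ is uniformly bounded in $L^p$ (which is automatic for the sequences under consideration, since any weakly/two-scale convergent sequence is bounded).

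An auxiliary step, to be used in both directions, is the \emph{strong} convergence $\teps(\phi)\to \phi$ in $L^{p'}((0,T)\times\Omega\times Y)$ for every admissible test function $\phi\in L^{p'}((0,T)\times\Omega, C_{\per}(Y))$ (resp.\ $\phi\in C([0,T]\times\overline{\Omega},C_{\per}(\Gamma))$ for part (b)). I would prove this first for $\phi\in C([0,T]\times\overline{\Omega},C_{\per}(Y))$ by uniform continuity on compacts, then extend by density using the isometry $\|\teps(\phi)\|_{L^{p'}((0,T)\times\widehat{\Omega}_\epsilon\times Y)}\le \|\phi\|_{L^{p'}((0,T)\times\Omega, C_{\per}(Y))}$.

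For the weak equivalence in (a), the direction (ii)$\Rightarrow$(i) then follows by testing with $\phi(t,x,x/\epsilon)$: the right-hand side of the unfolding identity splits into a strongly convergent $\teps(\phi)$ and a weakly convergent $\teps(\ueps)$, so the product integral converges to $\int\!\!\int\!\!\int u_0\phi\,dy\,dx\,dt$. The reverse direction (i)$\Rightarrow$(ii) uses that any $\psi\in L^{p'}((0,T)\times\Omega\times Y)$ can be approximated by functions of the form $\phi(t,x,y)$ above, combined with uniform $L^p$-boundedness of $\teps(\ueps)$ (again from the isometry applied to $|\ueps|^p$). For the strong equivalence, the key observation is the norm preservation
\[
\|\teps(\ueps)\|_{L^p((0,T)\times\widehat{\Omega}_\epsilon\times Y)}=\|\ueps\|_{L^p((0,T)\times\widehat{\Omega}_\epsilon)},
\]
so strong two-scale convergence (which adds $\|\ueps\|_{L^p}\to\|u_0\|_{L^p(...\times Y)}$ to the weak notion) is equivalent to weak convergence of $\teps(\ueps)$ together with $\|\teps(\ueps)\|_{L^p}\to\|u_0\|_{L^p}$, i.e.\ strong $L^p$-convergence of $\teps(\ueps)$.

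Part (b) proceeds along the same lines using the boundary isometry from Lemma \ref{LemmaPropertiesUnfoldingOperator}(iii) and the unfolding identity on $\geps$, namely
\[
\epsilon\int_0^T\!\!\int_{\geps\cap\widehat{\Omega}_\epsilon}\ueps\,\phi\!\left(t,x,\tfrac{x}{\epsilon}\right)d\sigma\,dt
=\int_0^T\!\!\int_{\widehat{\Omega}_\epsilon}\!\!\int_\Gamma \teps(\ueps)\,\teps(\phi)\,d\sigma_y\,dx\,dt,
\]
with the piece over $\geps\setminus\widehat{\Omega}_\epsilon$ negligible since $\epsilon|\geps\setminus\widehat{\Omega}_\epsilon|\to 0$. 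The main technical obstacle is the careful treatment of this boundary-layer remainder in the unfolding identity, which is standard but must be handled with Hölder's inequality and the uniform $L^p$- (respectively scaled $\epsilon^{1/p}L^p$-) bound on the sequences; once controlled, the rest of the argument is a direct transcription of the scheme used for part (a).
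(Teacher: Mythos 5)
The paper itself does not prove this lemma; it refers to the standard references (Bourgeat--Luckhaus--Mikeli\'c, Cioranescu--Griso--Damlamian, Visintin), whose argument is precisely the one you outline: the unfolding identity, density of smooth test functions, strong $L^{p'}$-convergence of $\teps(\phi)$, and the $L^p$-isometry of $\teps$. Your proposal reproduces this route faithfully. One simplification you could exploit here: by $\eqref{eq:K}$, $\overline\Omega=\bigcup_{\mathbf k\in K_\epsilon}\epsilon(\overline Y+\mathbf k)$ \emph{exactly} (since $\Omega=(0,\Bell)$ with integer side-lengths and $\epsilon^{-1}\in\N$), so $\widehat\Omega_\epsilon=\Omega$ up to a null set, the unfolding operator is an exact isometry, and the H\"older estimate over $\Omega\setminus\widehat\Omega_\epsilon$ that you take pains to control is void in this geometry; it is correct, and needed in general, but unnecessary here.

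There is, however, a genuine gap at $p=1$. In the strong-convergence half of part (a) you pass from \lq\lq weak convergence of $\teps(\ueps)$ together with $\|\teps(\ueps)\|_{L^p}\to\|u_0\|_{L^p}$\rq\rq\ to strong $L^p$-convergence of $\teps(\ueps)$. This \lq\lq i.e.\rq\rq\ invokes the Radon--Riesz property, which holds for $1<p<\infty$ by uniform convexity but fails in $L^1$: the sequence $f_n(x)=1+\sin(nx)$ on $(0,2\pi)$ converges weakly to $1$ in $L^1$ with $\|f_n\|_{L^1}=\|1\|_{L^1}$, yet $\|f_n-1\|_{L^1}=4$ for all $n$. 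Since the lemma is stated for $p\in[1,\infty)$, your argument establishes the strong equivalence only for $p>1$. To cover $p=1$ one needs an extra ingredient (e.g.\ equi-integrability and Vitali's theorem), or one can follow Visintin's convention of \emph{defining} strong two-scale convergence directly as strong $L^p$-convergence of the unfolded sequence, which removes the Radon--Riesz step altogether. This subtlety is also latent in the paper's own Definition~\ref{DefinitionTSConvergence}, so it is not a flaw you introduced, but a careful write-up should acknowledge it.
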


\end{appendix}

\bibliographystyle{abbrv}
\bibliography{literature}

\end{document}